\numberwithin{equation}{section}
\theoremstyle{plain}
\newtheorem{theorem}{Theorem}[section]
\newtheorem{corollary}[theorem]{Corollary}
\newtheorem{lemma}[theorem]{Lemma}
\newtheorem{proposition}[theorem]{Proposition}
\newtheorem{notation}[theorem]{Notation}
\newtheorem{definition}[theorem]{Definition}
\newtheorem{remark}[theorem]{Remark}
\newtheorem{example}[theorem]{Example}
\newcommand{\be}{\mathbb E}
\newcommand{\bn}{\mathbb N}
\newcommand{\Nk}{\bn_0^k}
\newcommand{\ot}{\otimes}
\newcommand {\id} {{\textrm{id}}}
\newcommand{\wt}{\widetilde}
\newcommand{\wT}{\wt{T}}
\newcommand{\onek}{\{1,\ldots,k\}}
\begin{document}

\title[]{Doubly commuting invariant subspaces for representations of product systems of $C^*$-correspondences}

\date{\today}


\author[Trivedi]{Harsh Trivedi\textsuperscript{*}}

\address{The LNM Institute of Information Technology, Rupa ki Nangal, Post-Sumel, Via-Jamdoli
	Jaipur-302031,
	(Rajasthan) INDIA}
\email{trivediharsh26@gmail.com, harsh.trivedi@lnmiit.ac.in}

\author[Veerabathiran]{Shankar Veerabathiran}
\address{Ramanujan Institute for Advanced Study in Mathematics,
	University of Madras, Chennai (Madras) 600005, India}
\email{shankarunom@gmail.com}
\thanks{*corresponding author}






\begin{abstract}
	We obtain a Shimorin-Wold-type decomposition for a doubly commuting covariant representation of a product system of $C^*$-correspondences. This extends a recent Wold-type decomposition by Jeu and Pinto \cite{MP} for a $q$-doubly commuting isometries. Application to the wandering subspaces of doubly commuting induced representations is explored, and a version of Mandrekar's Beurling type theorem is obtained to study doubly commuting invariant subspaces using Fock space approach due to Popescu.
\end{abstract}

\keywords{Hilbert $C^*$-modules, isometry, covariant representations,
	product systems, q-commuting, tuples of operators, doubly commuting, Nica covariance,  Shimorin property,
	wandering subspaces, Wold decomposition, Fock space}
\subjclass[2010]{46L08, 47A13, 47A15, 47B38, 47L30, 47L55, 47L80.}
\maketitle
\section{Introduction}

The theorem of Wold \cite{W}, known as Wold decomposition, says
that every isometry on a Hilbert space decomposes uniquely as a direct sum of a shift operator and a unitary operator. A classical  application of the Wold decomposition is Beurling's theorem \cite{B49} which states that if $\mathcal K$ is a closed invariant subspace for the shift $M_z$ of the Hardy space $H^2(\mathbb D),$ then $\mathcal K$ is the image of an inner function. A wandering subspace theorem due to Halmos \cite{H61} is a generalization of the Beurling's theorem. In \cite{R69}, Rudin explained that the Beurling theorem fails in general in the multivariable case, that is, for commuting tuple of isometries. S{\l}oci{\'n}ski
in \cite{Sl80} proved a Wold-type decomposition for doubly commuting isometries, which is based on a characterization of the existence of a wandering subspace for commuting tuple of isometries in terms of the doubly commuting condition. Mandrekar \cite{M88} utilized the S{\l}oci{\'n}ski's decomposition and gave a  Beurling's type theorem for the Hardy space over the bidisc $H^2(\mathbb D^2)$ under the assumption that the tuple of multiplication operators by co-ordinate functions $(M_{z_1}, M_{z_2})$ is doubly commuting. Sarkar, Sasane and Wick \cite{S13} developed the Mandrekar's result, in the polydisc case $H^2(\mathbb D^n),$ based on a multivariable version of the S{\l}oci{\'n}ski's result given in \cite{S14}. There is a recent book \cite{MR18} on this theme by Mandrekar and Redett.

Pimsner in \cite{P97} extended the notion of Cuntz-Kreiger algebras using the terminology of isometric covariant representations of $C^*$-correspondences. The $C^*$-representations of tensor algebras of a $C^*$-correspondence are in a bijective correspondence with the isometric covariant representations of the $C^*$-correspondence (cf. \cite{P97}).
In \cite{MS99}, Muhly and Solel derived an analogue of the Wold decomposition for the isometric covariant representations which is a generalization of the Wold decomposition for a row
isometry by Popescu \cite{Po89}. Arveson explored the notion of tensor product system of Hilbert spaces in \cite{A89} to classify $E_0$-semigroups. Solel in \cite{S08} introduced the notion of doubly commuting covariant representations of product systems of $C^*$-correspondences and explored theory of regular dilations.
Skalski and Zacharias generalized S{\l}oci{\'n}ski's Wold-type decomposition for the doubly commuting isometric covariant representations.

 Shimorin \cite{S01} generalized the Wold decomposition for an operator close to isometry. A version of Shimorin-Wold-type decomposition for covariant representations close to isometric is proved in \cite[Theorem 3.13]{HS19}. We obtain an analogue of this result for the doubly commuting covariant representations in Section \ref{SecW}, which provides a different proof of the result of Skalski and Zacharias, based on the techniques developed in \cite{S14,MP}. In the setting of $C^*$-correspondences induced representations, introduced by Rieffel \cite{R74}, plays the role of a shift.  In this section the wandering subspace theorem for commuting shift \cite[Theorem 3.3]{S14} is extended for a covariant  representation of a product system. Final section is devoted to the Beurling-Lax type characterization of the doubly commuting invariant subspaces.

\subsection{Preliminaries: Shimorin Wold-type decomposition of single covariant representation}

 In this subsection we recall few definitions and elementary properties of covariant representations of $C^*$-correspondences from (see
\cite{La95, MR1648483, MR0355613, P97}).

Let $E$ be a Hilbert
$C^*$-module over a $C^*$-algebra $\mathcal M.$ By $\mathcal L(E)$ we denote the $C^*$-algebra of all
adjointable operators on $E$. We say that the  module $E$ is a {\it
$C^*$-correspondence over $\mathcal M$} if there exists a left $\mathcal
M$-module structure through a non-zero $*$-homomorphism
$\phi:\mathcal M\to \mathcal L(E)$ in the following sense
\[
a\xi:=\phi(a)\xi \quad \quad (a\in\mathcal M, \xi\in E).
\]
Each $*$-homomorphism considered in this article is
essential, that means, the closed linear span of
$\phi(\mathcal M)E$ is $E.$ Every $C^*$-correspondence has usual operator space structure induced from viewing it as a corner in respective linking algebra. If $F$ is another
$C^*$-correspondence over $\mathcal M,$ then we may consider the notion of
tensor product $F\otimes_{\phi} E$ (cf. \cite{La95}) which satisfy
\[
(\zeta_1 a)\otimes \xi_1=\zeta_1\otimes \phi(a)\xi_1,
\]
\[
\langle\zeta_1\otimes\xi_1,\zeta_2\otimes\xi_2\rangle=\langle\xi_1,\phi(\langle\zeta_1,\zeta_2\rangle)\xi_2\rangle
\]
for every $\zeta_1,\zeta_2\in F;$ $\xi_1,\xi_2\in E$ and $a\in\mathcal
M.$

\begin{definition}
Let $\mathcal H$ be a Hilbert
space, and $E$ be a $C^*$-correspondence over a $C^*$-algebra $\mathcal M$. Let
$\sigma:\mathcal M\to B(\mathcal H)$ be a representation and $T:
E\to B(\mathcal H)$ be a linear map. Then the pair $(\sigma,T)$ is
said to be a {\rm covariant representation} of $E$ on $\mathcal H$ if
\[
T(a\xi a')=\sigma(a)T(\xi)\sigma(a') \quad \quad (\xi\in E;
a,a'\in\mathcal M).
\]
We say that the covariant representation is {\rm completely bounded} (respectively, {\rm completely
contractive}) if $T$ is  completely bounded (respectively, completely contractive). Moreover, it is called {\rm isometric} if
\[
T(\xi)^*T(\zeta)=\sigma(\langle \xi,\zeta\rangle) \quad \quad
(\xi,\zeta\in E).
\]
\end{definition}
\begin{example}
 Let $\mathcal{K}$ be a finite or countably infinite dimensional  Hilbert space and $\sigma$ be a representation of the complex number $\mathbb{C}$ on $\mathcal{K}.$ Let $(\sigma, T)$ be a completely contractive covariant representation of $E(:=\mathcal{K})$ on a Hilbert space $\mathcal{H}.$ If $\{e_n\}_{n=1}^{m}(m:=\mbox{dim }\: \mathcal{K})$  is an orthonormal basis for $E,$ then  $T(\xi)=\sum\langle \xi, e_n \rangle T_n$ where $T_n:=T(e_n),  1\leq n \leq m.$ Moreover, since $(\sigma, T)$ is completely contractive, $\sum_{n=1}^{m} T_nT^*_n \leq I.$ Conversely, let $ m \in \mathbb{N}\cup \{ {\infty}\}$ and  $\{T_n\}_{n=1}^{m}$ be a sequence of operators on the Hilbert space $\mathcal{H}$ such that $\sum_{n=1}^{m} T_nT^*_n \leq I.$
 Define a completely contractive covariant representation $(\sigma, T)$ of $E$  on $\mathcal{H}$ by $T(e_n):=T_n$ and $\sigma(x):=xI_{\mathcal{H}},$
 where $E$ is  Hilbert space whose dimension is $m,$ with the orthonormal basis $\{e_n\}_{n=1}^{m}.$
 Therefore, there is a one-to-one correspondence between the completely contractive covariant representation of $E$ on Hilbert space $\mathcal{H}$ and the sequence $\{T_n\}_{n=1}^{m}(m:=\mbox{dim }\: E)$ of operators on  $\mathcal{H}$ satisfying $\sum_{n=1}^{m} T_nT^*_n \leq I.$ For more details and connection with the dilation theory of contractions and the Wold decomposition, see \cite{MR1648483,MS99}.
\end{example}

\begin{lemma}(\cite[Lemma
	3.5]{MR1648483})
The mapping $(\sigma, T)\mapsto \widetilde T$ gives a bijection
between the collection of all completely bounded (respectively, completely contractive), covariant
representations $(\sigma, T)$ of $E$ on $\mathcal H$ and the
collection of all bounded (respectively, contractive) linear maps
$\widetilde{T}:~\mbox{$E\otimes_{\sigma} \mathcal H\to \mathcal
H$}$
 which satisfies  $\widetilde{T}(\phi(a)\otimes I_{\mathcal
H})=\sigma(a)\widetilde{T}$, $a\in\mathcal M$. Moreover, the covariant representation $(\sigma, T)$ is isometric if and only if  $\widetilde
T$ is an isometry.
\end{lemma}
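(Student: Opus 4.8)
The plan is to define the map $\widetilde{T}$ explicitly on elementary tensors, verify the claimed intertwining relation, check that the two assignments are mutually inverse, and then treat the norm statements and the isometric case separately. Given a completely contractive covariant representation $(\sigma,T)$, I would set $\widetilde{T}(\xi\otimes h):=T(\xi)h$ for $\xi\in E$, $h\in\mathcal H$, and extend linearly. First I would check that this respects the balancing in $E\otimes_\sigma\mathcal H$: since $T(\xi a)=T(\xi)\sigma(a)$ by covariance, one has $\widetilde T(\xi a\otimes h)=T(\xi)\sigma(a)h=\widetilde T(\xi\otimes\sigma(a)h)$, so $\widetilde T$ is well defined on the balanced tensor product. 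The intertwining identity is immediate from the left covariance $T(a\xi)=\sigma(a)T(\xi)$: for $a\in\mathcal M$, $\widetilde T((\phi(a)\otimes I)(\xi\otimes h))=T(a\xi)h=\sigma(a)T(\xi)h=\sigma(a)\widetilde T(\xi\otimes h)$. Conversely, starting from a bounded $\widetilde T$ with $\widetilde T(\phi(a)\otimes I)=\sigma(a)\widetilde T$, I would define $T(\xi)h:=\widetilde T(\xi\otimes h)$ and recover the full covariance $T(a\xi a')=\sigma(a)T(\xi)\sigma(a')$ by combining the intertwining relation with the balancing $\xi a'\otimes h=\xi\otimes\sigma(a')h$; these assignments are visibly mutual inverses, giving the set-theoretic bijection.

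For the isometric assertion (the ``moreover''), I would simply compare Gram matrices. For $\xi,\zeta\in E$ and $h,k\in\mathcal H$, $\langle\widetilde T(\xi\otimes h),\widetilde T(\zeta\otimes k)\rangle=\langle h,T(\xi)^*T(\zeta)k\rangle$, while by definition of the inner product on $E\otimes_\sigma\mathcal H$ one has $\langle\xi\otimes h,\zeta\otimes k\rangle=\langle h,\sigma(\langle\xi,\zeta\rangle)k\rangle$. These agree for all $\xi,\zeta,h,k$ precisely when $T(\xi)^*T(\zeta)=\sigma(\langle\xi,\zeta\rangle)$, that is, exactly when $(\sigma,T)$ is isometric; by linearity this is equivalent to $\widetilde T$ being an isometry.

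The substantive point is the identification of norms, $\|\widetilde T\|=\|T\|_{\mathrm{cb}}$, so that complete contractivity corresponds to $\widetilde T$ being a contraction. One inequality is routine: assuming $\|\widetilde T\|\le c$, for a matrix $[\xi_{ij}]\in M_n(E)$ and $g=(g_j)\in\mathcal H^n$ I would estimate each row, $\|\sum_j T(\xi_{ij})g_j\|^2=\|\widetilde T(\sum_j\xi_{ij}\otimes g_j)\|^2\le c^2\langle g,\sigma_n(G_i)g\rangle$ with $G_i$ the Gram matrix of the $i$-th row, sum over $i$, and use that the representation $\sigma$ amplifies to a contraction $\sigma_n$ on $M_n(\mathcal M)$ to obtain $\|[T(\xi_{ij})]g\|^2\le c^2\|[\xi_{ij}]^*[\xi_{ij}]\|\,\|g\|^2$. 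This yields $\|T\|_{\mathrm{cb}}\le\|\widetilde T\|$ and, applied with $(\sigma,T)$ given, shows the recovered $T$ is completely contractive.

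The main obstacle is the reverse inequality $\|\widetilde T\|\le\|T\|_{\mathrm{cb}}$, needed even to know that $\widetilde T$ is bounded: here a crude vectorwise estimate fails, because $\|\sum_i\xi_i\otimes h_i\|^2=\langle h,\sigma_n(G)h\rangle$ (with $G=[\langle\xi_i,\xi_j\rangle]$) can be far smaller than $\|G\|\,\|h\|^2$. The plan is to renormalize using the right module action. Fixing $\eta=\sum_{i=1}^n\xi_i\otimes h_i$, I would pass to the unitization, set $G_\varepsilon=G+\varepsilon 1$, and put $g:=\sigma_n(G_\varepsilon^{1/2})h$ together with $\zeta_i:=\sum_j\xi_j(G_\varepsilon^{-1/2})_{ji}$, so that the row $(\zeta_1,\dots,\zeta_n)$ has Gram matrix $G_\varepsilon^{-1/2}GG_\varepsilon^{-1/2}=1-\varepsilon G_\varepsilon^{-1}\le 1$, while multiplicativity of $\sigma_n$ gives $\sum_i\zeta_i\otimes g_i=\sum_j\xi_j\otimes h_j=\eta$. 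Applying complete contractivity to the $1\times n$ row $(\zeta_i)$ (viewed inside $M_n(E)$) bounds the row operator $[T(\zeta_1),\dots,T(\zeta_n)]$ by $\|[\langle\zeta_i,\zeta_j\rangle]\|^{1/2}\le 1$, whence $\|\widetilde T\eta\|=\|\sum_i T(\zeta_i)g_i\|\le\|g\|=(\|\eta\|^2+\varepsilon\|h\|^2)^{1/2}$; letting $\varepsilon\to 0$ gives $\|\widetilde T\eta\|\le\|\eta\|$, and the general completely bounded case follows by scaling. The care needed to handle the nonunital algebra and the extension of $\sigma$ to the unitization is where the argument is most delicate, but once this factorization is in place the bijection and all norm statements follow.
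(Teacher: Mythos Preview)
The paper does not give its own proof of this lemma: it is simply quoted from Muhly--Solel \cite[Lemma~3.5]{MR1648483} and used as a black box, so there is no ``paper's proof'' to compare against. Your argument is the standard one (essentially the Muhly--Solel proof), and it is correct: the bijection and the isometric equivalence are exactly as you describe, and the factorization trick with $G_\varepsilon^{-1/2}$ is the right way to get $\|\widetilde T\|\le\|T\|_{\mathrm{cb}}$. The only point worth tightening is the nonunital issue you flag at the end: to make $\sum_i\zeta_i\otimes g_i=\eta$ hold on the nose you need the balancing to carry the unit of $\widetilde{\mathcal M}$ to the identity on both sides, which amounts to $\sigma$ being nondegenerate on $\mathcal H$; alternatively, one can avoid the unitization entirely by using an approximate identity $(e_\lambda)$ for $\mathcal M$ and replacing $G_\varepsilon$ by $G+\varepsilon e_\lambda^2$ (or by invoking the Kasparov stabilization/polar decomposition for the row $(\xi_1,\dots,\xi_n)\in E^n$). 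With that caveat addressed, your proof is complete.
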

We say that the covariant representation $(\sigma, T)$ is {\it fully co-isometric} if $\widetilde{T}$ is co-isometric, that is, $\widetilde{T}\widetilde{T}^*=I_{\mathcal{H}}.$ Let $E$ be a $C^*$-correspondence over a $C^*$-algebra $\mathcal M.$ Then for each $n\in \mathbb{N},$ let us recall the following notation and definition of Wandering subspaces from \cite{H16}:

\begin{definition}
Let $E$ be a $C^*$-correspondence over a $C^*$-algebra $\mathcal M.$ Let $(\sigma , T)$ be a completely bounded, covariant representation of $E$ on a Hilbert space $\mathcal H$. For a closed $\sigma(\mathcal{M})$-invariant subspace $\mathcal{W}$ of $\mathcal{H},$ we define $$\mathfrak{L}_{n}(\mathcal{W}):=\bigvee \{T(\xi_1)T(\xi_2)\cdots T(\xi_n)h \: : \: \xi_i\in E , h \in \mathcal{W} \},$$ for $n\in\mathbb N$ and $\mathfrak{L}_{0}(\mathcal{W}):=\mathcal{W}$. Then $\mathcal W$ is called {\rm wandering} for the covariant representation $(\sigma , T)$, if
 $\mathcal{W}$ is orthogonal to the subspaces $\mathfrak{L}_{n}(\mathcal{W}),$ for all $n \in \mathbb{N}_0.$ We say that $(\sigma, T)$ has {\rm generating wandering subspace property} if
 $$\mathcal{H}=\bigvee_{n \in \mathbb{N}_0}\mathfrak{L}_{n}(\mathcal{W}), $$ for some {\rm wandering} subspace $\mathcal{W}$ of $\mathcal{H},$ and we call $\mathcal{W}$ as {\rm generating wandering subspace} for $(\sigma, T).$
\end{definition}
Note that if $(\sigma, T)$ is isometric covariant representation of $E$ on $\mathcal{H},$ then $\mathcal{W}$ is wandering subspace for $(\sigma, T)$ if and only if  $$\mathfrak{L}_{n}(\mathcal{W})\: \bot \: \mathfrak{L}_{m}(\mathcal{W}),$$ for all   distinct $n,m \in \mathbb{N}_0.$
\begin{definition}
	Let  $\mathcal K$ be a closed subspace of $ \mathcal H.$  The subspace $\mathcal K$ is called $(\sigma, T)$-{\it invariant} (respectively, $(\sigma, T)$-{\it reducing}) (cf. \cite{SZ08}) if it   is $\sigma(\mathcal M)$-invariant (i.e. the projection onto $\mathcal K$, will be denoted throughout by $P_{\mathcal K}$, lies  in
$\sigma(\mathcal M)')$, and if $\mathcal K$ (respectively, both $ \mathcal{K}, \mathcal{K}^{\bot}$) is  left invariant by each operator $T (\xi)$ for $\xi \in E.$ Then  the natural restriction of this representation provides a new representation of $E$ on $\mathcal{K}$ and it will denoted by $(\sigma, T)|_{\mathcal{K}}.$
\end{definition}
\begin{definition}
Let $(\sigma, T)$ be a completely bounded, covariant representation of $E$ on a Hilbert space $\mathcal{H}.$ We say that the covariant representation $(\sigma, T)$ admits {\rm Wold-type decomposition} if the representation $(\sigma, T)$ decomposes into a direct sum $(\sigma_1, T_1) \oplus (\sigma_2, T_2)$ on $\mathcal{H}=\mathcal{H}_1 \oplus \mathcal{H}_2$ where $(\sigma_1, T_1)=(\sigma, T)|_{\mathcal{H}_1}$ has generating wandering subspace property and $(\sigma_2, T_2)=(\sigma, T)|_{\mathcal{H}_2}$ is isometric and fully co-isometric covariant representation.
\end{definition}


The following Wold-type decomposition for a completely bounded, covariant representation of a $C^*$-correspondence which are close to isometric is from \cite{SHV16}(see also \cite{MS99}). We use symbol $I$ for $I_{\mathcal H}.$

\begin{theorem}\label{MT1}
Let $(\sigma, T)$ be a completely bounded, covariant representation of $E$ on  $\mathcal{H},$ which satisfies any one of the following conditions:
\begin{enumerate}
\item[(1)]  $(\sigma, T)$ is concave, that is, \\ $\| \wt{T}_2(\eta \otimes h)\|^2+\|\eta \otimes h\|^2 \leq 2 \|(I_E \otimes  \wt{T})(\eta \otimes h)\|^2, \eta \in E^{\otimes 2}, h \in \mathcal{H};$
\item[(2)]  $\|(I_{E} \otimes \wt{T})(\zeta)+\kappa\|^2 \leq 2(\|\zeta\|^2+\|\wt{T}(\kappa)\|^2), \:\:\:\: \zeta \in E^{\otimes 2} \otimes \mathcal{H},\kappa \in E \otimes \mathcal{H};$
\item [(3)]  $\|\wt{T}(\xi)\|^2+\|\wt{T}^*_2\wt{T}(\xi)\|^2 \leq 2\|\wt{T}^*\wt{T}(\xi )\|^2, \:  \xi \in E \otimes \mathcal{H}.$

\end{enumerate}
Then $(\sigma, T)$ admits  {\it Wold-type decomposition}. Moreover, the  decomposition is unique and  the corresponding reducing subspaces are given by
$$\mathcal{H}_1=\bigvee_{n\geq0}\mathfrak{L}_n(\mathcal{W}) \:\:\:\:\: \mathcal{H}_2=\bigcap_{n \geq 1}\wt{T}_n(E^{\otimes n} \otimes \mathcal{H}),$$ where $\mathcal{W}=\mbox{ker}\wt{T}^{*}.$  In particular, if $(\sigma, T)$ is analytic (pure), that is, $\mathcal{H}_{2}=\{0\}$, then $\mathcal{W}=\mathcal{H} \ominus \wt{T}(E \otimes \mathcal{H})$ is the generating wandering subspace for $(\sigma, T),$ that is, $\mathcal{H}=\bigvee_{n \in \mathbb{N}_0}\mathfrak{L}_n(\mathcal{W}).$
\end{theorem}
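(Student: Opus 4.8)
The plan is to transport Shimorin's strategy to the correspondence setting through the bijection $(\sigma,T)\mapsto\widetilde{T}$ of the preliminary lemma. First I would verify that conditions (1)--(3) are mutually equivalent by a polarisation argument and that each forces $\widetilde{T}^{*}\widetilde{T}\geq\tfrac12 I_{E\otimes\mathcal{H}}$: for instance (2) with $\zeta=0$ reads $\|\kappa\|^{2}\leq 2\|\widetilde{T}(\kappa)\|^{2}$, while discarding the first summand in (1) gives $I_{E}\otimes\widetilde{T}^{*}\widetilde{T}\geq\tfrac12$. Thus $\widetilde{T}$, and hence each iterate $\widetilde{T}_{n}$, is bounded below and has closed range. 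The intertwining relation $\widetilde{T}(\phi(a)\otimes I)=\sigma(a)\widetilde{T}$ makes $\mathcal{W}:=\ker\widetilde{T}^{*}$ a $\sigma(\mathcal{M})$-invariant subspace, and closedness of the range yields the first-level splitting $\mathcal{H}=\mathcal{W}\oplus\widetilde{T}(E\otimes\mathcal{H})$.

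Next I would fix $\mathcal{W}=\ker\widetilde{T}^{*}$ and record that it is wandering: for $n\geq1$ one has $\mathfrak{L}_{n}(\mathcal{W})=\widetilde{T}_{n}(E^{\otimes n}\otimes\mathcal{W})\subseteq\widetilde{T}(E\otimes\mathcal{H})$, which is orthogonal to $\mathcal{W}$. Writing $\mathcal{R}_{n}:=\widetilde{T}_{n}(E^{\otimes n}\otimes\mathcal{H})$ (closed, since $\widetilde{T}_{n}$ is bounded below), I set $\mathcal{H}_{1}:=\bigvee_{n\geq0}\mathfrak{L}_{n}(\mathcal{W})$ and $\mathcal{H}_{2}:=\bigcap_{n\geq1}\mathcal{R}_{n}$. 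Tensoring the first-level splitting with $E^{\otimes n}$ and applying $\widetilde{T}_{n}$ gives $\mathcal{R}_{n}=\overline{\mathfrak{L}_{n}(\mathcal{W})+\mathcal{R}_{n+1}}$, and iterating this produces, for every $N$,
\[ \mathcal{H}=\overline{\mathfrak{L}_{0}(\mathcal{W})+\cdots+\mathfrak{L}_{N-1}(\mathcal{W})+\mathcal{R}_{N}}. \]

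The heart of the matter, and the step I expect to be the main obstacle, is to prove that $\mathcal{H}_{2}$ reduces $(\sigma,T)$ and that $(\sigma,T)|_{\mathcal{H}_{2}}$ is isometric and fully co-isometric; this is the only place where concavity is genuinely used. On $\mathcal{H}_{2}$ the map $\widetilde{T}$ sends $E\otimes\mathcal{H}_{2}$ onto $\mathcal{H}_{2}$ and is bounded below, hence invertible there. Shimorin's mechanism --- equivalently, the control that concavity exerts on the Cauchy dual $\widetilde{T}(\widetilde{T}^{*}\widetilde{T})^{-1}$, or the concavity of the scalar sequences $n\mapsto\|\widetilde{T}_{n}\eta\|^{2}$ --- forces the inequality in (1) to saturate on this invertible residual part, collapsing it to the equality $\|\widetilde{T}(\kappa)\|=\|\kappa\|$ for $\kappa\in E\otimes\mathcal{H}_{2}$. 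Thus $\widetilde{T}|_{E\otimes\mathcal{H}_{2}}$ is isometric and surjective, so unitary; this simultaneously gives that $(\sigma,T)|_{\mathcal{H}_{2}}$ is isometric and fully co-isometric and that $\widetilde{T}^{*}\mathcal{H}_{2}\subseteq E\otimes\mathcal{H}_{2}$, i.e.\ $\mathcal{H}_{2}$ is reducing.

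Finally I would assemble the decomposition. Since $\mathcal{H}_{2}\subseteq\mathcal{R}_{1}=\widetilde{T}(E\otimes\mathcal{H})$ and $\mathcal{W}=\mathcal{R}_{1}^{\perp}$, we have $\mathcal{W}\perp\mathcal{H}_{2}$; as $\mathcal{H}_{2}$ is reducing, $\widetilde{T}_{n}^{*}\mathcal{H}_{2}\subseteq E^{\otimes n}\otimes\mathcal{H}_{2}$, whence for $u\in\mathcal{H}_{2}$ and $w\in\mathcal{W}$ one gets $\langle u,\widetilde{T}_{n}(\eta\otimes w)\rangle=\langle\widetilde{T}_{n}^{*}u,\eta\otimes w\rangle=0$, giving $\mathfrak{L}_{n}(\mathcal{W})\perp\mathcal{H}_{2}$ and therefore $\mathcal{H}_{1}\perp\mathcal{H}_{2}$. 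Letting $N\to\infty$ in the iterated relation (so that $\mathcal{R}_{N}\downarrow\mathcal{H}_{2}$) now yields $\mathcal{H}=\overline{\mathcal{H}_{1}+\mathcal{H}_{2}}=\mathcal{H}_{1}\oplus\mathcal{H}_{2}$ with both summands reducing. By construction $\mathcal{W}$ is a generating wandering subspace for $(\sigma,T)|_{\mathcal{H}_{1}}$, while $(\sigma,T)|_{\mathcal{H}_{2}}$ is isometric and fully co-isometric, which is the asserted Wold-type decomposition; uniqueness follows because $\mathcal{H}_{2}=\bigcap_{n}\mathcal{R}_{n}$ is intrinsically the largest reducing subspace on which the representation is isometric and fully co-isometric. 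The pure case $\mathcal{H}_{2}=\{0\}$ is then the immediate specialisation, with $\mathcal{W}=\mathcal{H}\ominus\widetilde{T}(E\otimes\mathcal{H})$ generating.
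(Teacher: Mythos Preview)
The paper does not prove this theorem: it is stated in the preliminary subsection with the attribution ``is from \cite{SHV16} (see also \cite{MS99})'' and then used as a black box throughout Sections~3 and~4. There is thus no in-paper proof to compare against.

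Your outline follows Shimorin's template and the overall architecture is right, but two points would need attention before it could stand on its own. First, the claim that conditions (1)--(3) are ``mutually equivalent by a polarisation argument'' is not correct even in the scalar case: Shimorin's concavity condition and his condition~(2) are genuinely different hypotheses (neither implies the other for general bounded-below operators), and the published proofs treat them separately. You should not try to reduce to a single case.

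Second, and more seriously, the passage ``Letting $N\to\infty$ in the iterated relation (so that $\mathcal{R}_{N}\downarrow\mathcal{H}_{2}$) now yields $\mathcal{H}=\overline{\mathcal{H}_{1}+\mathcal{H}_{2}}$'' is the real gap. From $\mathcal{H}=\overline{\mathfrak{L}_{0}(\mathcal{W})+\cdots+\mathfrak{L}_{N-1}(\mathcal{W})+\mathcal{R}_{N}}$ for each $N$ you cannot simply pass to the limit: the summands are not mutually orthogonal (because $\widetilde{T}$ is not isometric), and strong convergence $P_{\mathcal{R}_{N}}\to P_{\mathcal{H}_{2}}$ does not by itself force the closed span to stabilise at $\mathcal{H}_{1}\oplus\mathcal{H}_{2}$. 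This is exactly the place where the full strength of the hypothesis is needed, not merely the lower bound $\widetilde{T}^{*}\widetilde{T}\geq\tfrac12 I$. In the sources the argument proceeds either through the Cauchy dual $\widetilde{T}(\widetilde{T}^{*}\widetilde{T})^{-1}$ (showing that it is a contraction, so that the associated wandering decomposition is genuinely orthogonal), or by first reducing to the analytic case on $\mathcal{H}_{2}^{\perp}$ and then invoking a separate generating-wandering-subspace theorem that itself consumes the concavity/Shimorin inequality. Your sketch acknowledges that this step is ``the heart of the matter'' for the $\mathcal{H}_{2}$ part but then bypasses it for the spanning assertion; that assertion is equally delicate and is where the work actually lies.
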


	

In the case of isometric covariant representations of product systems Solel proved in \cite{S08} that the doubly commuting condition \eqref{doubly} is equivalent to \emph{Nica-covariance} (see \cite{N92}). Our main theorem, Theorem \ref{MT2}, extends Theorem \ref{MT1} for the doubly commuting completely bounded covariant representations which are close to isometric.


\section{Notation and basic definitions for doubly commuting case}

Throughout the paper $k \in \mathbb N.$ The central tool we require is a
product system of $C^*$-correspondences (see \cite{F02,S06, S08, SZ08}): The {\it product system}
$\be$ is defined by a family of $C^*$-correspondences $\{E_1, \ldots,E_k\},$ and by the unitary
isomorphisms $t_{i,j}: E_i \ot E_j \to E_j \ot E_i$ ($i>j$). Using these identifications, for all
${\bf n}=(n_1, \cdots, n_k) \in \Nk$ the correspondence $\be ({\bf n})$ is identified with $E_1^{\ot^{ n_1}} \ot \cdots \ot E_k^{\ot^{n_k}}.$ We use notations $t_{i,i} = \id_{E_i \ot E_i}$ and $t_{i,j} = t_{j,i}^{-1}$ when $i<j.$


\begin{definition}
	Assume $\be$ to be a product system over $\Nk$. Let $\sigma$ be a
	representation of $\mathcal M$ on $\mathcal H$ and let $T^{(i)}:E_i \to B(\mathcal H)$ be a  linear map for $1 \leq i \leq k.$ The tuple $(\sigma, T^{(1)}, \dots, T^{(k)})$ is  called a {\rm completely bounded (respectively, completely contractive) covariant representation} of product system $\mathbb{E}$ on $\mathcal{H}$ if each pair $(\sigma, T^{(i)})$ is a {\rm completely bounded (respectively, completely contractive) covariant representation} of $E_i$ on $\mathcal{H}$ and satisfy the commutation relation
	\begin{equation} \label{rep} \wT^{(i)} (I_{E_i} \ot \wT^{(j)}) = \wT^{(j)} (I_{E_j} \ot \wT^{(i)}) (t_{i,j} \ot I_{\mathcal H})\end{equation}
	with $1\leq i,j\leq k$.
The covariant representation $(\sigma, T^{(1)}, \ldots, T^{(k)})$ is called {\rm isometric} if every $(\sigma, T^{(i)})$ is isometric as a covariant representation of the $C^*$-correspondence $E_i$, and similarly {\rm fully coisometric} representation  is defined.
\end{definition}
\begin{example}\label{Exa1}
Let  $T=(T_1, \dots, T_k)$ be a $k$-tuple of  operators on  Hilbert space $\mathcal{H}$ such that $T_jT_i=q_{ij}T_iT_j$ for all $1\leq i, j \leq k,$ where $q_{ij}$ are non-zero complex numbers, then we shall refer to $(T_1, \dots, T_k)$ as  {\it $q$-commuting} (cf. \cite{BB02, JPS05}). This implies that $q_{ij}=q^{-1}_{ji}$ when $T_iT_j\neq 0,$ where $ i,j \in\{1, \dots ,k\}.$ On the other hand, suppose $ z_{ij} \in \mathbb{T}$ and $z_{ji}=\overline{z}_{ij}$ for all $i \neq j.$ For $1 \leq i,j \leq k$ set $E_i:
=\mathbb{C}$ and define $t_{ij}: E_i \otimes E_j \to E_j \otimes E_i$ as follows:
$$t_{ij}(x \otimes y):=\left\{
\begin{array}{ll}
z_{ij}(y \otimes x), & \hbox{if} \: i \neq j, \\
x \otimes y, & \hbox{if} \:i=j.
\end{array}
\right.$$
Observe that  $t_{ij}$'s are unitaries with $t_{ij}^{-1}=t_{ji}$ for $i \neq j$ and $t_{ii}=id.$  Hence $(\{ E_i \}_{i=1}^k, \{ t_{ij}\}_{i,j=1}^k )$ uniquely determines a product system $\mathbb{E}$ over $\mathbb{N}^k_0.$ Let $1 \leq i \leq k,$ define $T^{(i)}: E_i \to B(\mathcal{H})$ by $T^{(i)}(x)h:=xT_i(h), x \in E_i, h \in \mathcal{H}$ and $\sigma: \mathbb{C} \to B(\mathcal{H})$ by $\sigma(x):=xI _{\mathcal{H}},$ that is, $\sigma$ is identity representation of $\mathbb{C}$ on a Hilbert space $\mathcal{H}.$ It is easy to see that if $(T_1, \dots, T_k)$ is  $q$-commuting, then $(\sigma, T^{(1)}, \dots , T^{(k)})$ is completely bounded covariant representation of $\mathbb{E}$ on the Hilbert space $\mathcal{H}.$ Conversely, let $(\sigma, T^{(1)}, \ldots,T^{(k)})$ be a completely bounded  covariant representation of the product systems $\be$ over $\mathbb{N}^k_0$ on the Hilbert space $\mathcal H,$ where $E_i=\mathbb{C},$ for all $ 1 \leq i \leq n.$  Since $E_i=\mathcal{M}=\mathbb{C}$, $\sigma$ is the identity representation and for each $i \in\{1, \dots, k\},$ the completely bounded linear map $T^{(i)}$ may be considered as a bounded operator on $\mathcal{H.}$ Therefore, the map
\begin{align}\label{qc}
(\sigma, T^{(1)}, \ldots,T^{(k)})\longleftrightarrow (T^{(1)}(1), T^{(2)}(1), \dots, T^{(k)}(1))
\end{align}
induces a bijection between the set of all completely bounded covariant representation of $\mathbb{E}$ (with $E_i= \mathbb{C}$ for all $1 \leq i \leq k$) on a Hilbert space $\mathcal{H}$ over the $C^*$-algebra $\mathbb{C}$ onto the collection of all $q$-commuting operators $(T_1, \dots, T_k)$ on $\mathcal{H}.$
\end{example}

We say that two such a completely bounded covariant representations $(\sigma, T^{(1)}, \ldots, T^{(k)})$ and $(\psi, V^{(1)}, \ldots, V^{(k)})$ of the product system $\be$ over $\mathbb{N}^k_0$,
respectively on Hilbert spaces $\mathcal H$ and $\mathcal K$, are {\it isomorphic}  (cf. \cite{SZ08})  if we have a unitary $U:\mathcal H \to
\mathcal K$ which gives the unitary equivalence of representations $\sigma$ and $\psi,$ and also for each $1\leq i \leq k$, $\xi \in E_i$ one has $S^{(i)}(\xi) = U T^{(i)} (\xi) U^*$.

For each $1\leq i \leq k$
and $l \in \bn$ define $\wT^{(i)}_l: E_i^{\ot l}\otimes \mathcal H\to \mathcal H$ by
\[ \wT^{(i)}_l (\xi_1 \ot \cdots \ot \xi_l \ot h) := T^{(i)} (\xi_1) \cdots T^{(i)}(\xi_l) h\]
 where $\xi_1, \ldots, \xi_l \in E_{i}, h \in \mathcal H$. Then
\begin{equation}\label{eqnn}\wT^{(i)}_l=\wT^{(i)}(I_{E_i} \otimes \wT^{(i)}) \cdots (I_{E^{\otimes l-1}_i} \otimes  \wT^{(i)}).\end{equation}



 For $\mathbf{n}=(n_1, \cdots, n_k) \in \mathbb{N}_0^k $, we write $\wT_{\mathbf{n}}:\mathbb{E}(\mathbf{n})\otimes_{\sigma} \mathcal{H} \to \mathcal{H}$ by
 $$\wT_{\mathbf{n}}=\wT^{(1)}_{n_1}\left(I_{E_1^{\otimes n_1}} \otimes\wT^{(2)}_{n_2}\right) \cdots \left(I_{E_1^{\otimes n_1} \otimes \cdots \otimes E_{k-1}^{\otimes {n_{k-1}}}} \otimes\wT^{(k)}_{n_k}\right).$$
 The map $T_{\mathbf{n}}: \mathbb{E}(\mathbf{n}) \to B(\mathcal{H})$ is then defined by $T_{\mathbf{n}}(\xi)h=\wT_{\mathbf{n}}(\xi \otimes h), ~ \xi \in \mathbb{E}(\mathbf{n}), h \in \mathcal{H}.$

\begin{definition}	
	Let  $\mathcal K$ be a closed subspace of a Hilbert space $ \mathcal H.$  The subspace $\mathcal K$ is called {\rm invariant}({ respectively, \rm reducing}) (cf. \cite{SZ08}) for a covariant representation $(\sigma, T^{(1)}, \ldots, T^{(k)})$ on $\mathcal H,$ if   $ \mathcal{K}$ is $(\sigma, T^{(i)})$-invariant(respectively,  $(\sigma, T^{(i)})$-reducing) subspace for  $1 \leq i \leq k.$  Then it is evident that the natural `restriction' of this representation to $\mathcal K$ provides a new representation
	of $\be$ on $\mathcal K$, which is called a {\rm summand} of $(\sigma, T^{(1)}, \ldots, T^{(k)})$ and will be
	denoted by $(\sigma, T^{(1)}, \ldots, T^{(k)})|_{\mathcal K}$.	
	\end{definition}

 Let $A=\{i_1, \dots i_p \} \subseteq  I_{k},$ where $I_k:=\{1,2, \dots ,k\}$,   denote $\mathbb{N}_0^A:=\{\mathbf{m}=(m_{i_1}, \cdots ,m_{i_p})\: : \: m_{i_j} \in \mathbb{N}_0, 1 \leq j \leq p\}.$ Let $\mathbf{m}=(m_{i_1}, \cdots ,m_{i_p})\in \mathbb{N}_0^A$, define $\wT_{\mathbf{m}}^A:\mathbb{E}(\mathbf{m})\otimes_{\sigma} \mathcal{H} \to \mathcal{H}$ by
 $$\wT_{\mathbf{m}}^A=\wT^{(i_1)}_{m_{i_1}}\left(I_{E_{i_1}^{\otimes m_{i_1}}} \otimes\wT^{(i_2)}_{m_{i_2}}\right) \cdots \left(I_{E_{i_1}^{\otimes m_{i_1}} \otimes \cdots \otimes E_{i_{p-1}}^{\otimes {m_{i_{p-1}}}}} \otimes\wT^{(i_p)}_{m_{i_p}}\right).$$

 Moreover, if $\mathcal K$ is a $\sigma(\mathcal{M})$-invariant subspace,  then we denote $$\mathfrak{L}_{\mathbf{m}}^A(\mathcal{K}):=\bigvee \{T_{m_{i_1}}^{(i_1)}(\eta_{i_1}) \cdots T_{m_{i_p}}^{(i_p)}(\eta_{i_p})h\: : \: \:\: \eta_{i_j} \in E_{i_j}^{\otimes m_{i_j}}, 1 \leq j \leq p , h \in \mathcal{K}\}.$$ Clearly $\mathfrak{L}_{\mathbf{m}}^A(\mathcal{K})=\overline{\wT_{\mathbf{m}}^A(\mathbb{E}(\mathbf{m}) \otimes_{\sigma} \mathcal{K})}.$ It is easy to see that   $\mathfrak{L}_{\mathbf{m}}^A(\mathcal{K})$ is a smallest  $(\sigma,T^{(i_1)}, \dots ,T^{(i_p)})$-invariant subspace which contains $\mathcal{K}.$


\begin{definition}\begin{enumerate}\item
 We say that the $\sigma(\mathcal{M})$-invariant closed subspace $\mathcal{K}$ is said to be {\rm wandering} for  the covariant representation $(\sigma, T^{(i_1)} ,\dots, T^{(i_p)})$ if
$$\mathcal{K}\perp\mathfrak{L}_{\mathbf{m}}^{A}(\mathcal{K})~\mbox{ for each }~\mathbf{m} \in \mathbb{N}_0^{A} \setminus\{0\}.$$
\item
The covariant representation $(\sigma, T^{(i_1)} ,\dots, T^{(i_p)})$  is said to have the {\rm generating wandering subspace  property} if there exists a wandering subspace $\mathcal{K} \subseteq \mathcal{H}$ for $(\sigma, T^{(i_1)} ,\dots, T^{(i_n)})$ such that $[\mathcal{K}]_{T_{A}}=\mathcal{H},$ that is, $$\mathcal{H}=\bigvee_{\mathbf{m} \in \mathbb{N}_0^{A}}\mathfrak{L}_{\mathbf{m}}^{A}(\mathcal{K}).$$
\end{enumerate}
\end{definition}

Let $A=\{i_1, \dots ,i_n \}$ be a non-empty subset of $I_k,$ define the closed subspace $\mathcal{W}_{A}$ of $\mathcal{H}$ by
\begin{align}\label{WA}
\mathcal{W}_{A}:=\bigcap_{j=1}^n (\mathcal{H} \ominus \wt{T}^{(i_j)}(E_j \otimes \mathcal{H})).
\end{align}
Again, if $A=\{i\}$ we simply write $\mathcal{W}_i:=\mathcal{H} \ominus \wt{T}^{(i)}(E_i \otimes \mathcal{H}).$ Therefore $$\mathcal{W}_{A}=\bigcap_{i_j\in A}\mathcal{W}_{i_j}.$$

\begin{definition}  \label{dcom}
	A  completely bounded, covariant representation $(\sigma, T^{(1)}, \ldots,$ $ T^{(k)})$ of  $\be$ on a Hilbert space $\mathcal H$ is said to be {\rm doubly
	commuting}  (cf. \cite{S08}) if for each distinct $i,j \in \{1,\ldots,k\}$ we have
	\begin{equation}\label{doubly}\wT^{(j)^*} \wT^{(i)} =
	(I_{E_j} \ot \wT^{(i)})  (t_{i,j} \ot I_{\mathcal H})  (I_{E_i} \ot \wT^{(j)^*}).
	\end{equation}
\end{definition}

\begin{definition}
	We say that a q-commuting operators $(T_1, \dots,T_k)$ is {\rm $q$-doubly commuting} (cf. \cite{Web,MP}) if
	\begin{align}\label{qb}
		T_i^*T_j=\overline{z_{ij}}T_jT_i^*, \hspace{1cm} \mbox{for all} \:\:  i,j  \:\mbox{with} \:i \neq j.
\end{align}\end{definition}
\begin{example}
	If $T:=(T_1, \dots,T_k)$ is $q$-doubly commuting tuple, the corresponding covariant representation $(\sigma, T^{(1)},$ $ \dots , T^{(k)})$ defined in Example \ref{Exa1} is doubly commuting (for details see \cite[Section 4]{S08}).
\end{example}

For distinct $i, j \in \onek$, a simple calculation (cf. \cite[p. 460]{SZ08}) using Equation \eqref{doubly} yields
\begin{align}\label{eqn*}
\wt{T}^{(i)^*}\wt{T}^{(i)}\wt{T}^{(j)^*}\wt{T}^{(j)} =
\wt{T}^{(j)^*}\wt{T}^{(j)}\wt{T}^{(i)^*}\wt{T}^{(i)} .
\end{align}
Thus the operators $\{I_{\mathcal H}-\wt{T}^{(i)^*}\wt{T}^{(i)}\}_{i=1}^k$ commute to each other. The following proposition is essential in order to extend Theorem \ref{MT1} for the covariant representation $(\sigma, T^{(1)}, \dots, T^{(k)})$  and it follows immediately
from \cite[Proposition 4.6]{HS19}.

\begin{proposition}\label{P21}
Let $\mathbb{E}$ be a product system of $C^*$-correspondences over $\mathbb{N}_0^k$ and let $(\sigma, T^{(1)}, \dots,$ $ T^{(k)})$ be a doubly commuting completely bounded, covariant representation of  $\mathbb{E}$ on a Hilbert space $\mathcal{H}.$ Then for each non-empty subset $\alpha \subseteq I_{k}$, the subspace $\mathcal{W}_{\alpha}$ is $(\sigma, T^{(j)})$-reducing,  where $j \notin \alpha.$ Moreover
$$\mathcal{W}_{\alpha} \ominus \wt{T}^{(j)}(E_j \otimes \mathcal{W}_{\alpha})=\mathcal{W}_{\alpha} \cap \mathcal{W}_j, \:\: \for all \:j \notin \alpha  .$$
\end{proposition}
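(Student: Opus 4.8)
The plan is to reduce the statement for an arbitrary non-empty $\alpha\subseteq I_k$ to the singleton case $\alpha=\{i\}$, which is \cite[Proposition 4.6]{HS19}, and then to read off both assertions from two elementary facts: reducing subspaces are stable under intersection, and the orthogonal complement of the closure of a range is the kernel of the adjoint. First I would record the identification $\mathcal{W}_i=\mathcal{H}\ominus\wt{T}^{(i)}(E_i\otimes\mathcal{H})=\ker\wt{T}^{(i)^*}$, so that $\mathcal{W}_\alpha=\bigcap_{i\in\alpha}\mathcal{W}_i=\bigcap_{i\in\alpha}\ker\wt{T}^{(i)^*}$. Fix $j\notin\alpha$; then $i\neq j$ for every $i\in\alpha$, and the singleton case gives that each $\mathcal{W}_i$ (with $i\in\alpha$) is $(\sigma,T^{(j)})$-reducing.

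For the reducing assertion I would argue that an intersection of $(\sigma,T^{(j)})$-reducing subspaces is again $(\sigma,T^{(j)})$-reducing. Indeed, for $x\in\mathcal{W}_\alpha$, $\xi\in E_j$ and $a\in\mathcal{M}$ the vectors $T^{(j)}(\xi)x$, $T^{(j)}(\xi)^*x$ and $\sigma(a)x$ lie in each $\mathcal{W}_i$ (because every $\mathcal{W}_i$ is $(\sigma,T^{(j)})$-reducing), hence in $\mathcal{W}_\alpha$. Thus $P_{\mathcal{W}_\alpha}\in\sigma(\mathcal{M})'$ and both $\mathcal{W}_\alpha,\mathcal{W}_\alpha^{\perp}$ are invariant under every $T^{(j)}(\xi)$, which is precisely the first claim.

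With the reducing property in hand, the displayed formula is immediate. Since $\mathcal{W}_\alpha$ is $(\sigma,T^{(j)})$-reducing, the map $S:=\wt{T}^{(j)}|_{E_j\otimes\mathcal{W}_\alpha}$ takes values in $\mathcal{W}_\alpha$ and its adjoint is $\wt{T}^{(j)^*}|_{\mathcal{W}_\alpha}$, because reducing (not merely invariance) guarantees $\wt{T}^{(j)^*}\mathcal{W}_\alpha\subseteq E_j\otimes\mathcal{W}_\alpha$. Applying $\mathcal{W}_\alpha\ominus\overline{S(E_j\otimes\mathcal{W}_\alpha)}=\ker S^{*}$ then gives
\[
\mathcal{W}_\alpha\ominus\wt{T}^{(j)}(E_j\otimes\mathcal{W}_\alpha)=\ker S^{*}=\{x\in\mathcal{W}_\alpha:\wt{T}^{(j)^*}x=0\}=\mathcal{W}_\alpha\cap\mathcal{W}_j,
\]
which is the second claim.

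The one point that needs care is the identity $S^{*}=\wt{T}^{(j)^*}|_{\mathcal{W}_\alpha}$, that is, that the adjoint of the compression of $\wt{T}^{(j)}$ to $\mathcal{W}_\alpha$ equals the compression of $\wt{T}^{(j)^*}$; this is exactly where the reducing hypothesis is used (one checks $\wt{T}^{(j)^*}x\perp E_j\otimes\mathcal{W}_\alpha^{\perp}$ using that $T^{(j)}(\xi)^*$ preserves $\mathcal{W}_\alpha$), and where one must keep track of the embedding of $E_j\otimes_\sigma\mathcal{W}_\alpha$ into $E_j\otimes_\sigma\mathcal{H}$ afforded by $P_{\mathcal{W}_\alpha}\in\sigma(\mathcal{M})'$. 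Granting this, together with the inheritance of the doubly commuting relation \eqref{doubly} by the restriction to the reducing subspace, both conclusions do indeed follow immediately from the singleton case \cite[Proposition 4.6]{HS19}.
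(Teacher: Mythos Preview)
Your proof is correct and follows the same route as the paper, which simply records that the proposition ``follows immediately from \cite[Proposition 4.6]{HS19}''; you have identified that reference as (at least) the singleton case and spelled out the elementary deduction to general $\alpha$ via the stability of reducing subspaces under intersection together with the range--kernel duality $\mathcal{W}_\alpha\ominus\overline{\operatorname{ran}S}=\ker S^{*}$. The only delicate point---that the adjoint of the restriction $S=\wt{T}^{(j)}|_{E_j\otimes\mathcal{W}_\alpha}$ coincides with $\wt{T}^{(j)^*}|_{\mathcal{W}_\alpha}$, which requires $P_{\mathcal{W}_\alpha}\in\sigma(\mathcal{M})'$ and the reducing (not merely invariant) property---is exactly where care is needed, and you have flagged it correctly.
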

\section{Shimorin-Wold-type decomposition for a doubly commuting covariant representation of a product system}\label{SecW}

The following theorem is main result of this paper, we use it in the next section, to provide a complete classification of doubly commuting invariant subspaces.
\begin{theorem}\label{MT2}

Let $(\sigma, T^{(1)}, \dots, T^{(k)})$ be a doubly commuting completely bounded, covariant representation of the product system $\mathbb{E}$ on a Hilbert space $\mathcal{H}$  satisfies one of the following properties:
\begin{enumerate}
\item[(1)]  $(\sigma, T^{(i)})$ is concave for each $i=1, \dots, k,$ that is, \\ $\| \wt{T}_2^{(i)}(\eta_i \otimes h)\|^2+\|\eta_i \otimes h\|^2 \leq 2 \|(I_{E_i} \otimes  \wt{T}^{(i)})(\eta_i \otimes h)\|^2, \eta_i \in E^{\otimes 2}_i, h \in \mathcal{H};$
\item[(2)]  for any  $\zeta_i \in E^{\otimes 2}_i \otimes \mathcal{H},\kappa_i \in E_i \otimes \mathcal{H} $

$\|(I_{{E}_i} \otimes \wt{T}^{(i)})(\zeta_i)+\kappa_i\|^2 \leq 2(\|\zeta_i\|^2+\|\wt{T}^{(i)}(\kappa_i)\|^2).$

\item[(3)]   $\|\wt{T}^{(i)}(\xi_i)\|^2+\|\wt{T}^{(i)*}_2\wt{T}^{(i)}(\xi_i)\|^2 \leq 2\|\wt{T}^{(i)*}\wt{T}^{(i)}(\xi_i )\|^2, \:  \xi_i \in E_i \otimes \mathcal{H}.$

\end{enumerate}
 Then for   $2 \leq m \leq k,$ there exists $2^m$  $(\sigma, T^{(1)}, \ldots,	 T^{(m)})$-reducing subspaces $\{\mathcal{H}_{A} \: : \: A \subseteq I_m \}$ such that $\mathcal{H}=\bigoplus_{A \subseteq I_m}\mathcal{H}_A$   and for each  $A=\{i_1, \dots, i_p\} \subseteq I_m: \: (\sigma, T^{(i_1)}, \dots,T^{(i_p)})|_{\mathcal{H}_A} $ has  generating wandering subspace property of the product system $\mathbb{E}_A$ over $\mathbb{N}^A_0$ given by the family of $C^*$-Correspondence $\{E_{i_1}, \dots, E_{i_p}\}$  and $(\sigma, T^{(i)})|_{\mathcal{H}_A}$ is  isometric and fully co-isometric covariant representation whenever $i \in I_m \setminus A.$  Moreover, the above decomposition is unique and
 \begin{align}\label{HA}
  \mathcal{H}_A&=\bigvee_{\mathbf{n} \in\mathbb{N}_0^A}\mathfrak{L}_{\mathbf{n}}^{A}\left(\bigcap_{\mathbf{j} \in \mathbb{N}_0^{I_m \setminus A}} \mathfrak{L}_{\mathbf{J}}^{I_m \setminus A}(\mathcal{W}_A)\right).
  \end{align}
\end{theorem}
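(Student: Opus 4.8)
The plan is to reduce everything to the single-variable decomposition of Theorem \ref{MT1} and then to show that the $k$ individual decompositions are \emph{simultaneously reducing}, so that mutually refining them produces the $2^m$ summands. Since each of the hypotheses (1)--(3) of Theorem \ref{MT2} is exactly the corresponding hypothesis of Theorem \ref{MT1} for the single representation $(\sigma,T^{(i)})$, Theorem \ref{MT1} applies to each $i\in I_m$ and gives $\mathcal H=\mathcal S_i\oplus\mathcal U_i$, where $\mathcal S_i=\bigvee_{n\ge0}\mathfrak L_n^{\{i\}}(\mathcal W_i)$ carries the generating wandering subspace with $\mathcal W_i=\ker\wt T^{(i)*}$, and $\mathcal U_i=\bigcap_{n\ge1}\wt T^{(i)}_n(E_i^{\otimes n}\otimes\mathcal H)$ carries the isometric and fully co-isometric part. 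Writing $Q_i$ for the orthogonal projection onto $\mathcal U_i$, the whole scheme rests on proving that the $Q_i$ commute and are compatible with every $T^{(l)}$.

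The main lemma, and the principal obstacle, is that for distinct $i,j$ the subspace $\mathcal U_i$ is $(\sigma,T^{(j)})$-reducing. I would first record the algebraic consequence of double commutativity that, for every $n\ge1$,
\[
\wt T^{(i)*}_n\,\wt T^{(j)}=(I_{E_i^{\otimes n}}\otimes\wt T^{(j)})\,\Theta_n\,(I_{E_j}\otimes\wt T^{(i)*}_n),
\]
where $\Theta_n$ is the unitary obtained by iterating the flips $t_{i,j}$. This follows by induction on $n$: the base case $n=1$ is \eqref{doubly} with $i,j$ interchanged, and the inductive step uses \eqref{eqnn} together with \eqref{doubly}. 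The identity shows $\wt T^{(j)}(E_j\otimes\ker\wt T^{(i)*}_n)\subseteq\ker\wt T^{(i)*}_n$, i.e. each range $\wt T^{(i)}_n(E_i^{\otimes n}\otimes\mathcal H)=(\ker\wt T^{(i)*}_n)^{\perp}$ is invariant under $T^{(j)}(\xi)^{*}$, while invariance under $T^{(j)}(\xi)$ is the representation-theoretic consequence of \eqref{rep}, obtained by pushing $\wt T^{(j)}$ through $\wt T^{(i)}_n$ (for $n=1$ both facts together are exactly Proposition \ref{P21} with $\alpha=\{i\}$). Hence every $\wt T^{(i)}_n(E_i^{\otimes n}\otimes\mathcal H)$ is $(\sigma,T^{(j)})$-reducing, and so is the intersection $\mathcal U_i$, whence $\mathcal S_i=\mathcal U_i^{\perp}$ is reducing as well. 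Since $Q_i$ lies in the von Neumann algebra generated by $\sigma(\mathcal M)$ and $\{T^{(i)}(\xi)\}$, while $Q_j$ commutes with that algebra, the family $\{Q_i\}_{i\in I_m}$ pairwise commutes.

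With the commuting family $\{Q_i\}$ in hand, existence is immediate: for $A\subseteq I_m$ set
\[
\mathcal H_A:=\Big(\bigcap_{i\in A}\mathcal S_i\Big)\cap\Big(\bigcap_{j\in I_m\setminus A}\mathcal U_j\Big)=\Big(\prod_{i\in A}(I-Q_i)\prod_{j\in I_m\setminus A}Q_j\Big)\mathcal H,
\]
so the $\mathcal H_A$ are mutually orthogonal, each is $(\sigma,T^{(l)})$-reducing for all $l\in I_m$, and $\mathcal H=\bigoplus_{A\subseteq I_m}\mathcal H_A$. For $j\in I_m\setminus A$ we have $\mathcal H_A\subseteq\mathcal U_j$, and the restriction of an isometric, fully co-isometric representation to a reducing subspace is again isometric and fully co-isometric. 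For $i\in A$ we have $\mathcal H_A\subseteq\mathcal S_i$, and since $\mathcal H_A$ reduces $T^{(i)}$ the inclusion $\bigcap_n\wt T^{(i)}_n(E_i^{\otimes n}\otimes\mathcal H_A)\subseteq\bigcap_n\wt T^{(i)}_n(E_i^{\otimes n}\otimes\mathcal S_i)=\{0\}$ shows that $(\sigma,T^{(i)})|_{\mathcal H_A}$ is analytic; feeding this into the wandering subspace theorem for commuting shifts (the product-system analogue of \cite[Theorem 3.3]{S14}) yields the joint generating wandering subspace property for $(\sigma,T^{(i_1)},\dots,T^{(i_p)})|_{\mathcal H_A}$.

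It remains to identify $\mathcal H_A$ with the right-hand side of \eqref{HA} and to prove uniqueness. For the formula I would compute the joint wandering subspace directly: iterating Proposition \ref{P21}, whose second clause $\mathcal W_\alpha\ominus\wt T^{(j)}(E_j\otimes\mathcal W_\alpha)=\mathcal W_\alpha\cap\mathcal W_j$ is precisely the inductive step needed to intersect kernels across directions, one shows that $\bigcap_{\mathbf j\in\mathbb N_0^{I_m\setminus A}}\mathfrak L_{\mathbf J}^{I_m\setminus A}(\mathcal W_A)$ is the generating wandering subspace for the analytic $A$-directions inside $\mathcal H_A$, after which $\mathcal H_A=\bigvee_{\mathbf n\in\mathbb N_0^A}\mathfrak L_{\mathbf n}^A(\cdots)$ is just the analyticity established above. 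Uniqueness follows because in any decomposition with the required properties the type in each direction is forced: the maximal subspace on which $(\sigma,T^{(j)})$ is isometric and fully co-isometric is exactly $\mathcal U_j$, so the ``unitary in direction $j$'' part must be $\mathcal U_j$ and the ``shift in direction $i$'' part must be $\mathcal S_i$, forcing each $\mathcal H_A$ to equal the intersection above. I expect the genuinely delicate point to be the main lemma, specifically tracking the flip unitaries $\Theta_n$ so that the kernels $\ker\wt T^{(i)*}_n$ are seen to be $(\sigma,T^{(j)})$-reducing for every $n$; once $\{Q_i\}$ is known to commute, the combinatorics of the $2^m$ summands and the verification of \eqref{HA} are bookkeeping built on Proposition \ref{P21}.
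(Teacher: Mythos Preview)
Your approach is correct but genuinely different from the paper's. The paper argues by induction on $m$: it applies Theorem~\ref{MT1} to $(\sigma,T^{(1)})$, then to $(\sigma,T^{(2)})|_{\mathcal W_1}$ (using Proposition~\ref{P21} to know $\mathcal W_1$ is $(\sigma,T^{(2)})$-reducing), and splices the two decompositions together to obtain the four pieces for $m=2$; the inductive step from $m$ to $m+1$ applies Theorem~\ref{MT1} to $(\sigma,T^{(m+1)})|_{\mathcal W_A}$ on each existing summand. You instead prove once that the single-variable Wold projections $Q_i$ onto $\mathcal U_i$ mutually commute and commute with every $T^{(l)}$, and then read off all $2^m$ summands at once as joint spectral pieces $\prod_{i\in A}(I-Q_i)\prod_{j\notin A}Q_j$. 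Your route is closer in spirit to the original S{\l}oci\'nski argument; the paper's inductive route follows \cite{S14,MP} and is more explicitly constructive, producing formula~\eqref{HA} along the way rather than verifying it afterwards.

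Two points to tighten. First, your justification that $Q_iQ_j=Q_jQ_i$ via ``$Q_i$ lies in the von Neumann algebra generated by $\sigma(\mathcal M)$ and $\{T^{(i)}(\xi)\}$'' is not obviously true in this generality (the $\wt T^{(i)}_n$ act between different spaces); the cleaner argument, already implicit in what you wrote, is that once $\mathcal U_i$ is $(\sigma,T^{(j)})$-reducing, $Q_i$ commutes with each $T^{(j)}(\xi)$, hence $Q_i\wt T^{(j)}_n=\wt T^{(j)}_n(I\otimes Q_i)$ and $Q_i$ preserves every $\overline{\mathrm{ran}\,\wt T^{(j)}_n}$ and therefore $\mathcal U_j$, so the self-adjoint projection $Q_i$ commutes with $Q_j$. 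Second, when you invoke ``the product-system analogue of \cite[Theorem~3.3]{S14}'' to get the joint generating wandering subspace on $\mathcal H_A$, note that in this paper that statement is Corollary~\ref{Cor1}, which is \emph{derived from} Theorem~\ref{MT2}; to avoid circularity you should cite \cite[Corollary~4.8]{HS19} directly, which is proved independently (the paper itself flags that Corollary~\ref{Cor1} is an alternative proof of that result).
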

\begin{proof}
We shall prove this result by Mathematical induction.

Suppose $m=2:$ Apply Wold-type decomposition, Theorem \ref{MT1} to the covariant representation $(\sigma, T^{(1)}),$ we get
\begin{align*}
 \mathcal{H}&=\bigvee_{n_1 \in \mathbb{N}_0}\widetilde{T}_{n_1}^{(1)}(E_{1}^{\otimes n_1} \otimes \mathcal{W}_{1}) \bigoplus \left( \bigcap_{n_1 \in \mathbb{N}_0}\mbox{ran}(\widetilde{T}_{n_1}^{(1)})\right)\\&=\nonumber \bigvee_{n_1 \in \mathbb{N}_0}\mathfrak{L}^{(1)}_{n_1}(\mathcal{W}_{1}) \bigoplus \left(\bigcap_{n_1 \in \mathbb{N}_0}\mathfrak{L}_{n_1}^{(1)}(\mathcal{H}) \right).
 \end{align*}
Since $\mathcal{W}_{1}$ is a $(\sigma, T^{(2)})$- reducing subspace, by applying the Wold-type decomposition to $(\sigma, T^{(2)})|_{\mathcal{W}_{1}}$, we have
 \begin{align*}
\mathcal{W}_{1}&=\bigvee_{n_2 \in \mathbb{N}_0}\mathfrak{L}^{(2)}_{n_2}(\mathcal{W}_{1}\ominus \mathfrak{L}^{(2)}(\mathcal{W}_{2}) ) \bigoplus \left(\bigcap_{n_2 \in \mathbb{N}_0}\mathfrak{L}_{n_2}^{(2)}(\mathcal{W}_{1})\right)\\&=\bigvee_{n_2 \in \mathbb{N}_0}\mathfrak{L}^{(2)}_{n_2}(\mathcal{W}_{1}\cap \mathcal{W}_{2} ) \bigoplus \left(\bigcap_{n_2 \in \mathbb{N}_0}\mathfrak{L}_{n_2}^{(2)}(\mathcal{W}_{1})\right),
 \end{align*}
 where the second equality follows from Proposition \ref{P21}. Therefore
 \begin{align}\label{Eqn1.1}
 \nonumber \mathcal{H}&=\bigvee_{n_1 \in \mathbb{N}_0}\mathfrak{L}^{(1)}_{n_1}(\mathcal{W}_{1}) \bigoplus \left(\bigcap_{n_1 \in \mathbb{N}_0}\mathfrak{L}_{n_1}^{(1)}(\mathcal{H}) \right).
 \\&=\nonumber \bigvee_{n_1 \in \mathbb{N}_0}\mathfrak{L}^{(1)}_{n_1}\left(\bigvee_{n_2 \in \mathbb{N}_0}\mathfrak{L}^{(2)}_{n_2}(\mathcal{W}_{1}\cap \mathcal{W}_{2} ) \bigoplus \left(\bigcap_{n_2 \in \mathbb{N}_0}\mathfrak{L}_{n_2}^{(2)}(\mathcal{W}_{1})\right) \right) \bigoplus \left(\bigcap_{n_1 \in \mathbb{N}_0}\mathfrak{L}_{n_1}^{(1)}(\mathcal{H}) \right)
 \\&=\bigvee_{\mathbf{n} \in \mathbb{N}^2_0}\mathfrak{L}^A_{\mathbf{n}}(\mathcal{W}_{1} \cap \mathcal{W}_2)\bigoplus \bigvee_{n_1 \in \mathbb{N}_0}\mathfrak{L}^{(1)}_{n_1}\left(\bigcap_{n_2 \in \mathbb{N}_0} \mathfrak{L}^{(2)}_{n_2}(\mathcal{W}_{1})\right)\bigoplus \left(\bigcap_{n_1 \in \mathbb{N}_0}\mathfrak{L}_{n_1}^{(1)}(\mathcal{H}) \right),
 \end{align}
 where $A=\{1,2\}.$
The last equality follows that  $$\bigvee_{n_1 \in \mathbb{N}_0}\mathfrak{L}^{(1)}_{n_1}\left(\bigcap_{n_2 \in \mathbb{N}_0} \mathfrak{L}^{(2)}_{n_2}(\mathcal{W}_{1})\right)=\bigcap_{n_2 \in \mathbb{N}_0} \mathfrak{L}^{(2)}_{n_2}\left(\bigvee_{n_1 \in \mathbb{N}_0}\mathfrak{L}^{(1)}_{n_1}(\mathcal{W}_{1}) \right) \subseteq \bigcap_{n_2 \in \mathbb{N}_0} \mathfrak{L}^{(2)}_{n_2}(\mathcal{H}),$$
$$\bigvee_{\mathbf{n} \in \mathbb{N}^2_0}\mathfrak{L}^A_{\mathbf{n}}(\mathcal{W}_{1} \cap \mathcal{W}_2)=\bigvee_{n_2 \in \mathbb{N}_0}\mathfrak{L}^{(2)}_{n_2}\left(\bigvee_{n_1 \in \mathbb{N}_0}\mathfrak{L}^{(1)}_{n_1}(\mathcal{W}_{1} \cap \mathcal{W}_2) \right) \subseteq \bigvee_{n_2 \in \mathbb{N}_0}\mathfrak{L}^{(2)}_{n_2}( \mathcal{W}_2) .$$
  Applying Theorem \ref{MT1} again for the  covariant representation $(\sigma, T^{(2)}),$ we obtain

 \begin{align*}
 \mathcal{H}&=\nonumber \bigvee_{n_2 \in \mathbb{N}_0}\mathfrak{L}^{(2)}_{n_2}(\mathcal{W}_{2}) \bigoplus \left(\bigcap_{n_2 \in \mathbb{N}_0}\mathfrak{L}_{n_2}^{(2)}(\mathcal{H}) \right),\\
 \end{align*}
 yields
 \begin{align}\label{Eqn1.2}
 \bigcap_{n_1 \in \mathbb{N}_0}\mathfrak{L}_{n_1}^{(1)}(\mathcal{H})= \bigvee_{n_2 \in \mathbb{N}_0}\mathfrak{L}^{(2)}_{n_2}\left(\bigcap_{n_1 \in \mathbb{N}_0}\mathfrak{L}_{n_1}^{(1)}(\mathcal{W}_{2})\right) \bigoplus  \bigcap_{\mathbf{n} \in \mathbb{N}_0^2}\mathfrak{L}_{\mathbf{n}}^{A}(\mathcal{H}),
 \end{align}
 since $\mathcal{W}_2$ is $(\sigma, T^{(1)})$-reducing and   $\bigcap_{\mathbf{n} \in \mathbb{N}_0^2}\mathfrak{L}_{\mathbf{n}}^{A}(\mathcal{H}) \subseteq \bigcap_{n_2 \in \mathbb{N}_0}\mathfrak{L}_{n_2}^{(2)}(\mathcal{H}).$
Now applying Equations (\ref{Eqn1.1}) and (\ref{Eqn1.2}), we get
\begin{align*}
\mathcal{H} &=\bigvee_{\mathbf{n} \in  \mathbb{N}^2_0}\mathfrak{L}^A_{\mathbf{n}} (\mathcal{W}_1 \cap \mathcal{W}_2) \bigoplus  \bigvee_{n_1 \in \mathbb{N}_0} \mathfrak{L}^{(1)}_{n_1} \left(\bigcap_{n_2 \in \mathbb{N}_0}\mathfrak{L}^{(2)}_{(n_2)} (\mathcal{W}_1)\right) \\&\:\:\:\:
\bigoplus \bigvee_{n_2 \in \mathbb{N}_0}\mathfrak{L}^{(2)}_{n_2}\left( \bigcap_{n_1 \in \mathbb{N}_0}\mathfrak{L}^{(1)}_{n_1}(\mathcal{W}_2)  \right)\bigoplus  \bigcap_{\mathbf{n}=  \in \mathbb{N}^2_0}\mathfrak{L}^A_{\mathbf{n}}(\mathcal{H}),~\mbox{where}~A=\{1,2\}.
\end{align*} that is, $$\mathcal{H}=\bigoplus_{A \subseteq I_m}\mathcal{H}_A,$$  where $\mathcal{H}_A $ as in Equation (\ref{HA}) for each $A \subseteq I_2$.

 For the case $m+1\leq n$. Let us assume that for each $m < n$, we have $\mathcal{H}=\bigoplus_{A \subseteq I_m}\mathcal{H}_A$, where  for each non-empty subset $A$ of $I_m$
  \begin{align}
  \mathcal{H}_A&=\bigvee_{\mathbf{n} \in\mathbb{N}_0^A}\mathfrak{L}_{\mathbf{n}}^{A}\left(\bigcap_{\mathbf{j} \in \mathbb{N}_0^{I_m \setminus A}} \mathfrak{L}_{\mathbf{J}}^{I_m \setminus A}(\mathcal{W}_A)\right)
  \end{align}
  and when $A$ is an empty set,
 $$\mathcal{H}_A=\bigcap_{\mathbf{n} \in \mathbb{N}_0^m}\mathfrak{L}_{\mathbf{n}}^{I_m}(\mathcal{H}).$$
 We want to prove this result for  $m+1\leq k,$ that is, $$\mathcal{H}=\bigoplus_{A \subseteq I_{m+1}}\mathcal{H}_A.$$ Since $\mathcal{W}_A$ is $(\sigma, T^{(m+1)})$-reducing subspace for all non-empty subset $A \subseteq I_m,$ Theorem \ref{MT1}  for $(\sigma, T^{(m+1)})|_{\mathcal{W}_A}$ provides us
 \begin{align}
 \mathcal{W}_A &= \nonumber \bigvee_{n_{m+1} \in \mathbb{N}_0}\mathfrak{L}_{n_{m+1}}^{(m+1)}(\mathcal{W}_A \ominus \mathfrak{L}^{(m+1)}(\mathcal{W}_{m+1})) \bigoplus \left(\bigcap_{n_{m+1}\in \mathbb{N}_0}\mathfrak{L}_{n_{m+1}}^{(m+1)}(\mathcal{W}_A) \right)
 \\&= \nonumber \bigvee_{n_{m+1} \in \mathbb{N}_0}\mathfrak{L}_{n_{m+1}}^{(m+1)}(\mathcal{W}_A \cap \mathcal{W}_{m+1}) \bigoplus \left(\bigcap_{n_{m+1}\in \mathbb{N}_0}\mathfrak{L}_{n_{m+1}}^{(m+1)}(\mathcal{W}_A) \right).
 \end{align}

  Note that $\bigcap_{\mathbf{j} \in \mathbb{N}_0^{I_m \setminus A}}\mathfrak{L}_{\mathbf{J}}^{I_m \setminus A}(\mathcal{W}_B) \subseteq \mathcal{W}_{m+1},$ where $B=A\cup \{m+1\}.$ Then $$\bigvee_{\mathbf{n} \in\mathbb{N}_0^B}\mathfrak{L}_{\mathbf{n}}^{B}\left( \bigcap_{\mathbf{j} \in \mathbb{N}_0^{I_m \setminus A}}\mathfrak{L}_{\mathbf{J}}^{I_m \setminus A}(\mathcal{W}_B)\right) \subseteq \bigvee_{n_{m+1} \in \mathbb{N}_0}\mathfrak{L}_{n_{m+1}}^{(m+1)}(\mathcal{W}_{m+1}), $$ $$\bigvee_{\mathbf{n} \in\mathbb{N}_0^A}\mathfrak{L}_{\mathbf{n}}^{A}\left(\bigcap_{\mathbf{j} \in \mathbb{N}_0^{I_{m+1} \setminus A}}\mathfrak{L}_{\mathbf{J}}^{I_{m+1} \setminus A}(\mathcal{W}_A)\right) \subseteq \bigcap_{n_{m+1}\in \mathbb{N}_0}\mathfrak{L}_{n_{m+1}}^{(m+1)}(\mathcal{H}),$$ and hence it follows that

 \begin{align*}
 \mathcal{H}_A &=\bigvee_{\mathbf{n} \in\mathbb{N}_0^A}\mathfrak{L}_{\mathbf{n}}^{A}\left(\bigcap_{\mathbf{j} \in \mathbb{N}_0^{I_m \setminus A}}\mathfrak{L}_{\mathbf{J}}^{I_m \setminus A}(\mathcal{W}_A) \right)
 \\&=\bigvee_{\mathbf{n} \in\mathbb{N}_0^A}\mathfrak{L}_{\mathbf{n}}^{A}\left(\bigcap_{\mathbf{j} \in \mathbb{N}_0^{I_m \setminus A}}\mathfrak{L}_{\mathbf{J}}^{I_m \setminus A}\left(\bigvee_{n_{m+1} \in \mathbb{N}_0}\mathfrak{L}_{n_{m+1}}^{(m+1)}(\mathcal{W}_B) \bigoplus \left(\bigcap_{n_{m+1}\in \mathbb{N}_0}\mathfrak{L}_{n_{m+1}}^{(m+1)}(\mathcal{W}_A) \right)\right) \right)
  \\&=\bigvee_{\mathbf{n} \in\mathbb{N}_0^B}\mathfrak{L}_{\mathbf{n}}^{B}\left( \bigcap_{\mathbf{j} \in \mathbb{N}_0^{I_m \setminus A}}\mathfrak{L}_{\mathbf{J}}^{I_m \setminus A}(\mathcal{W}_B)\right)\bigoplus \left(\bigvee_{\mathbf{n} \in\mathbb{N}_0^A}\mathfrak{L}_{\mathbf{n}}^{A}\left(\bigcap_{\mathbf{j} \in \mathbb{N}_0^{I_{m+1} \setminus A}}\mathfrak{L}_{\mathbf{J}}^{I_{m+1} \setminus A}(\mathcal{W}_A)\right)\right),
\end{align*}

 Using Theorem \ref{MT1}, for the covariant representation $(\sigma, T^{(m+1)}),$ we have

$$ \mathcal{H}=\bigvee_{n_{m+1} \in \mathbb{N}_0}\mathfrak{L}^{(m+1)}_{n_{m+1}}(\mathcal{W}_{m+1}) \bigoplus \left(\bigcap_{n_{m+1} \in \mathbb{N}_0}\mathfrak{L}_{n_{m+1}}^{(m+1)}(\mathcal{H}) \right). $$
When $A$ is an empty set,
\begin{align*}
\mathcal{H}_A &=\bigcap_{\mathbf{n} \in \mathbb{N}_0^A}\mathfrak{L}^{A}_{\mathbf{n}}(\mathcal{H})
\\&=\bigcap_{\mathbf{n} \in \mathbb{N}_0^A}\mathfrak{L}^{A}_{\mathbf{n}}\left(\bigvee_{n_{m+1} \in \mathbb{N}_0}\mathfrak{L}^{(m+1)}_{n_{m+1}}(\mathcal{W}_{m+1}) \bigoplus \left(\bigcap_{n_{m+1} \in \mathbb{N}_0}\mathfrak{L}_{n_{m+1}}^{(m+1)}(\mathcal{H}) \right)\right)
\\&=\bigvee_{n_{m+1} \in \mathbb{N}_0}\mathfrak{L}_{n_{m+1}}^{(m+1)}\left(\bigcap_{\mathbf{n} \in \mathbb{N}^A_0}\mathfrak{L}^{A}_{\mathbf{n}}(\mathcal{W}_{m+1})  \right) \bigoplus \left( \bigoplus_{\mathbf{n}\in \mathbb{N}_0^B}\mathfrak{L}^{B}_{\mathbf{n}}(\mathcal{H})\right).
\end{align*}
It follows from the above orthogonal decomposition of $\mathcal{H}$ that  the covariant representation $(\sigma,T^{(i_1)}, \dots, T^{(i_p)})|_{\mathcal{H}_A}$ has generating wandering subspace property of the product system $\mathbb{E}_A$ over $\mathbb{N}^A_0$ and  $(\sigma, T^{(i)})|_{\mathcal{H}_A}$ is an isometric and fully co-isometric covariant representation for all $i \in I_m \setminus A$ (cf. Theorem \ref{MT1}). The uniqueness also follows immediately from the uniqueness of Theorem \ref{MT1}.
\end{proof}

\begin{remark}\label{RM1}\begin{enumerate}
		\item Let $(\sigma, T^{(1)}, \dots,T^{(k)})$ be a completely bounded covariant representation of $\mathbb{E}$ on  the Hilbert space $\mathcal{H}.$
		Let $\mathcal{W}$ be a wandering subspace for  $(\sigma, T^{(1)}, \dots,T^{(k)})$, that is,
		$\mathcal{W} \:\bot \:\mathfrak{L}_{\mathbf{n}}^{k}(\mathcal{W}), \:\:  \mathbf{n} \in \mathbb{N}_0^k \backslash \{0\}.$  Let $\mathcal{K}$ be the smallest closed $(\sigma, T^{(1)}, \dots,T^{(k)})$-invariant subspace containing $\mathcal{W},$ that is,
		$$\mathcal{K}=\bigvee_{\mathbf{n} \in \mathbb{N}^k_0}\mathfrak{L}_{\mathbf{n}}^{k}(\mathcal{W}).$$ Then the wandering subspace is unique. Indeed, the uniqueness of the wandering subspace follows from $$\mathcal{K} \ominus \left( \sum_{i=1}^k \mathfrak{L}^i(\mathcal{K})\right)=\left( \mathcal{W} \bigoplus \bigvee_{\mathbf{n} \in \mathbb{N}^k_0\setminus \{0\}}\mathfrak{L}_{\mathbf{n}}^{k}(\mathcal{W}) \right) \ominus \left(\bigvee_{\mathbf{n} \in \mathbb{N}^k_0 \setminus \{0\}} \mathfrak{L}_{\mathbf{n}}^{k}(\mathcal{W}) \right) =\mathcal{W}.$$
		Therefore $\mathcal{W}=\bigcap_{i=1}^k \mbox{ker}(I_{E_i} \otimes P_{\mathcal{K}}){\widetilde T}^{(i)^*}|_{\mathcal{K}},$ where $P_{\mathcal{K}}$ is projection of $\mathcal{H}$ on $\mathcal{K}.$
		\item Suppose  $(\sigma, T^{(1)}, \dots,T^{(k)})$ be a  doubly commuting, isometric covariant representation of $\mathbb{E}$ on  the Hilbert space $\mathcal{H}.$ Let $\mathbf{n}=(n_1, \cdots , n_k)$ and $ \mathbf{m}=(m_1,  \cdots ,m_k) \in \mathbb{N}^k_0$ with $\mathbf{n} \neq \mathbf{m}, \:  n_i \neq m_i$ for some $i \in I_k.$ Let  $\xi_{\mathbf{n}} \in E(\mathbf{n}), \eta_{\mathbf{m}} \in E(\mathbf{m}) $ and $h_1, h_2 \in \mathcal{W}.$ Assume $n_i < m_i$ and if $\mathcal{K}$ is in the above equation such that $\mathcal{K}$ is $(\sigma, T^{(1)}, \dots,T^{(k)})$-reducing subspace,  then we have
\begin{align*}
&\langle \wt{T}_{\mathbf{n}}(\xi_{\mathbf{n}} \otimes h_1), \wt{T}_{\mathbf{m}}(\eta_{\mathbf{m}} \otimes h_2)\rangle \\
 &=\langle (I_{E_i^{\otimes n_i}} \otimes \wt{T}^{(i)^*}_{m_i-n_i}\wt{T}_{\mathbf{n}-n_ie_i})(\xi_{\mathbf{n}} \otimes h_1), (I_{E_i^{\otimes m_i}} \otimes \wt{T}_{\mathbf{m}-m_ie_i})(\eta_{\mathbf{m}} \otimes h_2)\rangle \\
 &= \langle \left(I_{E_i^{\otimes n_i}} \otimes (I_{E_i^{\otimes (m_i-ni)}} \otimes \wt{T}_{\mathbf{n}-n_ie_i})(t_{\mathbf{n}-n_ie_i, \mathbf{(n-m)}e_i} \otimes I)(I_{E(\mathbf{n}-n_ie_i)} \otimes \wt{T}^{(i)^*}_{m_i-n_i})\right)(\xi_{\mathbf{n}} \otimes h_1),\\
 &\hspace{1cm} (I_{E_i^{\otimes m_i}} \otimes \wt{T}_{\mathbf{m}-m_ie_i})(\eta_{\mathbf{m}} \otimes h_2)\rangle \\
 &=0   \hspace{3cm} ( \because h_1 \in \mbox{ker}{\widetilde T}^{(i)^*}|_{\mathcal{K}}).
\end{align*}
Thus, $\mathfrak{L}_{\mathbf{n}}^{k}(\mathcal{W}) \bot \mathfrak{L}_{\mathbf{m}}^{k}(\mathcal{W})$ for all distinct $\mathbf{n}, \mathbf{m} \in \mathbb{N}^k_0.$ Hence
$$\mathcal{K}=\bigvee_{\mathbf{n} \in \mathbb{N}^k_0}\mathfrak{L}_{\mathbf{n}}^{k}(\mathcal{W})=\bigoplus_{\mathbf{n} \in \mathbb{N}^k_0}\mathfrak{L}_{\mathbf{n}}^{k}(\mathcal{W}).$$	\end{enumerate}
\end{remark}
The following statement is an analogue of generating wandering subspace property of the covariant representations and we provide a different proof of \cite[Corollary 4.8]{HS19}, which is a generalization of \cite[Corollary 2.4]{CDSS14}.
\begin{corollary}\label{Cor1}
Let $\be$ be a product system of $C^*$-correspondences over $\bn_0^k.$  Let $(\sigma, T^{(1)}, \ldots,T^{(k)})$ be as in Theorem \ref{MT2} such that  $(\sigma, T^{(j)})$   is analytic, for $1 \leq i \leq k.$ Let $\mathcal{K}$ be a $(\sigma, T^{(1)}, \ldots,T^{(k)})$-reducing subspace. Then $(\sigma, T^{(1)}, \ldots,T^{(k)})|_{\mathcal{K}}$ has generating wandering subspace property, $$\mathcal{K}=\bigvee_{\mathbf{n} \in \mathbb{N}_0^k}\mathfrak{L}_{\mathbf{n}}^{I_k}(\mathcal{W}),$$ for some wandering subspace $\mathcal{W}$ for $(\sigma, T^{(1)}, \ldots,T^{(k)})|_{\mathcal{K}}.$ Moreover, $\mathcal{W}$ is unique, in fact $\mathcal{W}=\bigcap_{i=1}^k \mbox{ker}{\widetilde T}^{(i)^*}|_{\mathcal{K}}.$  In particular, $$\mathcal{W}_{I_k}=\bigcap_{i=1}^k\mbox{ker}\widetilde{T}^{(i)^*},$$  is a generating wandering subspace for $(\sigma, T^{(1)}, \ldots,T^{(k)}),$  that is, $$\mathcal{H}=\bigvee_{\mathbf{n} \in \mathbb{N}_0^k}\mathfrak{L}_{\mathbf{n}}^{I_k}(\mathcal{W}_{I_k}).$$
\end{corollary}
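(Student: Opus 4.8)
The plan is to obtain everything from the orthogonal decomposition of Theorem \ref{MT2} by using analyticity to annihilate all but the top summand. Since the ``in particular'' assertion is the special case $\mathcal{K}=\mathcal{H}$, I would work with a general $(\sigma,T^{(1)},\ldots,T^{(k)})$-reducing subspace $\mathcal{K}$ and specialize at the end. First I would check that the restriction $(\sigma,T^{(1)},\ldots,T^{(k)})|_{\mathcal{K}}$ is again a doubly commuting completely bounded covariant representation of $\mathbb{E}$ satisfying the same hypothesis (1), (2) or (3): because $\mathcal{K}$ is reducing, each operator appearing in \eqref{rep}, \eqref{doubly} and in the three inequalities leaves $\mathcal{K}$ and $\mathcal{K}^{\perp}$ invariant, so the relations and inequalities on $\mathcal{H}$ restrict verbatim to vectors whose $\mathcal{H}$-component lies in $\mathcal{K}$. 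Moreover each $(\sigma,T^{(i)})|_{\mathcal{K}}$ stays analytic, since $\widetilde{T}^{(i)}_n(E_i^{\otimes n}\otimes\mathcal{K})\subseteq\widetilde{T}^{(i)}_n(E_i^{\otimes n}\otimes\mathcal{H})$ forces $\bigcap_{n\ge1}\widetilde{T}^{(i)}_n(E_i^{\otimes n}\otimes\mathcal{K})\subseteq\bigcap_{n\ge1}\widetilde{T}^{(i)}_n(E_i^{\otimes n}\otimes\mathcal{H})=\{0\}$. Hence Theorem \ref{MT2} applies with $m=k$ and gives $\mathcal{K}=\bigoplus_{A\subseteq I_k}\mathcal{K}_A$, where $\mathcal{K}_A$ is given by \eqref{HA} with $\mathcal{H}$ replaced by $\mathcal{K}$.

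The crux is to show $\mathcal{K}_A=\{0\}$ for every $A\neq I_k$. Fix such an $A$ and choose $i\in I_k\setminus A$. By Theorem \ref{MT2} the representation $(\sigma,T^{(i)})|_{\mathcal{K}_A}$ is isometric and fully co-isometric, so $\widetilde{T}^{(i)}$ restricts to a unitary from $E_i\otimes\mathcal{K}_A$ onto $\mathcal{K}_A$; by the factorization \eqref{eqnn} each $\widetilde{T}^{(i)}_n$ is then surjective on $\mathcal{K}_A$, giving $\bigcap_{n\ge1}\widetilde{T}^{(i)}_n(E_i^{\otimes n}\otimes\mathcal{K}_A)=\mathcal{K}_A$. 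On the other hand $\mathcal{K}_A$ is $(\sigma,T^{(i)})$-reducing and, by the inclusion of ranges used above, analyticity of $(\sigma,T^{(i)})$ forces $\bigcap_{n\ge1}\widetilde{T}^{(i)}_n(E_i^{\otimes n}\otimes\mathcal{K}_A)=\{0\}$. Comparing the two identities yields $\mathcal{K}_A=\{0\}$. (For $k=1$ the corollary is just the analytic case of Theorem \ref{MT1}.)

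Thus only the top summand survives, $\mathcal{K}=\mathcal{K}_{I_k}$. Specializing \eqref{HA} to $A=I_k$, the inner intersection runs over $\mathbb{N}_0^{\emptyset}$ and collapses to $\mathcal{W}_{I_k}\cap\mathcal{K}$, so that
$$\mathcal{K}=\bigvee_{\mathbf{n}\in\mathbb{N}_0^k}\mathfrak{L}_{\mathbf{n}}^{I_k}(\mathcal{W}),\qquad \mathcal{W}:=\bigcap_{i=1}^k\bigl(\mathcal{K}\ominus\widetilde{T}^{(i)}(E_i\otimes\mathcal{K})\bigr)=\bigcap_{i=1}^k\mbox{ker}\,\widetilde{T}^{(i)^*}|_{\mathcal{K}},$$
where the last equality uses $\mathcal{K}\ominus\widetilde{T}^{(i)}(E_i\otimes\mathcal{K})=\bigl(\mbox{ran}\,\widetilde{T}^{(i)}|_{\mathcal{K}}\bigr)^{\perp}=\mbox{ker}\,\widetilde{T}^{(i)^*}|_{\mathcal{K}}$ (adjoints respecting $\mathcal{K}$ by reducibility). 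This already exhibits $\mathcal{W}$ as a generating wandering subspace. For uniqueness I would invoke Remark \ref{RM1}(1), which identifies the generating wandering subspace as $\bigcap_{i=1}^k\mbox{ker}\,(I_{E_i}\otimes P_{\mathcal{K}})\widetilde{T}^{(i)^*}|_{\mathcal{K}}$; since $\mathcal{K}$ reducing gives $\widetilde{T}^{(i)^*}(\mathcal{K})\subseteq E_i\otimes\mathcal{K}$, the projection is superfluous and this formula reduces to $\bigcap_{i=1}^k\mbox{ker}\,\widetilde{T}^{(i)^*}|_{\mathcal{K}}$, matching $\mathcal{W}$. Taking $\mathcal{K}=\mathcal{H}$ yields the ``in particular'' statement with $\mathcal{W}_{I_k}=\bigcap_{i=1}^k\mbox{ker}\,\widetilde{T}^{(i)^*}$.

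I expect the collapsing step of the second paragraph to be the main obstacle: one must verify cleanly both that analyticity descends to the reducing summands $\mathcal{K}_A$ and that the isometric-and-fully-co-isometric property genuinely produces the full range identity $\bigcap_{n\ge1}\widetilde{T}^{(i)}_n(E_i^{\otimes n}\otimes\mathcal{K}_A)=\mathcal{K}_A$, so that the two computed intersections are irreconcilable unless $\mathcal{K}_A$ is trivial. The remaining work—checking that the three hypotheses and the doubly commuting relation restrict to $\mathcal{K}$, handling the empty-index convention in \eqref{HA}, and reducing the uniqueness formula of Remark \ref{RM1}—is routine.
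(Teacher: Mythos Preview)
Your proposal is correct and follows essentially the same route as the paper: both restrict to $\mathcal{K}$, verify that analyticity and the hypotheses pass to the restriction, apply Theorem \ref{MT2} with $m=k$, and then use analyticity to force $\mathcal{K}_A=\{0\}$ for every $A\neq I_k$, leaving only the $A=I_k$ summand. The only cosmetic difference is that the paper kills the proper summands by observing directly from \eqref{HA} that $\bigcap_{\mathbf{j}\in\mathbb{N}_0^{I_k\setminus A}}\mathfrak{L}_{\mathbf{J}}^{I_k\setminus A}(\mathcal{W}_A)\subseteq\bigcap_{n_i}\mathfrak{L}_{n_i}^i(\mathcal{H})=\{0\}$, whereas you reach the same conclusion via the isometric-and-fully-co-isometric description of $(\sigma,T^{(i)})|_{\mathcal{K}_A}$; the uniqueness via Remark \ref{RM1}(1) is handled the same way.
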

\begin{proof}
Since $(\sigma, T^{(1)}, \ldots,T^{(k)})$ is analytic, then for each $i \in I_k$  $$\bigcap_{n_i \in \mathbb{N}_0}\mathfrak{L}_{n_i}^i(\mathcal{K}) \subseteq \bigcap_{n_i \in \mathbb{N}_0}\mathfrak{L}_{n_i}^i(\mathcal{H})=\{0\}.$$
This implies that, $(\sigma, T^{(1)}, \ldots,T^{(k)})|_{\mathcal{K}}$ is analytic and it satisfies hypothesis of the Theorem \ref{MT2}. Since $\mathcal{K}$ is $(\sigma, T^{(1)}, \ldots,T^{(k)})$-reducing subspace, without loss of generality we can assume that $\mathcal{K}=\mathcal{H}.$ Let $A$ be the proper subset of $I_k$ and let $i \in A \setminus I_k,$ we have
 $$\bigcap_{\mathbf{j} \in \mathbb{N}_0^{I_m \setminus A}} \mathfrak{L}_{\mathbf{J}}^{I_m \setminus A}(\mathcal{W}_A) \subseteq\bigcap_{n_i \in \mathbb{N}_0}\mathfrak{L}_{n_i}^i(\mathcal{H})=\{0\}, $$ where $\mathcal{W}_A$ as in Equation (\ref{WA}).
 Apply previous equation to  Equation (\ref{HA}) in  Theorem \ref{MT2}, we get $\mathcal{H}_A=\{0\}.$  Therefore, if $A$ is empty set then $\mathcal{H}_A=\mathcal{H}.$
 This completes the proof.
 \end{proof}

The following definition  of induced representation is a generalization of the multiplication operators $M_{z_i} \otimes I_{\mathcal{H}}$ on the vector valued Hardy space $H^2_{\mathcal{H}}(\mathbb{D}^k).$
\begin{definition} Let $\mathbb{E}$ be the product system over $\mathbb{N}^k_0,$ and let $\pi$ be a representation of $\mathcal{M}$ on a Hilbert space $\mathcal{K}.$ Define the {\rm Fock module} of  $\mathbb{E},$
$$\mathcal{F}(\mathbb{E}):=\bigoplus_{\mathbf{n} \in \mathbb{N}^k_0}\mathbb{E}(\mathbf{n}).$$ Note that $\mathcal{F}(\mathbb{E})$ is a $C^*$ correspondence over $\mathcal{M}.$  Define a completely contractive covariant representation $(\rho, S^{(i)})$ of $E_i$ on the Hilbert space $\mathcal{F}(\mathbb{E})\otimes_{\pi} \mathcal{K}$ (cf. \cite{SZ08}) by  $$\rho(a):=\phi_{\infty}(a) \otimes I_{\mathcal{K}}, \:\:a \in \mathcal{M}$$ and $$S^{(i)}(\xi_i)=T_{{\xi}_i} \otimes I_{\mathcal{K}},\: i\in \{1, \cdots,k\}, \xi_i \in E_i,$$ where $T_{\xi_i}$  denotes the creation operator on $\mathcal{F}(\mathbb{E})$ determined by $\xi_i,$ that is, $T_{\xi_i}(\eta)=\xi_i \otimes \eta,$ where $\eta \in \mathcal{F}(\mathbb{E})$ and  $\phi_{\infty}$  denotes the canonical left action of $\mathcal{M}$ on $\mathcal{F}(\mathbb{E}).$ It is easy to see that the above representation $(\rho,S^{(1)}, \dots ,S^{(k)})$ is doubly commuting isometric covariant representation of $\mathbb{E}$ on the Hilbert space $\mathcal{F}(\mathbb{E})\otimes_{\pi} \mathcal{K}$ and it is called {\rm induced representation} of $\mathbb{E}$ induced by $\pi.$ Any covariant representation of $\mathbb{E}$ which is isomorphic to $(\rho,S^{(1)}, \dots ,S^{(k)})$ is called an {\rm induced representation}.\end{definition}

The following corollary is a generalization of \cite[Theorem 2.4]{SZ08}:

\begin{corollary}\label{main1}
	Let $\be$ be a product system of $C^*$-correspondences over $\bn_0^k.$ Let $(\sigma, T^{(1)}, \ldots,
	T^{(k)})$ be a doubly commuting isometric covariant representation of $\be$ on a Hilbert space $\mathcal H.$  Then for   $2 \leq m \leq k,$ there exists $2^m$  $(\sigma, T^{(1)}, \ldots,	T^{(m)})$-reducing subspaces $\{\mathcal{H}_{\mathcal{A}} \: : \: A \subseteq I_{m} \}$ such that $\mathcal{H}=\bigoplus_{A \subseteq I_m}\mathcal{H}_A$   and for each  $A=\{i_1, \dots, i_p\} \subseteq I_m: \: (\sigma, T^{(i_1)}, \dots,T^{(i_p)})|_{\mathcal{H}_A} $  is an induced representation  of the product system $\mathbb{E}_A$ over $\mathbb{N}^A_0$ given by the family of $C^*$-correspondence $\{E_{i_1}, \dots, E_{i_p}\}$  and $(\sigma, T^{(i)})|_{\mathcal{H}_A}$ is  isometric and fully co-isometric covariant representation whenever $i \in I_m \setminus A.$  Moreover, the above decomposition is unique and
 \begin{align}\label{HA1}
  \mathcal{H}_A&=\bigoplus_{\mathbf{n} \in\mathbb{N}_0^A}\mathfrak{L}_{\mathbf{n}}^{A}\left(\bigcap_{\mathbf{j} \in \mathbb{N}_0^{I_m \setminus A}} \mathfrak{L}_{\mathbf{J}}^{I_m \setminus A}(\mathcal{W}_A)\right).
  \end{align}
\end{corollary}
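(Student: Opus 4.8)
The plan is to deduce the corollary from Theorem \ref{MT2} after checking its hypotheses, and then to refine the conclusion using the isometric assumption. First I would observe that since $(\sigma, T^{(1)}, \dots, T^{(k)})$ is isometric, each $\wT^{(i)}$ is an isometry, whence $\wT^{(i)}_2 = \wT^{(i)}(I_{E_i} \otimes \wT^{(i)})$ and $I_{E_i} \otimes \wT^{(i)}$ are isometries as well. Inserting these into condition (1) of Theorem \ref{MT2} turns all three norms into $\|\eta_i \otimes h\|$, so the concavity inequality holds (in fact with equality). Thus Theorem \ref{MT2} applies and produces the orthogonal decomposition $\mathcal{H} = \bigoplus_{A \subseteq I_m} \mathcal{H}_A$ into $(\sigma, T^{(1)}, \dots, T^{(m)})$-reducing subspaces, with $\mathcal{H}_A$ as in \eqref{HA}, such that each $(\sigma, T^{(i_1)}, \dots, T^{(i_p)})|_{\mathcal{H}_A}$ has the generating wandering subspace property and $(\sigma, T^{(i)})|_{\mathcal{H}_A}$ is isometric and fully co-isometric for $i \in I_m \setminus A$.

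The next step is to upgrade the join $\bigvee$ in \eqref{HA} to the orthogonal direct sum $\bigoplus$ of \eqref{HA1}. Writing $\mathcal{W} := \bigcap_{\mathbf{j} \in \mathbb{N}_0^{I_m \setminus A}} \mathfrak{L}_{\mathbf{J}}^{I_m \setminus A}(\mathcal{W}_A)$ for the generating wandering subspace attached to $A$, which is $\sigma(\mathcal{M})$-invariant, I would note that $\mathcal{H}_A$ is reducing and equals the smallest invariant subspace containing $\mathcal{W}$ for the doubly commuting isometric sub-representation $(\sigma, T^{(i_1)}, \dots, T^{(i_p)})$. Hence the computation of Remark \ref{RM1}(2) applies and yields $\mathfrak{L}_{\mathbf{n}}^A(\mathcal{W}) \perp \mathfrak{L}_{\mathbf{m}}^A(\mathcal{W})$ for all distinct $\mathbf{n}, \mathbf{m} \in \mathbb{N}_0^A$, converting the join into an orthogonal sum and giving \eqref{HA1}.

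The heart of the argument, and the step I expect to be the main obstacle, is to identify $(\sigma, T^{(i_1)}, \dots, T^{(i_p)})|_{\mathcal{H}_A}$ with the induced representation of $\mathbb{E}_A$ induced by $\pi := \sigma|_{\mathcal{W}}$. I would construct a candidate unitary $U : \mathcal{F}(\mathbb{E}_A) \otimes_\pi \mathcal{W} \to \mathcal{H}_A$ by setting $U(\xi \otimes h) := \wT_{\mathbf{n}}^A(\xi \otimes h)$ on each homogeneous summand $\mathbb{E}_A(\mathbf{n}) \otimes_\pi \mathcal{W}$. In the isometric case $\wT_{\mathbf{n}}^A$ is an isometry with range $\mathfrak{L}_{\mathbf{n}}^A(\mathcal{W})$, and since both $\mathcal{F}(\mathbb{E}_A) \otimes_\pi \mathcal{W} = \bigoplus_{\mathbf{n} \in \mathbb{N}_0^A} \mathbb{E}_A(\mathbf{n}) \otimes_\pi \mathcal{W}$ and (by the previous step) $\mathcal{H}_A$ are orthogonal direct sums indexed by $\mathbf{n}$, these pieces assemble into a well-defined surjective isometry. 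It then remains to verify the intertwining relations $U\rho(a) = \sigma(a)U$ and $U S^{(i_j)}(\xi) = T^{(i_j)}(\xi) U$: the first is immediate from $\rho(a) = \phi_{\infty}(a) \otimes I$ and covariance, while the second amounts to unwinding $S^{(i_j)}(\xi)(\eta \otimes h) = (\xi \otimes \eta) \otimes h$ against the factorization \eqref{eqnn} of $\wT_{\mathbf{n}}^A$. The only genuinely technical point is bookkeeping the reordering encoded by the isomorphisms $t_{i,j}$ when a vector $\xi \in E_{i_j}$ is prepended to $\eta \in \mathbb{E}_A(\mathbf{n})$; this is where the product-system commutation relations must be tracked carefully. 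Finally, for $i \in I_m \setminus A$ nothing new is needed: $\wT^{(i)}|_{\mathcal{H}_A}$ is an isometry by hypothesis and co-isometric by Theorem \ref{MT2}, hence unitary, which is exactly the isometric-and-fully-co-isometric conclusion; and uniqueness is inherited from the uniqueness clause of Theorem \ref{MT2}.
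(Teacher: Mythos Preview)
Your proposal is correct and follows essentially the same route as the paper: apply Theorem \ref{MT2} (the paper does not spell out which condition holds, whereas you verify concavity explicitly), invoke Remark \ref{RM1}(2) to upgrade the join to an orthogonal sum, and then build the same unitary $U(\xi_{\mathbf{n}} \otimes h) = \wT^A_{\mathbf{n}}(\xi_{\mathbf{n}} \otimes h)$ from $\mathcal{F}(\mathbb{E}_A) \otimes_{\sigma|_{\mathcal W}} \mathcal W$ onto $\mathcal H_A$ to exhibit the induced representation. The paper's proof is slightly terser on the intertwining verification and the $t_{i,j}$ bookkeeping, but the arguments are the same.
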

\begin{proof}
	If $(\sigma, T^{(1)}, \dots,T^{(k)})$ be an isometric doubly commuting covariant representation of $\mathbb{E}$ on  the Hilbert space $\mathcal{H},$ then it satisfies hypothesis of the Theorem \ref{MT2}.  By (2) of the Remark \ref{RM1}, then  $\mathcal{H}_A$ as in Theorem \ref{MT2} can be written as
	\begin{align*}
	\mathcal{H}_A&=\bigvee_{\mathbf{n} \in\mathbb{N}_0^A}\mathfrak{L}_{\mathbf{n}}^{A}\left(\bigcap_{\mathbf{j} \in \mathbb{N}_0^{I_m \setminus A}} \mathfrak{L}_{\mathbf{J}}^{I_m \setminus A}(\mathcal{W}_A)\right)=\bigoplus_{\mathbf{n} \in\mathbb{N}_0^A}\mathfrak{L}_{\mathbf{n}}^{A}\left(\bigcap_{\mathbf{j} \in \mathbb{N}_0^{I_m \setminus A}} \mathfrak{L}_{\mathbf{J}}^{I_m \setminus A}(\mathcal{W}_A)\right).
	\end{align*}
Moreover, $\bigcap_{\mathbf{j} \in \mathbb{N}_0^{I_m \setminus A}} \mathfrak{L}_{\mathbf{J}}^{I_m \setminus A}(\mathcal{W}_A)=\bigcap_{i \in A}\mbox{ker}\wt{T}^{(i)^*}|_{\mathcal{H}_A}$ and  is $\sigma$-invariant.
Define a new representation $\sigma_1$ of $\mathcal{M}$ on the Hilbert space $\mathcal{K}$ by $\sigma_1(a)=\sigma(a)|_{\mathcal{K}},$ where $\mathcal{K}=\bigcap_{i \in A}\mbox{ker}\wt{T}^{(i)^*}|_{\mathcal{H}_A}.$ Let $ A=\{i_1, i_2, \dots, i_p\} \subseteq I_m$ and let $(\rho, S^{(i_1)}, \dots, S^{(i_p)})$ be an induced representation of the product system $\mathbb{E}_A$ over $\mathbb{N}^A_0$ induced by $\sigma_1.$
  Define the operator $U: \mathcal{F}(\mathbb{E}_A) \otimes_{\sigma_1}\mathcal{K}  \to \mathcal{H}_A$ by $U(\xi_{\mathbf{n}} \otimes h)=\widetilde{T}^A_{\mathbf{n}}(\xi_{\mathbf{n}} \otimes h), \:\xi_{\mathbf{n}} \in \mathbb{E}_A (\mathbf{n})$ and $h \in \mathcal{K}.$ This shows that $U$ is unitary operator  and
\begin{align*}
US^{(i_j)}(\eta_{i_j})(\xi_{\mathbf{n}} \otimes h)&=U(\eta_{i_j} \otimes \xi_{\mathbf{n}} \otimes h)={T}^{(i_j)}(\eta_{i_j})U(\xi_{\mathbf{n}} \otimes h)\\
U\rho(a)(\xi_{\mathbf{n}} \otimes h)&=U(\phi^n(a)\xi_{\mathbf{n}} \otimes h )=\sigma(a)U(\xi_{\mathbf{n}} \otimes h),
\end{align*}
where $\phi_{\mathbf{n}}$ is left action of $E_A({\mathbf{n}})$ and,  for all $\eta_{i_j} \in E_{i_j},\: \xi_{\mathbf{n}} \in E_A(\mathbf{n}) ,h \in \mathcal{K}$ and $1 \leq j \leq p.$ Hence $(\sigma, T^{(i_1)}, \dots, T^{(i_p)})|_{\mathcal{H}_A}$ is an induced representation.
\end{proof}

From the above Corollary, if $(\sigma, T)$ be an isometric covariant representation of $E$ on the Hilbert space $\mathcal{H}$. Then  $(\sigma, T)$ is analytic if and only if $(\sigma,T)$ is an induced representation. Let $(\sigma, T^{(1)}, \ldots,T^{(k)})$ be an induced representation of  $\be$, then for each $i, \: 1 \leq i \leq k,$ $(\sigma, T^{(i)})$ is also an induced representation. But, the converse not true. The following theorem gives a conceptual characterization of induced representation   and generating wandering subspace for the isometric covariant representation of the product system $\mathbb{E}$ over $\mathbb{N}^k_0, \:  k \geq 2.$
 \begin{theorem}\label{MT3}
 Let $(\sigma, T^{(1)}, \ldots,T^{(k)})$ be an isometric covariant representation of the product systems $\be$ over $\mathbb{N}^k_0$ on the Hilbert space $\mathcal H.$ Then the following conditions are equivalent:
\begin{enumerate}
  \item There exists a wandering subspace $\mathcal{W}$ for $(\sigma, T^{(1)}, \ldots,T^{(k)})$ such that $$\mathcal{H}=\bigoplus_{\mathbf{n} \in \mathbb{N}^k_0}\mathfrak{L}_{\mathbf{n}}^{I_k}(\mathcal{W}).$$
  \item For every $j \in I_k,$ $(\sigma, T^{(j)})$ is an induced representation of $E_j$ and $(\sigma, T^{(1)}, \ldots,T^{(k)})$ is doubly commuting.
   \item There exists $j \in I_k$ such that $(\sigma, T^{(j)})$ is an induced representation and the wandering subspace for $(\sigma, T^{(j)})$ is
     $$\mathcal{W}_j=\bigoplus_{\mathbf{n} \in \mathbb{N}^k_0, n_j=0}\mathfrak{L}_{\mathbf{n}}^{I_k}\left(\bigcap_{i=1}^k\mathcal{W}_i\right).$$
   \item  $\mathcal{W}_{I_k}$ is a wandering subspace for  $(\sigma, T^{(1)}, \ldots,T^{(k)})$ and \\ $\mathcal{H}=\bigoplus_{\mathbf{n} \in \mathbb{N}^k_0}\mathfrak{L}_{\mathbf{n}}^{I_k}(\mathcal{W}_{I_k}).$
   \item $(\sigma, T^{(1)}, \ldots,T^{(k)})$ is  isomorphic to an induced representation $(\rho, S^{(1)}, $ $\cdots ,S^{(k)})$  induced by some representation $\pi$ on $\mathcal{K}$  with  $dim \mathcal{K}=dim \mathcal{W}_{I_k}$.\end{enumerate}
 \end{theorem}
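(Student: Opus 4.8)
The plan is to show the five assertions are equivalent through a web of implications anchored at $(4)$, constantly using that an isometric doubly commuting representation already carries the orthogonality recorded in Remark \ref{RM1}(2). I would first settle $(1)\Leftrightarrow(4)$. The implication $(4)\Rightarrow(1)$ is immediate upon taking $\mathcal{W}=\mathcal{W}_{I_k}$. For $(1)\Rightarrow(4)$, recall that a generating wandering subspace is unique by Remark \ref{RM1}(1): applied with $\mathcal{K}=\mathcal{H}$ it gives $\mathcal{W}=\mathcal{H}\ominus\sum_{i=1}^k\overline{\wT^{(i)}(E_i\ot\mathcal{H})}=\bigcap_{i=1}^k\ker\wT^{(i)^*}=\mathcal{W}_{I_k}$, since $\big(\sum_i\mathrm{ran}\,\wT^{(i)}\big)^{\perp}=\bigcap_i\ker\wT^{(i)^*}$ and by the definition \eqref{WA}. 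Thus any $\mathcal{W}$ as in $(1)$ coincides with $\mathcal{W}_{I_k}$, which is then wandering and generating, i.e. $(4)$ holds.

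Next I would run the loop $(4)\Rightarrow(5)\Rightarrow(2)\Rightarrow(1)$. For $(4)\Rightarrow(5)$, put $\mathcal{K}=\mathcal{W}_{I_k}$ and $\sigma_1=\sigma|_{\mathcal{K}}$ and imitate the construction in the proof of Corollary \ref{main1}: define $U:\mathcal{F}(\mathbb{E})\ot_{\sigma_1}\mathcal{K}\to\mathcal{H}$ grade by grade via $U(\xi_{\mathbf{n}}\ot w)=\wT_{\mathbf{n}}(\xi_{\mathbf{n}}\ot w)$. As the representation is isometric, each $\wT_{\mathbf{n}}$ restricts to a surjective isometry $\mathbb{E}(\mathbf{n})\ot_{\sigma_1}\mathcal{K}\to\mathfrak{L}_{\mathbf{n}}^{I_k}(\mathcal{W}_{I_k})$; since both $\mathcal{F}(\mathbb{E})\ot_{\sigma_1}\mathcal{K}$ and $\mathcal{H}=\bigoplus_{\mathbf{n}}\mathfrak{L}_{\mathbf{n}}^{I_k}(\mathcal{W}_{I_k})$ are orthogonal direct sums over $\mathbf{n}$, the map $U$ is unitary, the intertwining identities for $\rho,S^{(i)}$ are the same computation as in Corollary \ref{main1}, and $\dim\mathcal{K}=\dim\mathcal{W}_{I_k}$. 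Then $(5)\Rightarrow(2)$ is formal: the induced representation is doubly commuting and each of its components is itself induced (as recorded just before the theorem), and both properties are preserved by the isomorphism. Finally $(2)\Rightarrow(1)$: each induced $(\sigma,T^{(j)})$ is analytic, so, the hypotheses of Theorem \ref{MT2} being satisfied by an isometric representation (as noted in Corollary \ref{main1}), Corollary \ref{Cor1} produces the generating wandering subspace $\mathcal{W}_{I_k}$ with $\mathcal{H}=\bigvee_{\mathbf{n}}\mathfrak{L}_{\mathbf{n}}^{I_k}(\mathcal{W}_{I_k})$, and Remark \ref{RM1}(2) upgrades this closed span to an orthogonal direct sum, yielding $(1)$ with $\mathcal{W}=\mathcal{W}_{I_k}$.

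It remains to fold in $(3)$, which I would do by proving $(4)\Leftrightarrow(3)$. For $(4)\Rightarrow(3)$, fix any $j$ and regroup the orthogonal decomposition $\mathcal{H}=\bigoplus_{\mathbf{n}}\mathfrak{L}_{\mathbf{n}}^{I_k}(\mathcal{W}_{I_k})$ according to the value of $n_j$; reordering the shifts by the commutation relation \eqref{rep} gives $\mathfrak{L}_{n_j}^{(j)}\big(\mathfrak{L}_{\mathbf{m}}^{I_k}(\mathcal{W}_{I_k})\big)=\mathfrak{L}_{\mathbf{m}+n_je_j}^{I_k}(\mathcal{W}_{I_k})$ whenever $m_j=0$, whence $\mathcal{H}=\bigoplus_{n_j\ge0}\mathfrak{L}_{n_j}^{(j)}(\mathcal{W}')$ with $\mathcal{W}'=\bigoplus_{\mathbf{m}:m_j=0}\mathfrak{L}_{\mathbf{m}}^{I_k}(\mathcal{W}_{I_k})$. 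This exhibits $(\sigma,T^{(j)})$ as analytic, hence induced, and uniqueness of the one-variable wandering subspace in Theorem \ref{MT1} identifies $\mathcal{W}_j=\mathcal{W}'$, which is exactly $(3)$. For $(3)\Rightarrow(4)$ I would reverse the argument: since $(\sigma,T^{(j)})$ is induced it is analytic, so $\mathcal{H}=\bigoplus_{n_j\ge0}\mathfrak{L}_{n_j}^{(j)}(\mathcal{W}_j)$; substituting the hypothesized form of $\mathcal{W}_j$ and reindexing by $\mathbf{n}=\mathbf{m}+n_je_j$ recovers $\mathcal{H}=\bigoplus_{\mathbf{n}}\mathfrak{L}_{\mathbf{n}}^{I_k}(\mathcal{W}_{I_k})$, with the $\mathbf{0}$-summand orthogonal to all others, so $\mathcal{W}_{I_k}$ is wandering and $(4)$ holds.

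I expect the main obstacle to lie in the bookkeeping of $(3)\Leftrightarrow(4)$: one must verify that the single-direction shift $\mathfrak{L}^{(j)}$ preserves orthogonality of the transverse pieces $\mathfrak{L}_{\mathbf{m}}^{I_k}(\mathcal{W}_{I_k})$ and that reordering shifts via \eqref{rep} yields the clean reindexing $\mathfrak{L}_{n_j}^{(j)}\mathfrak{L}_{\mathbf{m}}^{I_k}=\mathfrak{L}_{\mathbf{m}+n_je_j}^{I_k}$, so that knowledge of a single coordinate $j$ bootstraps to the full $\mathbb{N}_0^k$-graded decomposition. The delicacy is that $(3)$ is assumed for only one index, so the regrouping must genuinely transport the Fock structure in the $j$-th direction across the transverse multi-index directions; what keeps the regrouped sum orthogonal is precisely the isometry of each $\wT_{\mathbf{n}}$ together with the doubly commuting relation \eqref{doubly}, equivalently the orthogonality supplied by Remark \ref{RM1}(2).
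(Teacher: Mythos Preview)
Your proof is correct and covers all five conditions, but the organization differs from the paper's. The paper runs the cycle $(1)\Rightarrow(2)\Rightarrow(3)\Rightarrow(4)\Rightarrow(5)\Rightarrow(1)$; its most substantive step is $(1)\Rightarrow(2)$, where the doubly commuting relation \eqref{doubly} is verified by a direct computation that expands an arbitrary $h\in\mathcal{H}$ along the $j$-th coordinate as $h=\sum_{n}\wT_{n}^{(j)}(\xi_n^j\otimes h_n)$ and checks $(I_{E_j}\otimes\wT^{(i)})(t_{i,j}\otimes I)(I_{E_i}\otimes\wT^{(j)^*})=\wT^{(j)^*}\wT^{(i)}$ termwise. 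You instead take $(4)$ as a hub and obtain double commutativity indirectly via $(4)\Rightarrow(5)\Rightarrow(2)$, using that the induced representation on the Fock module is doubly commuting by construction. This is legitimate and a bit more economical; what the paper's route buys is an explicit mechanism showing how the orthogonal wandering decomposition itself forces \eqref{doubly}, without passing through the Fock model. Your additional shortcut $(1)\Rightarrow(4)$ via the uniqueness in Remark~\ref{RM1}(1) is also not part of the paper's cycle but is clean.

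One small correction to your final paragraph: for $(3)\Rightarrow(4)$ you do not need the doubly commuting relation \eqref{doubly} or Remark~\ref{RM1}(2). The orthogonality of the pieces $\mathfrak{L}_{\mathbf{m}+n_je_j}^{I_k}(\mathcal{W}_{I_k})$ is already supplied by the hypothesis of $(3)$ (the orthogonal sum giving $\mathcal{W}_j$), together with the single-variable Wold orthogonality for $(\sigma,T^{(j)})$ and the fact that $\wT_{n_j}^{(j)}$ is an isometry; only the basic commutation \eqref{rep} is needed for the reindexing $\mathfrak{L}_{n_j}^{(j)}\mathfrak{L}_{\mathbf{m}}^{I_k}=\mathfrak{L}_{\mathbf{m}+n_je_j}^{I_k}$. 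Your third-paragraph argument already uses exactly these ingredients and is correct as written, so the obstacle you anticipate does not arise.
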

 \begin{proof}
 (1) $\implies$ (2): Observe that, for each $j, 1\leq j \leq k$,
 \begin{align}\label{eqnn1}
 \mathcal{H}=\bigoplus_{\mathbf{n} \in \mathbb{N}^k_0}\mathfrak{L}_{\mathbf{n}}^{I_k}(\mathcal{W})=\bigoplus_{n \in \mathbb{N}_0} \mathfrak{L}_n^{j}\left(\bigoplus_{\mathbf{n} \in \mathbb{N}^k_0, n_j=0}\mathfrak{L}_{\mathbf{n}}^{I_k}(\mathcal{W})\right).
  \end{align}
It implies that $(\sigma, T^{(j)})$ is an induced representation. Let $h \in \mathcal{H}$ such that
 $$h=\sum_{n=0}^{\infty} \wT_n^{(j)}(\xi^j_n \otimes h_n), ~~\mbox{where}~\xi_n^j \in E_j^{\otimes n}, h_n \in\bigoplus_{\mathbf{n} \in \mathbb{N}^k_0, n_i=0}\mathfrak{L}_{\mathbf{n}}^{I_k}(\mathcal{W}).$$ Then for all $i\neq j$ and $\xi_i \in E_i$, we have
 \begin{align*}
 &(I_{E_j} \otimes \wT^{(i)})(t_{i,j} \otimes I)(I_{E_i} \otimes \wT^{(j)^*})(\xi_i\otimes h)
\\&=(I_{E_j} \otimes \wT^{(i)})(t_{i,j} \otimes I)(I_{E_i} \otimes \wT^{(j)^*})\sum_{n=0}^{\infty}(\xi_i \otimes \wT_n^{(j)}(\xi^j_n \otimes h_n))
\\&=\sum_{n=1}^{\infty}(I_{E_j} \otimes \wT^{(i)})(t_{i,j} \otimes I)(I_{E_i} \otimes \wT^{(j)^*})(\xi_i \otimes \wT_n^{(j)}(\xi^j_n \otimes h_n))
\\&=\sum_{n=1}^{\infty}(I_{E_j} \otimes \wT^{(i)})(t_{i,j} \otimes I)(I_{E_i} \otimes I_{E_j} \otimes \wT_{n-1}^{(j)})(\xi_i \otimes \xi^j_n \otimes h_n)) ~\mbox{(using Equation(\ref{eqnn}))}
\\&=\sum_{n=1}^{\infty}\wT^{(j)^*}\wT^{(i)}(I_{E_i}  \otimes \wT_{n}^{(j)})(\xi_i \otimes \xi^j_n \otimes h_n))~\mbox{(using Equations (\ref{rep}),  (\ref{eqnn}))}
\\&=\wT^{(j)^*}\wT^{(i)}(\xi_i\otimes h))~\mbox{(using Equation (\ref{eqnn1}))}.
 \end{align*}
Hence $(\sigma, T^{(1)}, \ldots,T^{(k)})$ is doubly commuting.\\
 (2)$\implies$ (3): By Corollary \ref{Cor1} we obtain $$\mathcal{H}=\bigoplus_{\mathbf{n} \in \mathbb{N}^k_0}\mathfrak{L}_{\mathbf{n}}^{I_k}\left(\bigcap_{i=1}^n\mathcal{W}_i\right)=\bigoplus_{n \in \mathbb{N}_0} \mathfrak{L}_n^{j}\left(\bigoplus_{\mathbf{n} \in \mathbb{N}^k_0, n_j=0}\mathfrak{L}_{\mathbf{n}}^{I_k}\left(\bigcap_{i=1}^k\mathcal{W}_i\right)\right),$$ and hence (3) follows.\\
 (3) $\implies$ (4): Given that $(\sigma, T^{(j)})$ is an induced representation with the wandering subspace $$\mathcal{W}_j=\bigoplus_{\mathbf{n} \in \mathbb{N}^k_0, n_j=0}\mathfrak{L}_{\mathbf{n}}^{I_k}\left(\bigcap_{i=1}^k\mathcal{W}_i\right).$$ It follows that $$\mathcal{H}=\bigoplus_{n \in \mathbb{N}_0} \mathfrak{L}_n^{j}(\mathcal{W}_j)=\bigoplus_{n \in \mathbb{N}_0} \mathfrak{L}_n^{j}\left(\bigoplus_{\mathbf{n} \in \mathbb{N}^k_0, n_j=0}\mathfrak{L}_{\mathbf{n}}^{I_k}\left(\bigcap_{i=1}^k\mathcal{W}_i\right)\right)=\bigoplus_{\mathbf{n} \in \mathbb{N}^k_0}\mathfrak{L}_{\mathbf{n}}^{I_k}\left(\bigcap_{i=1}^k\mathcal{W}_i\right),$$ and hence (4) follows.\\
 (4) $\implies$(5): Define $\sigma_0:={\sigma}|_{\mathcal{W}_{I_k}}$ and define the unitary operator  $$U: \mathcal{H}\left(=\bigoplus_{\mathbf{n} \in \mathbb{N}^k_0} \wT_{\mathbf{n}} \left(E(\mathbf{n}) \otimes_{\sigma_0} \mathcal{W}_{I_k}\right)\right) \to \mathcal{F}\mathbb(E) \otimes_{\sigma_0} \mathcal{W}_{I_k},$$ by
$$U(\sum_{\mathbf{n} \in \mathbb{N}_0^k}\wT_{\mathbf{n}}(w_{\mathbf{n}})):=\sum_{\mathbf{n} \in \mathbb{N}_0^k}w_{\mathbf{n}},~\mbox{where}~w_{\mathbf{n}} \in  E(\mathbf{n}) \otimes_{{\sigma}_0} \mathcal{W}_{I_k}.$$ Then it is easy to see that $$UT^{(j)}(\xi_j)=S^{(j)}(\xi_j)U, ~U\sigma(a)=\rho(a)U$$ for every $\xi_j \in E_j,a \in \mathcal{M}, j \in I_k,$  where  $(\rho, S^{(1)}, \ldots,S^{(k)})$ is an induced representation induced by $\sigma_0.$\\
 (5)$\implies$ (1): is obvious.
 \end{proof}

\begin{definition}
Let $T=(T_1, \dots, T_k)$ be a $q$-commuting  operators on  Hilbert space $\mathcal{H}$ and $1\leq m \leq k.$ Let $A=\{i_1, \dots i_p \} \subseteq  I_{m}.$ We denote by $T_A$ the $|A|$- tuple of $q$-commuting operators $(T_{i_1}, \cdots ,T_{i_p})$ and also denote $T_{i_1}^{n_{i_1}} \cdots T_{i_p}^{n_{i_p}}$ by $T^{\mathbf{n}}_A,$ where $\mathbf{n}=(n_{i_1}, \cdots, n_{i_p}) \in \mathbb{N}_0^A.$ For a closed subspace $\mathcal{K}$ of $\mathcal{H},$ we write $[\mathcal{K}]_{T_A}$ to denote the smallest closed joint $T_A$-invariant subspace of the Hilbert space $\mathcal{H}$ containing  $\mathcal{K},$ that is, $[\mathcal{K}]_{T_A}=\bigvee_{n \in \mathbb{N}_0^A}T_A^{\mathbf{n}}\mathcal{K}.$ A closed subspace $\mathcal{W}$ of $\mathcal{H}$ is said to be {\rm generating wandering subspace} for an $A$-tuple $T_A=(T_{i_1}, \cdots ,T_{i_p})$  of $q$-commuting operators on $\mathcal{H}$ if $\mathcal{W}\: \bot \: T_A^{\mathbf{n}}(\mathcal{W})$ for all $\mathbf{n} \in \mathbb{N}_0^A$ and $\mathcal{H}=[\mathcal{W}]_{T_A}.$ If there exist a closed subspace $\mathcal{W}$ of $\mathcal{H}$ such that $\mathcal{W}$ is generating subspace for $T,$   the tuple $T$ is said to have the {\rm generating wandering subspace property}.
\end{definition}

 Let $(\sigma, T^{(1)}, \ldots,T^{(k)})$ be an completely bounded  covariant representation of the product systems $\be$(with $E_i= \mathbb{C}$ for all $1 \leq i \leq k$) over $\mathbb{N}^k_0$ on the Hilbert space $\mathcal H.$ From the Equation (\ref{qc}) it implies that the $k$-tuple $T:=(T^{(1)}(1), T^{(2)}(1), \dots, T^{(k)}(1))$ is $q$-commuting. Let $\mathcal{K}$ be a closed subspace of $\mathcal{H},$ then it is  easy to see that $$\overline{T_A^{\mathbf{n}}\mathcal{K}}=\mathfrak{L}^A_{\mathbf{n}}(\mathcal{K}),$$ where $A \subseteq I_m$ and $\mathbf{n} \in \mathbb{N}_0^A.$
  This combined with the following corollary, which is a generalization of  \cite[Theorem 3.4]{MP} and \cite[Theorem 2.4]{S14}.
\begin{corollary}\label{OV}
Let $T=(T_1, \dots,T_k)$ be a $k$-tuple of $q$-doubly commuting operators on Hilbert space $\mathcal{H}$ such that $T$ satisfies one of the following properties:
\begin{enumerate}
\item $T_i$ is concave for each $i=1,  \dots , k,$ that is,\\ $\|T_i^2h\|^2+\|h\|^2 \leq 2\|T_ih\|^2, \: h \in \mathcal{H},$
\item $\|T_ih+g\|^2 \leq 2(\|h\|^2+\|T_ig\|^2),$ \: $h, g \in\mathcal{H}$ and  for $i=1, \dots ,k,$
\item $\|T_ih\|^2+\|T_i^{*2}T_ih\|^2 \leq 2\|T_i^*T_ih\|^2, \: h \in \mathcal{H}$ and for $i=1, \dots ,k.$
\end{enumerate}
For $2 \leq m \leq k,$ there exist $2^m$ joint $(T_1, \dots, T_m)$-reducing subspaces $\{\mathcal{H}_A\: : \: A \subseteq I_m\: \}$ such that $\mathcal{H}=\bigoplus_{A \subseteq I_m}\mathcal{H}_A$
and for all $A=\{i_1, \dots, i_p\}\subseteq I_m $ and $\mathcal{H}_A \neq \{0\}; \:\: (T_{i_1}, \dots, T_{i_p})|_{{\mathcal{H_A}}}$ has generating wandering subspace property  and  $T_i|_{{\mathcal{H_A}}}$ is unitary whenever $i \in I_m\setminus A.$  Moreover, the above decomposition is unique and
$$ \mathcal{H}_A=\bigvee_{\mathbf{n} \in\mathbb{N}_0^A}T_A^{\mathbf{n}}\left(\bigcap_{\mathbf{j} \in \mathbb{N}_0^{I_m \setminus A}} T_{I_m \setminus A}^{\mathbf{J}}\mathcal{W}_A\right),$$
 where $\mathcal{W}_A=\bigcap_{i \in A} (\mathcal{H} \ominus T_i\mathcal{H}).$

\end{corollary}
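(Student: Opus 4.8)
The plan is to reduce Corollary~\ref{OV} to the already-established Theorem~\ref{MT2} via the dictionary supplied by Example~\ref{Exa1}. The starting observation is that the bijection~\eqref{qc} turns a $q$-doubly commuting $k$-tuple $(T_1,\dots,T_k)$ into a doubly commuting completely bounded covariant representation $(\sigma,T^{(1)},\dots,T^{(k)})$ of the product system $\mathbb{E}$ built from $E_i=\mathcal{M}=\mathbb{C}$ with the twisting unitaries $t_{ij}$ determined by the scalars $z_{ij}$. Under this identification one has $\widetilde{T}^{(i)}(\xi_i\otimes h)=\xi_i\,T_i h$ and, more generally, $\widetilde{T}^{(i)*}\widetilde{T}^{(i)}$ corresponds to $T_i^*T_i$ acting on $\mathcal{H}\cong\mathbb{C}\otimes_\sigma\mathcal{H}$. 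So the first step is to verify that each of the three hypotheses (1)--(3) of Corollary~\ref{OV} is exactly the scalar-coefficient specialisation of the corresponding hypothesis (1)--(3) of Theorem~\ref{MT2}; this is a direct translation, since $E_i^{\otimes 2}\otimes\mathcal{H}\cong\mathcal{H}$ collapses $\widetilde{T}^{(i)}_2$ to $T_i^2$, $(I_{E_i}\otimes\widetilde{T}^{(i)})$ to $T_i$, and so on. Hence $(\sigma,T^{(1)},\dots,T^{(k)})$ satisfies the hypotheses of Theorem~\ref{MT2}.

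Next I would apply Theorem~\ref{MT2} to obtain, for each $2\le m\le k$, the orthogonal decomposition $\mathcal{H}=\bigoplus_{A\subseteq I_m}\mathcal{H}_A$ into $(\sigma,T^{(1)},\dots,T^{(m)})$-reducing subspaces, with each $\mathcal{H}_A$ given by~\eqref{HA}. The key translation step here is the identity $\overline{T_A^{\mathbf{n}}\mathcal{K}}=\mathfrak{L}^A_{\mathbf{n}}(\mathcal{K})$, noted in the paragraph just before the corollary, which lets me rewrite the abstract wandering-subspace expression~\eqref{HA} in the concrete operator-tuple language:
\begin{equation*}
\mathcal{H}_A=\bigvee_{\mathbf{n}\in\mathbb{N}_0^A}T_A^{\mathbf{n}}\left(\bigcap_{\mathbf{j}\in\mathbb{N}_0^{I_m\setminus A}}T_{I_m\setminus A}^{\mathbf{J}}\,\mathcal{W}_A\right),
\end{equation*}
where $\mathcal{W}_A=\bigcap_{i\in A}(\mathcal{H}\ominus T_i\mathcal{H})$ is precisely the scalar-coefficient form of the subspace~\eqref{WA}, since $\widetilde{T}^{(i)}(E_i\otimes\mathcal{H})=\overline{T_i\mathcal{H}}$. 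I must also translate the two structural conclusions: ``$(\sigma,T^{(i_1)},\dots,T^{(i_p)})|_{\mathcal{H}_A}$ has generating wandering subspace property'' becomes, via the definition in the preceding paragraph, exactly ``$(T_{i_1},\dots,T_{i_p})|_{\mathcal{H}_A}$ has generating wandering subspace property''. The ``reducing subspace'' notions likewise coincide, because for $E_i=\mathbb{C}$ the covariant-representation invariance of $\mathcal{H}_A$ under $T^{(i)}$ is the ordinary operator invariance under $T_i$.

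The one genuinely nontrivial point is the upgrade from ``isometric and fully co-isometric'' to ``unitary'' for the components $T_i|_{\mathcal{H}_A}$ with $i\in I_m\setminus A$. Theorem~\ref{MT2} only asserts that $(\sigma,T^{(i)})|_{\mathcal{H}_A}$ is isometric \emph{and} fully co-isometric; I would argue that in the scalar case these two conditions together force unitarity. Concretely, $(\sigma,T^{(i)})$ isometric means $T_i^*T_i=I$ on $\mathcal{H}_A$, i.e.\ $T_i|_{\mathcal{H}_A}$ is an isometry; fully co-isometric means $\widetilde{T}^{(i)}\widetilde{T}^{(i)*}=I$, which in the scalar case reads $T_iT_i^*=I$ on $\mathcal{H}_A$. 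Together $T_i^*T_i=T_iT_i^*=I$ gives that $T_i|_{\mathcal{H}_A}$ is unitary, as claimed. I expect this verification, together with checking that the caveat ``$\mathcal{H}_A\ne\{0\}$'' in the statement is merely a convention discarding trivial summands, to be the only place requiring care; everything else is a faithful transcription of Theorem~\ref{MT2} through the $E_i=\mathbb{C}$ dictionary, and the uniqueness of the decomposition is inherited verbatim from the uniqueness clause of Theorem~\ref{MT2}.
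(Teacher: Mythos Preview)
Your proposal is correct and follows exactly the paper's approach: the paper's ``proof'' is just the paragraph immediately preceding the corollary, which sets up the $E_i=\mathbb{C}$ dictionary from Example~\ref{Exa1} (bijection~\eqref{qc}) and records the key identity $\overline{T_A^{\mathbf{n}}\mathcal{K}}=\mathfrak{L}^A_{\mathbf{n}}(\mathcal{K})$, then states the corollary as the specialisation of Theorem~\ref{MT2}. Your write-up is in fact more thorough than the paper's, since you explicitly verify the translation of hypotheses (1)--(3) and the passage from ``isometric and fully co-isometric'' to ``unitary'', both of which the paper leaves implicit.
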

Using the above corollary we can easily prove the results  \cite[Corollary 2.4]{CDSS14}, \cite[Theorem 3.4]{MP} and \cite[Theorem 2.4]{S14}  in the following remark:
\begin{remark}\label{RM3}
(1) Let $T=(T_1, \dots,T_k)$ be as in the above Corollary \ref{OV} such that each $T_i$
 is analytic, that is,  $\bigcap_{n_i=1}^{\infty}T_i^{n_i}\mathcal{H}=\{0\}$ for $1 \leq i \leq k.$ From the Corollary \ref{Cor1}, we have that $$\mathcal{H}=\bigvee_{\mathbf{n} \in \mathbb{N}^k_0}T^{\mathbf{n}}\mathcal{W}_{I_k},$$
  that is, $T$ has a generating wandering subspace property.\\
  (2)  If $T=(T_1, \dots,T_k)$ such that each $T_i$ is an isometry which satisfy  $T_i^*T_j=\overline{{z}_{ij}}T_jT_i^*,$ where $z_{ij}\in  \mathbb{T}$ with $i \neq j, \:  1 \leq i, j \leq k.$  Then it satisfy hypothesis of the Corollary \ref{OV}.
   Using (2) of the Remark \ref{RM1}, we conclude that
    $$\mathcal{H}_A=\bigvee_{\mathbf{n} \in\mathbb{N}_0^A}T_A^{\mathbf{n}}\left(\bigcap_{\mathbf{j} \in \mathbb{N}_0^{I_m \setminus A}} T_{I_m \setminus A}^{\mathbf{J}}\mathcal{W}_A\right)=\bigoplus_{\mathbf{n} \in\mathbb{N}_0^A}T_A^{\mathbf{n}}\left(\bigcap_{\mathbf{j} \in \mathbb{N}_0^{I_m \setminus A}} T_{I_m \setminus A}^{\mathbf{J}}\mathcal{W}_A\right),$$ where $\mathcal{H}_A$ and $\mathcal{W}_A$ as in the Corollary \ref{OV} and $T_i|_{\mathcal{H}_A}$ is pure isometry if $i \in A.$ In particular, if all $T_i$ is analytic, then $\mathcal{H}=\bigoplus_{\mathbf{n} \in \mathbb{N}^k_0}T^{\mathbf{n}}\mathcal{W}_{I_k}.$  Moreover,   the $k$-tuple $(T_1, \dots, T_k)$ of operators   on $\mathcal{H}$ is isomorphic to the multiplication operator $M_{\mathbf{z}}:=(M_{z_1}, \dots, M_{z_k})$  on $\mathcal{H}_{\mathcal{W}}^2(\mathbb{D}^k),$ where $\mathcal{H}_{\mathcal{W}}^2(\mathbb{D}^k)$ denote the Hardy space of $\mathcal{W}(:=\mathcal{W}_{I_k})$-valued analytic functions on $\mathbb{D}^k$ and  $\mathbb{D}$ is an open unit disk in $\mathbb{C}.$ Indeed,  define a unitary operator $$U:\mathcal{H}_{\mathcal{W}}^2(\mathbb{D}^k)=\bigoplus_{\mathbf{n} \in \mathbb{N}^k_0}\mathbf{z}^{\mathbf{n}}\mathcal{W} \to \mathcal{H}=\bigoplus_{\mathbf{n} \in \mathbb{N}^k_0}T^{\mathbf{n}}\mathcal{W},$$ by $U(\sum_{\mathbf{n} \in \mathbb{N}_0^k}\mathbf{z}^{\mathbf{n}}h_{\mathbf{n}})=\sum_{\mathbf{n} \in \mathbb{N}_0^k}T^{\mathbf{n}}h_{\mathbf{n}}$ for all $h_{\mathbf{n}} \in \mathcal{W}.$ It is easy to verify that,  if $z_{ij}=1$ for all $1\leq i,j \leq k,$ then $UM_{z_i}=T_iU$ for $1 \leq i \leq k.$  Conversely, if the multiplication operator $M_{\mathbf{z}}$ and the doubly commuting isometries $T$ are isomorphic then each $T_i$ is analytic, $1 \leq i \leq k$.
   \end{remark}

\section{Beurling-Lax type decomposition for Doubly commuting invariant subspaces}
In this section we completely characterize the  doubly commuting  subspaces of covariant representations of the product systems $\mathbb{E}$ on a Hilbert space $\mathcal{H}.$ This result is a generalization of a version of the Beurling's theorem for the doubly commuting shift on the bidisc by Mandrekar \cite{M88} and the polydisc case proved by Sarkar, Sasane and Wick \cite{S13}.
We first prove Beurling's theorem for the covariant representation of a $C^*$-correspondence which extends Beurling-Lax type theorem due to Popescu in \cite{PoB}(see also a relevant tensor algebra version of Beurling's theorem by Muhly and Solel \cite{MS99}).

\begin{definition}
Let $\mathbb{E}$ be a product system of $C^*$-correspondences  over $\mathbb{N}_0^k$. Let $(\sigma, T^{(1)}, \dots, T^{(k)})$ and $(\psi, V^{(1)}, \dots, V^{(k)})$ be a completely bounded  covariant representations of $\mathbb{E}$ on the Hilbert spaces $\mathcal{H}$ and $\mathcal{K}$ respectively. A bounded operator $A: \mathcal{H}\to \mathcal{K}$ is called {\rm multi-analytic} if it satisfies the  following condition
\begin{align*}
AT^{(i)}(\xi_i)h=V^{(i)}(\xi_i)Ah \hspace{1cm} A \sigma(a)h= \psi(a)Ah,
\end{align*}
where $\xi_i \in E_i, \: h \in \mathcal{H}$ and $ a \in \mathcal{M},  1 \leq i \leq k.$
\end{definition}
Throughout this section, we assume $(\sigma, T^{(1)}, \dots, T^{(k)})$ and $(\psi, V^{(1)}, \dots, V^{(k)})$ to be the doubly commuting isometric  covariant representations of $\mathbb{E}$ on the Hilbert spaces $\mathcal{H}$ and $\mathcal{K}$ such that  for each $1 \leq i \leq k,$ $(\sigma, T^{(i)})$ and $(\psi, V^{(i)})$ are analytic. Then by  Corollary \ref{Cor1}, we have
$$\mathcal{H}=\bigoplus_{\mathbf{n} \in \mathbb{N}_0^k}\wt{T}_{\mathbf{n}}(E(\mathbf{n}) \otimes \mathcal{W}_{\mathcal{H}} ) \:\: \mbox{and} \:\: \mathcal{K}=\bigoplus_{\mathbf{n} \in \mathbb{N}_0^k}\wt{V}_{\mathbf{n}}(E(\mathbf{n}) \otimes \mathcal{W}_{\mathcal{K}} ),$$ where $\mathcal{W}_{\mathcal{H}}$ and $\mathcal{W}_{\mathcal{K}}$ are the generating wandering subspaces for $(\sigma, T^{(1)}, \dots, T^{(k)})$ and $(\psi, V^{(1)},$ $ \dots, V^{(k)}).$
\begin{notation}
If $A: \mathcal{H} \to \mathcal{K}$ is  multi-analytic operator, then $A$ is uniquely determined by the operator $\theta: \mathcal{W}_{\mathcal{H}} \to \mathcal{K},$ where $ \theta:= A|_{\mathcal{W}_{\mathcal{H}}}.$ This follows  because for every $\xi_{\mathbf{n}} \in E(\mathbf{n}), h \in \mathcal{W}_{\mathcal{H}}$ we have $AT_{\mathbf{n}}(\xi_{\mathbf{n}})h=V_{\mathbf{n}}(\xi_{\mathbf{n}})\theta h$ and $\mathcal{H}=\bigoplus_{\mathbf{n} \in \mathbb{N}_0^k}\wt{T}_{\mathbf{n}}(E(\mathbf{n}) \otimes \mathcal{W}_{\mathcal{H}} ).$

Now, let us consider an operator $\theta: \mathcal{W}_{\mathcal{H}} \to \mathcal{K}\left( = \bigoplus_{\mathbf{n} \in \mathbb{N}_0^k}\wt{V}_{\mathbf{n}}(E(\mathbf{n}) \otimes \mathcal{W}_{\mathcal{K}} ) \right).$ We define the operator $M_{\theta}: \mathcal{H} \to \mathcal{K}$ by the relation $$M_{\theta}T_{\mathbf{n}}(\xi_{\mathbf{n}})h=V_{\mathbf{n}}(\xi_{\mathbf{n}})\theta h=V_{\mathbf{n}}(\xi_{\mathbf{n}})M_{\theta} h \:\:\: (\xi_{\mathbf{n}} \in E(\mathbf{n}), h \in \mathcal{W}_{\mathcal{H}}).$$ \\
In this section we only work with $\theta$ such that $M_{\theta}$ is a contraction. It is easy to see that
$$M_{\theta}\left(\bigoplus_{\mathbf{n} \in \mathbb{N}^k_0}h_{\mathbf{n}} \right)=\sum_{\mathbf{n} \in \mathbb{N}^k_0} \wt{V}_{\mathbf{n}}(I_{E(\mathbf{n})} \otimes \theta)\wt{T}_{\mathbf{n}}^*h_n \hspace{1cm} \mbox{for}\: \bigoplus_{\mathbf{n} \in \mathbb{N}^k_0}h_{\mathbf{n}} \in \mathcal{H}.$$
\end{notation}
\begin{definition}
	An operator $\theta: \mathcal{W}_{\mathcal{H}} \to \mathcal{K}$ will be called
\begin{enumerate}
\item {\rm inner} if $M_{\theta}$ is an isometry,
\item {\rm outer} if $\overline{M_{\theta}\mathcal{H}}=\mathcal{K}$.
\end{enumerate}\end{definition}

\begin{proposition}
Let $\theta: \mathcal{W}_{\mathcal{H}} \to \mathcal{K}$ be an operator such that $M_{\theta} $ is a contraction.
\begin{enumerate}
\item $\theta$ is inner if and only if $\theta$ is an isometry and $\theta \mathcal{W}_{\mathcal{H}} $ is a wandering subspace for $(\psi, V^{(1)}, \dots, V^{(k)}).$
\item $\theta$ is outer if and only if    $\theta \mathcal{W}_{\mathcal{H}} $ is cyclic for  $(\psi, V^{(1)}, \dots, V^{(k)})$, i.e. $$\bigvee_{\mathbf{n} \in \mathbb{N}^k_0}\wt{V}_{\mathbf{n}}(E(\mathbf{n}) \otimes  \theta \mathcal{W}_{\mathcal{H}} )=\mathcal{K}.$$
    \item $\theta$ is inner and  outer if and only if $\theta$ is a unitary operator from $ \mathcal{W}_{\mathcal{H}}$ to $\mathcal{W}_{\mathcal{K}}.$
\end{enumerate}
\end{proposition}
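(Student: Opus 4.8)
The plan is to reduce every assertion to the orthogonal grading
$$\mathcal{H}=\bigoplus_{\mathbf{n}\in\mathbb{N}_0^k}\wt{T}_{\mathbf{n}}(E(\mathbf{n})\otimes\mathcal{W}_{\mathcal{H}}),\qquad \mathcal{K}=\bigoplus_{\mathbf{n}\in\mathbb{N}_0^k}\wt{V}_{\mathbf{n}}(E(\mathbf{n})\otimes\mathcal{W}_{\mathcal{K}})$$
supplied by Corollary \ref{Cor1}, together with three facts I would record at the outset: each $\wt{T}_{\mathbf{n}}$ and $\wt{V}_{\mathbf{n}}$ is an isometry (the representations are isometric), the grade-$\mathbf{0}$ component gives $M_{\theta}|_{\mathcal{W}_{\mathcal{H}}}=\theta$, and $\theta$ intertwines $\sigma|_{\mathcal{W}_{\mathcal{H}}}$ with $\psi$, so that $I_{E(\mathbf{n})}\otimes\theta$ is meaningful and $\psi(a)\theta=\theta\sigma(a)$ on $\mathcal{W}_{\mathcal{H}}$. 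The working identity throughout is the displayed formula $M_{\theta}\left(\bigoplus_{\mathbf{n}}h_{\mathbf{n}}\right)=\sum_{\mathbf{n}}\wt{V}_{\mathbf{n}}(I_{E(\mathbf{n})}\otimes\theta)\wt{T}_{\mathbf{n}}^{*}h_{\mathbf{n}}$.

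For (1) I would split the isometry condition on $M_{\theta}$ into a within-grade and an across-grade part. Within a fixed grade $\mathbf{n}$, using that $\wt{V}_{\mathbf{n}}$ is isometric and the intertwining relation, one computes $\langle M_{\theta}\wt{T}_{\mathbf{n}}(\xi\otimes h),M_{\theta}\wt{T}_{\mathbf{n}}(\eta\otimes g)\rangle=\langle\theta^{*}\theta h,\sigma(\langle\xi,\eta\rangle)g\rangle$, whereas $\langle\wt{T}_{\mathbf{n}}(\xi\otimes h),\wt{T}_{\mathbf{n}}(\eta\otimes g)\rangle=\langle h,\sigma(\langle\xi,\eta\rangle)g\rangle$; equality for all choices is exactly $\theta^{*}\theta=I_{\mathcal{W}_{\mathcal{H}}}$, and taking $\mathbf{n}=\mathbf{0}$ already forces $\theta$ to be an isometry. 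Across distinct grades $\mathbf{n}\neq\mathbf{m}$ the left-hand vectors are orthogonal in $\mathcal{H}$, so $M_{\theta}$ isometric forces $\langle\wt{V}_{\mathbf{n}}(\xi\otimes\theta h),\wt{V}_{\mathbf{m}}(\eta\otimes\theta g)\rangle=0$, that is $\mathfrak{L}_{\mathbf{n}}^{I_k}(\theta\mathcal{W}_{\mathcal{H}})\perp\mathfrak{L}_{\mathbf{m}}^{I_k}(\theta\mathcal{W}_{\mathcal{H}})$; specializing $\mathbf{n}=\mathbf{0}$ shows that $\theta\mathcal{W}_{\mathcal{H}}$ (closed and $\psi(\mathcal{M})$-invariant once $\theta$ is an isometry) is wandering for $(\psi,V^{(1)},\dots,V^{(k)})$. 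Conversely, if $\theta$ is an isometry and $\theta\mathcal{W}_{\mathcal{H}}$ is wandering, then by Remark \ref{RM1}(2) the summands $\wt{V}_{\mathbf{n}}(I\otimes\theta)\wt{T}_{\mathbf{n}}^{*}h_{\mathbf{n}}$ are mutually orthogonal; since $\wt{V}_{\mathbf{n}}$, $I_{E(\mathbf{n})}\otimes\theta$, and $\wt{T}_{\mathbf{n}}^{*}$ restricted to $\mathrm{ran}\,\wt{T}_{\mathbf{n}}$ are all isometric, the working identity yields $\|M_{\theta}\left(\bigoplus_{\mathbf{n}}h_{\mathbf{n}}\right)\|^{2}=\sum_{\mathbf{n}}\|h_{\mathbf{n}}\|^{2}$, so $M_{\theta}$ is an isometry.

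Part (2) is almost formal: since $M_{\theta}$ is continuous and carries the dense set $\{\wt{T}_{\mathbf{n}}(\xi\otimes h):h\in\mathcal{W}_{\mathcal{H}}\}$ onto $\{\wt{V}_{\mathbf{n}}(\xi\otimes\theta h)\}$, one obtains $\overline{M_{\theta}\mathcal{H}}=\bigvee_{\mathbf{n}}\wt{V}_{\mathbf{n}}(E(\mathbf{n})\otimes\theta\mathcal{W}_{\mathcal{H}})$, which equals $\mathcal{K}$ precisely when $\theta\mathcal{W}_{\mathcal{H}}$ is cyclic. For (3), combining (1) and (2) shows that $\theta$ is inner and outer if and only if $\theta$ is an isometry and $\theta\mathcal{W}_{\mathcal{H}}$ is a \emph{generating} wandering subspace for $(\psi,V^{(1)},\dots,V^{(k)})$; the uniqueness clause of Corollary \ref{Cor1} forces the generating wandering subspace to be $\mathcal{W}_{\mathcal{K}}=\bigcap_{i=1}^{k}\ker\wt{V}^{(i)^{*}}$, whence $\theta\mathcal{W}_{\mathcal{H}}=\mathcal{W}_{\mathcal{K}}$ and $\theta$ is unitary onto $\mathcal{W}_{\mathcal{K}}$. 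The reverse implication is immediate, since a unitary $\theta:\mathcal{W}_{\mathcal{H}}\to\mathcal{W}_{\mathcal{K}}$ is an isometry with $\theta\mathcal{W}_{\mathcal{H}}=\mathcal{W}_{\mathcal{K}}$ both wandering and cyclic.

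The step I expect to be the main obstacle is the forward direction of (1): one must verify that $M_{\theta}$ being a global isometry genuinely separates into the pointwise condition $\theta^{*}\theta=I$ and the wandering condition on $\theta\mathcal{W}_{\mathcal{H}}$, and that the cross-grade orthogonality extracted there is exactly the input needed to re-run the norm computation in the converse. This is where the isometric, doubly commuting structure is essential, via Remark \ref{RM1}(2), which upgrades \emph{wandering} to full orthogonality of all grades; without it the summands in the working identity could fail to be orthogonal and $M_{\theta}$ need not be isometric even when $\theta$ is.
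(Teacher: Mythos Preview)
Your proof is correct and follows essentially the same strategy as the paper. The paper's argument for (1) is the same grade-by-grade computation you outline (though it leaves the fact that $\theta=M_\theta|_{\mathcal{W}_{\mathcal{H}}}$ is isometric implicit), and its converse for (1) performs exactly your norm computation, implicitly invoking the orthogonality of $\mathfrak{L}^{I_k}_{\mathbf{n}}(\theta\mathcal{W}_{\mathcal{H}})$ and $\mathfrak{L}^{I_k}_{\mathbf{m}}(\theta\mathcal{W}_{\mathcal{H}})$ for $\mathbf{n}\neq\mathbf{m}$ that you make explicit via Remark~\ref{RM1}(2). For (2) the paper simply declares the statement ``easy,'' and for (3) it defers to Theorem~\ref{MT3} rather than the uniqueness clause of Corollary~\ref{Cor1}, but both routes amount to the same uniqueness of the generating wandering subspace.
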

\begin{proof}
(1) Assume $M_{\theta}$ is an isometry. Let $h_1, h_2 \in\mathcal{W}_{\mathcal{H}},$ and $ \xi_{\mathbf{n}} \in E(\mathbf{n}), \mathbf{n} \in \mathbb{N}^k_0 \setminus \{0\} ,$ we have
\begin{align*}
\langle \theta h_1, \wt{V}_\mathbf{n}(I_{E(\mathbf{n})} \otimes \theta)(\xi_{\mathbf{n}} \otimes h_2) \rangle  = & \langle \theta h_1, M_{\theta}\wt{T}_{\mathbf{n}}(\xi_{\mathbf{n}} \otimes h_2) \rangle \\
=& \langle h_1, \wt{T}_{\mathbf{n}}(\xi_{\mathbf{n}} \otimes h_2 \rangle)=0,
\end{align*}
since $\mathcal{W}_{\mathcal{H}}$ is wandering subspace for $(\sigma, T^{(1)}, \dots, T^{(k)}).$ Therefore, $\theta \mathcal{W}_{\mathcal{H}}$ is wandering subspace for $(\psi, V^{(1)}, \dots, V^{(k)}).$ \newline Conversely, let $h, k \in \mathcal{H}\left(=\bigoplus_{\mathbf{n} \in \mathbb{N}_0^k}\wt{T}_{\mathbf{n}}(E(\mathbf{n}) \otimes \mathcal{W}_{\mathcal{H}} ) \right),$ where $$ h= \sum_{\mathbf{n} \in \mathbb{N}^k_0}\wt{T}_{\mathbf{n}}(\xi_{\mathbf{n}} \otimes h_{\mathbf{n}}), k=\sum_{\mathbf{m} \in \mathbb{N}^k_0}\wt{V}_{\mathbf{m}}(\eta_{\mathbf{m}} \otimes w_{\mathbf{n}})$$ for some $ \xi_{\mathbf{n}}\in E(\mathbf{n}),\eta_{\mathbf{m}} \in E(\mathbf{m})$ and $ h_{\mathbf{n}},k_{\mathbf{n}} \in \mathcal{W}_{\mathcal{H}}.$ Since $\theta$ is an isometry and $\theta \mathcal{W}_{\mathcal{H}}$ is wandering subspace
\begin{align*}
\langle M_{\theta}h, M_{\theta}k \rangle =& \sum_{\mathbf{n}, \mathbf{m} \in \mathbb{N}^k_0}\langle M_{\theta}\wt{T}_{\mathbf{n}}(\xi_{\mathbf{n}} \otimes h_{\mathbf{n}}), M_{\theta}\wt{T}_{\mathbf{m}}(\eta_{\mathbf{m}} \otimes k_{\mathbf{m}})\rangle \\
=& \sum_{\mathbf{n}, \mathbf{m} \in \mathbb{N}^k_0}\langle \wt{V}_{\mathbf{n}}(\xi_{\mathbf{n}} \otimes \theta h_{\mathbf{n}}), \wt{V}_{\mathbf{m}}(\eta_{\mathbf{m}} \otimes \theta k_{\mathbf{m}})\rangle \\
=& \sum_{\mathbf{n} \in \mathbb{N}^k_0}\langle \xi_{\mathbf{n}} \otimes \theta h_{\mathbf{n}}, \eta_{\mathbf{n}} \otimes \theta k_{\mathbf{n}}\rangle \\
=& \sum_{\mathbf{n} \in \mathbb{N}^k_0}\langle  \xi_{\mathbf{n}} \otimes  h_{\mathbf{n}}, \eta_{\mathbf{n}} \otimes k_{\mathbf{n}}\rangle= \langle h, k\rangle.
\end{align*}
Hence $M_{\theta}$ is an isometry.

The statement (2) is easy to prove and
(3) follows from Theorem \ref{MT3}.
\end{proof}
The following theorem for a covariant representation of a $C^*$-correspondence is a generalization of Popescu's version of Beurling-Lax theorem \cite[Theorem 2.2]{PoB}.
\begin{theorem}\label{MT4}
Let $(\sigma, T)$ be a analytic, isometric covariant representation of $E$ on the Hilbert space $\mathcal{H}$ and let $\mathcal{K}$ be closed subspace of $\mathcal{H}.$ Then  $\mathcal{K}$ is $(\sigma, T)$-invariant if and only if there exists a Hilbert space $\mathcal{W},$ a representation $\sigma_1$ of $\mathcal{M}$ on $\mathcal{W}$ and inner operator $\theta: \mathcal{W}_{\mathcal{F}(E) \otimes \mathcal{W}} \to \mathcal{H}$ such that $$\mathcal{K}= M_{\theta}(\mathcal{F}(E) \otimes \mathcal{W}),$$ where  $\mathcal{W}_{\mathcal{F}(E) \otimes \mathcal{W}}$ is  generating wandering subspace for the induced representation $(\rho, V)$  induced by $\sigma_1.$ In particular, if $\mathcal{K}=\mathcal{H},$ then $\theta$ is outer.
\end{theorem}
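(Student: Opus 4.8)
The plan is to prove both implications, with the forward (harder) direction built on the equivalence \emph{analytic $\iff$ induced} recorded after Corollary \ref{main1} together with the generating-wandering-subspace conclusion of Corollary \ref{Cor1}.

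For the easy direction, suppose $\mathcal{K}=M_\theta(\mathcal{F}(E)\otimes\mathcal{W})$ with $\theta$ inner. Then $M_\theta$ is an isometry, and by construction it is multi-analytic, so $M_\theta V(\xi)=T(\xi)M_\theta$ and $M_\theta\rho(a)=\sigma(a)M_\theta$ for $\xi\in E$, $a\in\mathcal{M}$, where $(\rho,V)$ is the induced representation on $\mathcal{F}(E)\otimes\mathcal{W}$. Hence $T(\xi)\mathcal{K}=T(\xi)M_\theta(\mathcal{F}(E)\otimes\mathcal{W})=M_\theta V(\xi)(\mathcal{F}(E)\otimes\mathcal{W})\subseteq M_\theta(\mathcal{F}(E)\otimes\mathcal{W})=\mathcal{K}$, and likewise $\sigma(a)\mathcal{K}\subseteq\mathcal{K}$, so $\mathcal{K}$ is $(\sigma,T)$-invariant.

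For the forward direction, assume $\mathcal{K}$ is $(\sigma,T)$-invariant. First I would verify that the restriction $(\sigma,T)|_{\mathcal{K}}$ is again an analytic, isometric covariant representation of $E$ on $\mathcal{K}$: isometry is inherited since $T(\xi)^*T(\zeta)=\sigma(\langle\xi,\zeta\rangle)$ restricts to $\mathcal{K}$ using the $\sigma(\mathcal{M})$-invariance of $\mathcal{K}$, and analyticity (purity) follows from $\bigcap_n\wt{T}_n(E^{\otimes n}\otimes\mathcal{K})\subseteq\bigcap_n\wt{T}_n(E^{\otimes n}\otimes\mathcal{H})=\{0\}$. By Theorem \ref{MT1} (pure case), equivalently Corollary \ref{Cor1}, the representation $(\sigma,T)|_{\mathcal{K}}$ has the generating wandering subspace property with wandering subspace $\mathcal{W}:=\mathcal{K}\ominus\wt{T}(E\otimes\mathcal{K})$, and being analytic and isometric it is an induced representation. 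I would then set $\sigma_1:=\sigma|_{\mathcal{W}}$ and take $(\rho,V)$ to be the induced representation of $E$ on $\mathcal{F}(E)\otimes_{\sigma_1}\mathcal{W}$, whose generating wandering subspace $\mathcal{W}_{\mathcal{F}(E)\otimes\mathcal{W}}$ is the degree-zero summand, naturally identified with $\mathcal{W}$.

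Defining $\theta:\mathcal{W}_{\mathcal{F}(E)\otimes\mathcal{W}}\cong\mathcal{W}\hookrightarrow\mathcal{H}$ to be the inclusion, the associated operator $M_\theta$ sends $V_{\mathbf{n}}(\xi_{\mathbf{n}})w\mapsto T_{\mathbf{n}}(\xi_{\mathbf{n}})w$. Because the Fock summands $\wt{V}_n(E^{\otimes n}\otimes\mathcal{W})$ are mutually orthogonal, because $\mathcal{W}$ is wandering for the isometric representation $(\sigma,T)|_{\mathcal{K}}$ so that the spaces $\mathfrak{L}_n(\mathcal{W})$ are mutually orthogonal, and because $M_\theta$ preserves inner products within each degree, $M_\theta$ is an isometry; its range equals $\bigoplus_n\mathfrak{L}_n(\mathcal{W})=\mathcal{K}$, so $\theta$ is inner and $\mathcal{K}=M_\theta(\mathcal{F}(E)\otimes\mathcal{W})$. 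When $\mathcal{K}=\mathcal{H}$ we have $\mathcal{W}=\mathcal{W}_{\mathcal{H}}$ and $M_\theta$ is onto $\mathcal{H}$, so $\overline{M_\theta(\mathcal{F}(E)\otimes\mathcal{W})}=\mathcal{H}$, i.e.\ $\theta$ is outer, and in fact unitary onto $\mathcal{W}_{\mathcal{H}}$ by the preceding Proposition. The main obstacle I anticipate is not the construction itself but the bookkeeping that makes it rigorous: confirming that the restriction stays analytic and isometric, correctly identifying $\mathcal{W}_{\mathcal{F}(E)\otimes\mathcal{W}}$ with the wandering subspace $\mathcal{W}$, and checking that $M_\theta$ is a genuine isometry with range exactly $\mathcal{K}$ (equivalently, that $\theta$ is inner). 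All of this reduces to the Wold/wandering-subspace machinery of Theorem \ref{MT1} and the analytic-equals-induced dichotomy, so the argument is a careful assembly of those ingredients.
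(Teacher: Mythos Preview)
Your proposal is correct and follows essentially the same route as the paper: obtain the wandering-subspace decomposition $\mathcal{K}=\bigoplus_{n\ge 0}\mathfrak{L}_n(\mathcal{W})$ for the restriction, build the induced representation on $\mathcal{F}(E)\otimes\mathcal{W}$, and take $\theta$ to be the inclusion so that $M_\theta$ is the canonical intertwiner with range $\mathcal{K}$. The paper simply cites \cite[Theorem~2.4]{HS19} for the decomposition of $\mathcal{K}$, whereas you recover it by checking that $(\sigma,T)|_{\mathcal{K}}$ remains isometric and analytic and then invoking Theorem~\ref{MT1}; one small caveat is that your parenthetical ``equivalently Corollary~\ref{Cor1}'' is not quite apt, since that corollary is stated for \emph{reducing} subspaces in the product-system setting---Theorem~\ref{MT1} alone suffices here.
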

\begin{proof}
Let $\mathcal{K}$ be a $(\sigma, T)$-invariant subspace. It was shown in \cite[Theorem 2.4]{HS19}, there exists a wandering subspace $\mathcal{W}_{\mathcal{K}}$ for $(\sigma, T)|_{\mathcal{K}}$ such that $$\mathcal{K}=\bigoplus_{n \in \mathbb{N}_0}\mathfrak{L}_n(\mathcal{W}_{\mathcal{K}}).$$ Since $\mathcal{W}_{\mathcal{K}}$ is $\sigma(\mathcal{M})$-invariant, consider the induced representations $(\rho, V)$ on $\mathcal{H}_1:=\mathcal{F}(E) \otimes \mathcal{W}_{\mathcal{K}}$ induced by $\sigma|_{\mathcal{W}_{\mathcal{K}}}.$ Define the operator $A: \mathcal{F}(E)\otimes \mathcal{W}_{\mathcal{K}} \to \mathcal{H}$ by
\begin{align}\label{BU1}
A(\bigoplus_{n \in \mathbb{N}_0}\xi_n \otimes w_n)=\sum_{n \in \mathbb{N}_0}\wt{T}_n(\xi_n \otimes w_n)\:\:\:\:\:\mbox{for}\:\: \xi_n \in E^{\otimes n}\: \mbox{and}\: w_n \in \mathcal{W}_{\mathcal{K}},
\end{align}
then $A$ satisfies $AV(\xi)=T(\xi)A, A\rho(a)=\sigma(a)A$ for $ \xi \in E, a \in \mathcal{M} ,$ that is, $A$ is multi-analytic operator. It is easy to see that $A=M_{\theta}$ and $M_{\theta}$ is an isometry, where $\theta: \mathcal{W}_{\mathcal{H}_1} \to \mathcal{H}, \mathcal{W}_{\mathcal{H}_1}$ is generating wandering subspace for $(\rho, V).$ Finally, it follows from the Equation (\ref{BU1}) that $M_{\theta}\mathcal{H}_1=\mathcal{K}.$ The converse part is immediate.
\end{proof}

Let us recall a setup due to Popescu in \cite{PoB}:  Let $\Lambda$ be either  the set $\{1, 2, \dots, m\}$ where $m \in \mathbb{N},$ or $\mathbb{N}.$ For every  $k \in \mathbb{N},$  let $F(k, \Lambda) $ be the set of all functions from the set $\{1, 2, \dots, k\}$ to $\Lambda.$ Define $\mathfrak{F}:=\bigcup_{k=0}^{\infty}F(k, \Lambda),$ where $F(0, \Lambda)=\{0\}.$ A closed subspace $\mathcal{W} \subseteq \mathcal{H}$ is said to be generating wandering subspace for the sequence of isometries $\{T_n\}_{n=1}^m$ on the Hilbert space $\mathcal{H}$  if  for any distinct element $f,g \in \mathfrak{F}$ we have $T_f \mathcal{W}\bot T_g\mathcal{W},$ where $T_f$ stands for the product $T_{f(1)}T_{f(2)} \cdots T_{f(k)}$ and $T_0=I_{\mathcal{H}}, f \in \mathfrak{F},$ and $$\mathcal{H}=\bigoplus_{f \in \mathfrak{F}}T_f\mathcal{W}.$$ \\
 Let $\mathcal{H}$ be a Hilbert space. Define the Hilbert space of all formal power series with noncommuting indeterminates $X_{\lambda}(\lambda \in \Lambda)$ by \begin{align}\label{INP}
 \mathfrak{L}^2(\mathfrak{F}, \mathcal{H})=\big\{ \sum_{f \in \mathfrak{F}}h_{f}X_f \: : \: \: \: h_f \in \mathcal{H},\: \sum_{f \in \mathfrak{F}} \|h_{f}\|^2 < \infty\big\},
 \end{align}
  with the inner product is $$ \langle \sum_{f \in \mathfrak{F}}h_{f}X_f,\sum_{f \in \mathfrak{F}}g_{f}X_f\rangle =\sum_{f \in \mathfrak{F}}\langle h_f, g_f\rangle,$$
  where $X_f=X_{f(1)}X_{f(2)}\cdots X_{f(k)}$ for any $f \in F(k, \Lambda).$ Define the $\Lambda$-orthogonal shift $\mathfrak{S}=\{S_{\lambda}\}_{\lambda \in \Lambda}$ on $\mathfrak{L}^2(\mathfrak{F}, \mathcal{H})$ by
 \begin{align}\label{INP1}
 S_{\lambda}\big(\sum_{f \in \mathfrak{F}}h_{f}X_f\big)=\sum_{f \in \mathfrak{F}}h_{f}X_{\lambda}X_f.
 \end{align}
 It is easy to see that, the noncommuting operators $S_{\lambda}(\lambda \in \Lambda)$ is an isometry, $\mathcal{H}$ is generating wandering subspace for  the $\Lambda$-orthogonal shift $\mathfrak{S}$ and $\sum_{\lambda \in \Lambda }S_{\lambda}S_{\lambda}^* \leq I.$ As a consequence of Theorem \ref{MT4} we obtain the following corollary (which is \cite[Theorem 2.2]{PoB}).
   \begin{corollary}
Let $\{T_n\}$ be a finite (respectively, infinite )sequence of noncommuting isometries on the Hilbert space $\mathcal{H}$ such that $\sum T_nT^*_n \leq I$ and $\{T_n\}$ has generating wandering subspace property. Let $\mathcal{K}$ be a closed subspace of $\mathcal{H}.$ Then $\mathcal{K}$ is joint $\{T_n\}$-invariant if and only if there exists a Hilbert space $\mathcal{H}_1,$ a finite (respectively, infinite) sequence of orthogonal shift $\{S_{\lambda}\}$ on  $\mathfrak{L}^2(\mathfrak{F}, \mathcal{H}_1)$  and inner operator $\theta: \mathcal{H}_1 \to \mathcal{H}$ such that $\mathcal{K}=M_{\theta}\mathfrak{L}^2(\mathfrak{F}, \mathcal{H}).$
 \end{corollary}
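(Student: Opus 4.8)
The plan is to deduce this corollary as the scalar specialization of Theorem \ref{MT4} to a single $C^*$-correspondence over $\mathbb{C}$. Concretely, I would take $\mathcal{M}=\mathbb{C}$, let $E$ be the Hilbert space $\ell^2(\Lambda)$ with orthonormal basis $\{e_\lambda\}_{\lambda\in\Lambda}$ (finite-dimensional when $\Lambda=\{1,\dots,m\}$), and let $\sigma$ be the identity representation of $\mathbb{C}$ on $\mathcal{H}$. By the correspondence recorded in the opening Example, the sequence $\{T_n\}$ with $\sum_n T_nT_n^*\leq I$ is exactly the datum of a completely contractive covariant representation $(\sigma,T)$ of $E$ with $T(e_n)=T_n$, and $\sigma(\mathbb{C})=\mathbb{C}I_{\mathcal H}$, so $\sigma(\mathcal M)'=B(\mathcal H)$ and $\sigma$-invariance is automatic.

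First I would verify that $(\sigma,T)$ is isometric. Since each $T_n$ is an isometry, $T_nT_n^*$ is the orthogonal projection onto $\operatorname{ran}T_n$; for any $m\neq n$ one has $T_mT_m^*+T_nT_n^*\leq I$, which forces $T_mT_n^*=0$, hence $T_n^*T_m=\delta_{nm}I$. Unwinding $\widetilde{T}(e_n\otimes h)=T_nh$, this says precisely that $\widetilde{T}$ is an isometry, so by the bijection $(\sigma,T)\mapsto\widetilde{T}$ of \cite[Lemma 3.5]{MR1648483} the representation $(\sigma,T)$ is isometric. Next I would translate the hypothesis and the conclusion through the Fock-space identifications. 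Since $E^{\otimes n}\cong\ell^2(F(n,\Lambda))$ and $T(\xi_1)\cdots T(\xi_n)h$ expands into words $T_fh$ with $f\in F(n,\Lambda)$, one gets $\mathfrak{L}_n(\mathcal{W})=\bigvee_{f\in F(n,\Lambda)}T_f\mathcal{W}$; the orthogonality $T_f\mathcal{W}\perp T_g\mathcal{W}$ for distinct $f,g$ of equal length is automatic from $T_n^*T_m=\delta_{nm}I$, so Popescu's generating wandering subspace property coincides with the generating wandering subspace property of the isometric representation $(\sigma,T)$. By the uniqueness of the Wold decomposition in Theorem \ref{MT1}, the latter is in turn equivalent to $(\sigma,T)$ being analytic (as also recorded after Corollary \ref{main1}).

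In the same vein, $\mathcal{F}(E)\otimes\mathcal{H}_1\cong\mathfrak{L}^2(\mathfrak{F},\mathcal{H}_1)$ via $e_{f(1)}\otimes\cdots\otimes e_{f(k)}\otimes h\mapsto hX_f$, and under this unitary the induced representation induced by $\sigma_1$ is carried exactly to the $\Lambda$-orthogonal shift $\{S_\lambda\}$ of \eqref{INP1}, while the generating wandering subspace $\mathcal{W}_{\mathcal{F}(E)\otimes\mathcal{H}_1}$ is identified with the degree-zero copy of $\mathcal{H}_1$. Consequently the inner operator $\theta\colon\mathcal{W}_{\mathcal{F}(E)\otimes\mathcal{H}_1}\to\mathcal{H}$ supplied by Theorem \ref{MT4} becomes an inner operator $\theta\colon\mathcal{H}_1\to\mathcal{H}$ in Popescu's sense.

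With this dictionary in place the conclusion is immediate: a closed subspace $\mathcal{K}$ is joint $\{T_n\}$-invariant if and only if it is $(\sigma,T)$-invariant, and Theorem \ref{MT4} then yields $\mathcal{K}=M_\theta(\mathcal{F}(E)\otimes\mathcal{H}_1)=M_\theta\mathfrak{L}^2(\mathfrak{F},\mathcal{H}_1)$ for an inner $\theta$, with the converse equally direct. I do not expect a genuine analytic obstacle here, since everything reduces to Theorem \ref{MT4}; the only real work — the ``hard part'' such as it is — lies in making the dictionary faithful, namely checking that the abstract induced representation matches the concrete orthogonal shift and that the abstract multi-analytic operator $M_\theta$ of the previous section agrees with Popescu's concrete $M_\theta$ under these identifications.
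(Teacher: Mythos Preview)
Your proposal is correct and follows exactly the route the paper intends: the corollary is stated there as an immediate consequence of Theorem \ref{MT4} with no further proof, and you have simply spelled out the dictionary (taking $\mathcal M=\mathbb C$, $E=\ell^2(\Lambda)$, verifying $T_n^*T_m=\delta_{nm}I$, and identifying $\mathcal F(E)\otimes\mathcal H_1$ with $\mathfrak L^2(\mathfrak F,\mathcal H_1)$ carrying the induced representation to the $\Lambda$-orthogonal shift). The only thing one might add to your sketch is the different-length part of Popescu's wandering condition, which follows from the isometric case of the paper's wandering-subspace definition (see the remark after Definition~1.4) together with the same-length orthogonality you already noted.
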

In the following proposition we characterize the reducing subspace of  the covariant representations $(\sigma, T^{(1)}, \dots, T^{(k)})$ of $\mathbb{E}$ on the Hilbert space $\mathcal{H}.$

\begin{proposition}\label{BU2}
Let $(\sigma, T^{(1)}, \dots, T^{(k)})$  be a doubly commuting isometric  covariant representations of $\mathbb{E}$ on the Hilbert spaces $\mathcal{H}$  such that each $(\sigma, T^{(i)})$ is analytic. Then $\mathcal{K}$ is a $(\sigma, T^{(1)}, \dots, T^{(k)})$- reducing  subspace of $\mathcal{H}$ if and only if
$$\mathcal{K}=\bigoplus_{\mathbf{n} \in \mathbb{N}_0^k}\wt{T}_{\mathbf{n}}(E(\mathbf{n}) \otimes \mathcal{W}),$$ where $\mathcal{W}= \bigcap_{i=1}^k \mbox{ker}\wt{T}^{(i)^*}|_{\mathcal{K}}$
\end{proposition}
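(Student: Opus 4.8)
The plan is to prove both implications, obtaining the forward direction almost immediately from Corollary \ref{Cor1} and the converse by a direct computation with the commutation relation \eqref{rep} and the doubly commuting relation \eqref{doubly}.

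For the forward implication I would start by assuming $\mathcal{K}$ is $(\sigma, T^{(1)}, \dots, T^{(k)})$-reducing. Since the ambient representation is analytic, isometric and doubly commuting, it satisfies the hypotheses of Theorem \ref{MT2} (as already observed in the proof of Corollary \ref{main1}), so Corollary \ref{Cor1} applies directly to the reducing subspace $\mathcal{K}$ and furnishes $\mathcal{K} = \bigvee_{\mathbf{n} \in \mathbb{N}_0^k} \mathfrak{L}_{\mathbf{n}}^{I_k}(\mathcal{W})$ with $\mathcal{W} = \bigcap_{i=1}^k \ker \wt{T}^{(i)^*}|_{\mathcal{K}}$. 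I would then invoke Remark \ref{RM1}(2), which for an isometric doubly commuting representation gives mutual orthogonality of the subspaces $\mathfrak{L}_{\mathbf{n}}^{I_k}(\mathcal{W})$; hence the closed span is an orthogonal direct sum, and using $\mathfrak{L}_{\mathbf{n}}^{I_k}(\mathcal{W}) = \overline{\wt{T}_{\mathbf{n}}(E(\mathbf{n}) \otimes \mathcal{W})}$ this is exactly the claimed decomposition.

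For the converse I would assume $\mathcal{K} = \bigoplus_{\mathbf{n} \in \mathbb{N}_0^k} \wt{T}_{\mathbf{n}}(E(\mathbf{n}) \otimes \mathcal{W})$ with $\mathcal{W} = \bigcap_{i=1}^k \ker \wt{T}^{(i)^*}|_{\mathcal{K}}$, and aim to show $\mathcal{K}$ is reducing. The $\sigma(\mathcal{M})$-invariance of $\mathcal{K}$ is immediate since $E(\mathbf{n}) \otimes_{\sigma} \mathcal{W}$ already requires $\mathcal{W}$ to be $\sigma(\mathcal{M})$-invariant, and invariance under each $T^{(i)}(\xi)$ is automatic because $\mathcal{K} = \bigvee_{\mathbf{n}} \mathfrak{L}_{\mathbf{n}}^{I_k}(\mathcal{W})$ is the smallest joint invariant subspace containing $\mathcal{W}$. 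The real work is to prove $T^{(i)}(\xi)^* \mathcal{K} \subseteq \mathcal{K}$, equivalently that $\wt{T}^{(i)^*}$ maps $\mathcal{K}$ into $E_i \otimes \mathcal{K}$. I would test this on a generator $\wt{T}_{\mathbf{n}}(\xi \otimes w)$, $w \in \mathcal{W}$, and split on $n_i$: when $n_i \geq 1$, reorder $\wt{T}_{\mathbf{n}}$ by \eqref{rep} to bring the $i$-th creation operator outermost and apply $\wt{T}^{(i)^*}\wt{T}^{(i)} = I$, landing in $E_i \otimes \operatorname{ran} \wt{T}_{\mathbf{n}-e_i} \subseteq E_i \otimes \mathcal{K}$; when $n_i = 0$, push $\wt{T}^{(i)^*}$ through the remaining factors $\wt{T}^{(j)}$ ($j \neq i$) by iterating \eqref{doubly} until it acts directly on $w \in \mathcal{W} \subseteq \ker \wt{T}^{(i)^*}$, so the expression vanishes. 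Together with the invariance above, this shows $\mathcal{K}$ reduces each $(\sigma, T^{(i)})$ and is therefore reducing.

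I expect the main obstacle to be the $n_i = 0$ case of the converse: commuting $\wt{T}^{(i)^*}$ through the ordered product $\wt{T}_{\mathbf{n}}$ requires careful bookkeeping of the reordering unitaries $t_{ij}$ supplied by the product system, and one must check that these unitaries do not interfere with the final cancellation against $\ker \wt{T}^{(i)^*}$. This step is also conceptually central, since the containment $\mathcal{W} \subseteq \ker \wt{T}^{(i)^*}$ is exactly what forces the tail $\{n_i = 0\}$ into $\ker \wt{T}^{(i)^*}$ and thereby distinguishes a reducing subspace from a merely invariant one.
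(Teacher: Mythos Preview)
Your proposal is correct and follows essentially the same route as the paper: the forward direction is obtained from the wandering-subspace machinery (the paper cites Corollary \ref{main1}, which packages your use of Corollary \ref{Cor1} together with Remark \ref{RM1}(2)), and the converse is handled by the same case split on $n_i$, using the doubly commuting relation to push $\wt{T}^{(i)^*}$ past $\wt{T}_{\mathbf{n}}$ when $n_i=0$ and the isometry identity $\wt{T}^{(i)^*}\wt{T}^{(i)}=I$ when $n_i\geq 1$.
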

\begin{proof}
The necessary part follows from Corollary \ref{main1}.  Conversely, it is obvious that $\mathcal{K}$ is $(\sigma, T^{(1)}, \dots, T^{(k)})$-invariant. Let $i \in I_k$ be fixed and $\mathbf{n}=(n_1, \cdots, n_k) \in \mathbb{N}^k_0.$ If $n_i=0,$ then  $$\widetilde{T}^{(i)*}\widetilde{T}_{\mathbf{n}}=(I_{E_i} \otimes \widetilde{T}_{\mathbf{n}})(t_{\mathbf{n},i} \otimes I_{\mathcal{H}})(I_{\mathbb{E}(\mathbf{n})} \otimes \widetilde{T}^{(i)*}),$$
where $t_{\mathbf{n},i}: \mathbb{E}(\mathbf{n}) \otimes E_i \to E_i \otimes \mathbb{E}(\mathbf{n})$ is the isomorphism which is a composition of the isomorphism $t_{i,j}: E_i \otimes E_j \to E_j \otimes E_i.$ Since $\mathcal{W}= \bigcap_{j=1}^k \mbox{ker}\wt{T}^{(i)^*}|_{\mathcal{K}},$
 $ \widetilde{T}^{(i)^*}\mathcal{W}=\{0\}.$ Thus, it follows from the above equation we get $\widetilde{T}^{(i)*}\widetilde{T}_{\mathbf{n}}(\mathbb{E}(\mathbf{n}) \otimes \mathcal{W}) =\{0\},$ for $n \in \mathbb{N}^k_0$ with $n_i=0.$
Suppose that $n_i>0, $  $\widetilde{T}^{(i)*}\widetilde{T}_{\mathbf{n}}(\mathbb{E}(\mathbf{n}) \otimes \mathcal{W})=\widetilde{T}_{(n_i-1)}^{(i)}(I_{E_i^{\otimes (n_i-1)}} \otimes \widetilde{T}^{(i)}_{\mathbf{n}-n_i})(\mathbb{E}(\mathbf{n}) \otimes \mathcal{W}) \subset \mathcal{K}$ because $\mathcal{K}$ is $(\sigma, T^{(1)}, \dots, T^{(k)})$-invariant. Therefore $\widetilde{T}^{(i)*}\widetilde{T}_{\mathbf{n}}(E(\mathbf{n}) \otimes \mathcal{W}) \subset \mathcal{K}$ for all $\mathbf{n} \in \mathbb{N}^k_0$ and hence we get the desired result.
\end{proof}

\begin{remark}
Let $\pi$ be the representation of $\mathcal{M}$ on the Hilbert space $\mathcal{H}_2$ and let $(\rho, S^{(1)}, \dots , S^{(k)})$ be the  induced  representation on $\mathcal{H}_1:=\mathcal{F}(\mathbb{E}) \otimes \mathcal{H}_2$ induced by $\pi.$ Let $\mathcal{K}$ be the subspace of $\mathcal{H}_1$ such that $$\mathcal{K}=\bigoplus_{\mathbf{n} \in \mathbb{N}_0^k}\wt{S}_{\mathbf{n}}(\mathbb{E}(\mathbf{n}) \otimes \mathcal{W}),$$ for some closed subspace $\mathcal{W}$ of $\mathcal{H}_1.$ For $1 \leq i \leq k,$ we obtain
$$\mbox{ker}\wt{S}^{(i)^*}|_{\mathcal{K}}= \bigoplus_{\mathbf{n}\in \mathbb{N}^k_0,\: n_i=0}\mathbb{E}(\mathbf{n}) \otimes \mathcal{W} .$$
Thus $\bigcap_{i=1}^k\mbox{ker}\wt{S}^{(i)^*}|_{\mathcal{K}}=\mathcal{W}.$ Therefore, the sufficient part of Proposition \ref{BU2}, $\mathcal{W}=\bigcap_{i=1}^k \mbox{ker}\wt{T}^{(i)^*}|_{\mathcal{K}}$ is not necessary to prove $\mathcal{K}$ is $(\sigma, T^{(1)}, \dots , T^{(k)})$-reducing for the case of  induced representation $(\rho, S^{(1)}, \dots , S^{(k)}).$ \\
From Proposition \ref{BU2}, $\mathcal{W}$ is $\sigma$-invariant.  Let $(\rho, S^{(1)}, \dots , S^{(k)})$ be a induced representations on $\mathcal{H}_1:=\mathcal{F}(\mathbb{E}) \otimes \mathcal{W} $ induced by $\sigma|_{\mathcal{W}}.$ We define  the operator $A: \mathcal{F}(\mathbb{E}) \otimes \mathcal{W} \to \mathcal{K}$ by
\begin{align}\label{BU3}
A\left(\bigoplus_{\mathbf{n} \in \mathbb{N}_0^k} \xi_{\mathbf{n}} \otimes w_{\mathbf{n}}\right)=\sum_{\mathbf{n} \in \mathbb{N}_0^k}\wt{T}_{\mathbf{n}}(\xi_{\mathbf{n}} \otimes w_{\mathbf{n}}),
\end{align}
where $\xi_{\mathbf{n}} \in E(\mathbf{n}), w_{\mathbf{n}} \in \mathcal{W}.$  Then it is easy to see that $A$ is multi-Analytic  and hence $A=M_{\theta},$ where $\theta:=A|_{\mathcal{W}_{\mathcal{H}_1}}: \mathcal{W}_{\mathcal{H}_1} \to \mathcal{H}, \: \mathcal{W}_{\mathcal{H}_1}$ is generating wandering subspace for $(\rho, S^{(1)}, \dots , S^{(k)}),$ in fact $\mathcal{W}_{\mathcal{H}_1}=\mathcal{W}.$  From Equation (\ref{BU3}), $M_{\theta}$ is an isometry and $$\mathcal{K}=M_{\theta}\mathcal{H}_1.$$
\end{remark}
Next we explore the invariant subspaces for the isometric covariant representations $(\sigma, T^{(1)},$ $\dots,T^{(k)}),$ when $ k \geq 2.$

\begin{definition}
Let $(\sigma, T^{(1)}, \dots, T^{(k)})$  be a completely bounded covariant representation of $\mathbb{E}$ on the Hilbert space $\mathcal{H}.$ An  $(\sigma, T^{(1)}, \dots, T^{(k)})$- invariant subspace $\mathcal{K}$ is said to doubly commuting subspace if the covariant representation $(\sigma, T^{(1)}, \dots, T^{(k)})|_{\mathcal{K}}$ is  doubly commuting of $\mathbb{E}$ on the Hilbert space $\mathcal{K},$ that is,
$${\wt{T}|_{\mathcal{K}}^{(i)}}^{*}{\wt{T}|_{\mathcal{K}}^{(j)}}=(I_{E_i} \otimes {\wt{T}|_{\mathcal{K}}^{(j)}})(t_{j,i} \otimes I_{\mathcal{K}})(I_{E_i} \otimes{\wt{T}|_{\mathcal{K}}^{(i)}}^{*})$$ where $i,j \in \{1,2, \dots ,k\}$ with $ i \neq j.$
\end{definition}

Now we present a wandering subspace theorem concerning doubly commuting subspaces of $\mathcal{H}.$
\begin{theorem}\label{DCS1}
Let $(\sigma, T^{(1)}, \dots, T^{(k)})$ be a doubly commuting completely bounded, covariant representation of the product system $\mathbb{E}$ on a Hilbert space $\mathcal{H}$ such that  $(\sigma, T^{(1)}, \dots, $ $T^{(k)})$ is analytic  and satisfies one of the following properties:
\begin{enumerate}
\item[(1)] $(\sigma, T^{(i)})$ is concave for each $i=1, \dots ,k,$
\item[(2)]   $\|(I_{E_i} \otimes \wt{T}^{(i)})(\zeta)+\xi\|^2 \leq 2(\|\zeta\|^2+\|\wt{T}^{(i)}(\xi)\|^2),$ for each $\zeta \in E_i^{\otimes 2} \otimes \mathcal{H},\xi \in E_i \otimes \mathcal{H}, i= 1, \dots ,k.$
    \item[(3)]   $\|\wt{T}^{(i)}(\xi_i)\|^2+\|\wt{T}^{(i)*}_2\wt{T}^{(i)}(\xi_i)\|^2 \leq 2\|\wt{T}^{(i)*}\wt{T}^{(i)}(\xi_i )\|^2, \:  \xi_i \in E_i \otimes \mathcal{H}.$

\end{enumerate}
Let $\mathcal{K}$ be a $(\sigma, T^{(1)}, \dots, T^{(k)})$- doubly commuting subspace. Then  $$\mathcal{K}=\bigvee_{\mathbf{n} \in \mathbb{N}^k_0}T_{\mathbf{n}}(E(\mathbf{n}) \otimes \mathcal{W}'_{\mathcal{K}})$$ for some wandering subspace $\mathcal{W}'_{\mathcal{K}}$ for $(\sigma, T^{(1)}, \dots, $ $T^{(k)})|_{\mathcal{K}}.$
\end{theorem}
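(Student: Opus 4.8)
The plan is to treat the restriction $(\sigma,T^{(1)},\dots,T^{(k)})|_{\mathcal{K}}$ as a covariant representation of $\mathbb{E}$ on $\mathcal{K}$ in its own right and to apply Corollary \ref{Cor1} to it, taking the reducing subspace there to be all of $\mathcal{K}$. Concretely, if the restricted representation satisfies the hypotheses of Corollary \ref{Cor1} --- that is, it is doubly commuting, completely bounded, analytic, and satisfies one of the conditions (1)--(3) of Theorem \ref{MT2} --- then the corollary gives
\[
\mathcal{K}=\bigvee_{\mathbf{n}\in\mathbb{N}_0^k}\mathfrak{L}_{\mathbf{n}}^{I_k}(\mathcal{W}'_{\mathcal{K}}),\qquad \mathcal{W}'_{\mathcal{K}}=\bigcap_{i=1}^k\mbox{ker}\,(\wt{T}^{(i)}|_{\mathcal{K}})^{*},
\]
and since $\mathfrak{L}_{\mathbf{n}}^{I_k}(\mathcal{W}'_{\mathcal{K}})=\overline{T_{\mathbf{n}}(\mathbb{E}(\mathbf{n})\otimes\mathcal{W}'_{\mathcal{K}})}$, this is exactly the claimed wandering subspace decomposition. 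So the entire proof reduces to verifying that the restriction inherits those four structural properties.

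Two of them are immediate and one is routine. Double commutativity of $(\sigma,T^{(1)},\dots,T^{(k)})|_{\mathcal{K}}$ is precisely the defining property of $\mathcal{K}$ being a doubly commuting subspace, and complete boundedness is automatically preserved under restriction to an invariant subspace. Analyticity is inherited by the same argument that opens the proof of Corollary \ref{Cor1}: since $\mathcal{K}$ is invariant, each $T^{(i)}(\xi)$ restricts and hence $\mathfrak{L}_n^{i}(\mathcal{K})\subseteq\mathfrak{L}_n^{i}(\mathcal{H})$ for all $i,n$, so $\bigcap_n\mathfrak{L}_n^{i}(\mathcal{K})\subseteq\bigcap_n\mathfrak{L}_n^{i}(\mathcal{H})=\{0\}$, giving analyticity of $(\sigma,T^{(i)})|_{\mathcal{K}}$ for each $i$.

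The genuine content is to show the restriction still satisfies one of the concavity-type conditions (1)--(3), and here the three conditions behave very differently. Conditions (1) and (2) are phrased purely through $\wt{T}^{(i)}$ and $\wt{T}_2^{(i)}$ with \emph{no adjoints}; invariance of $\mathcal{K}$ makes $\wt{T}^{(i)}|_{\mathcal{K}}$ and $\wt{T}_2^{(i)}|_{\mathcal{K}}$ literal restrictions of the ambient maps, so both inequalities pass verbatim to all vectors supported on $\mathcal{K}$ and the restriction inherits (1), respectively (2). The main obstacle is condition (3): it involves $\wt{T}^{(i)*}$ and $\wt{T}_2^{(i)*}$, and the adjoint of a restriction to a merely \emph{invariant} (not reducing) subspace is the compression $P_{E_i\otimes\mathcal{K}}\,\wt{T}^{(i)*}|_{\mathcal{K}}$ rather than $\wt{T}^{(i)*}|_{\mathcal{K}}$; inserting these projections shrinks the right-hand side of (3), so the inequality does not transfer mechanically. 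To handle this I would fall back on the single-variable theory behind Theorem \ref{MT1}: if, as in Shimorin's setting, conditions (1)--(3) are interchangeable for representations close to isometric, then the hypothesis (3) may be replaced by the adjoint-free condition (2), which then restricts freely. Failing such an equivalence, one would instead have to reproduce (3) directly on $\mathcal{K}$ by using the doubly commuting subspace relations to control the compressed adjoints, which I expect to be the most delicate point of the argument. With one of (1)--(3) secured for the restriction, Corollary \ref{Cor1} applies and closes the proof as displayed above.
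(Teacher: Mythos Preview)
Your approach is precisely the paper's: check that the restricted representation $(\sigma,T^{(1)},\dots,T^{(k)})|_{\mathcal{K}}$ again satisfies the hypotheses and then invoke the main decomposition result (the paper cites Theorem~\ref{MT2} rather than Corollary~\ref{Cor1}, but in the analytic case these coincide). The paper's proof is two sentences long: it records that analyticity passes to the restriction (your argument) and that double commutativity holds by definition, and then applies Theorem~\ref{MT2}. It does \emph{not} discuss whether conditions (1)--(3) pass to the invariant subspace; that step is left implicit.

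On that point you are more scrupulous than the paper. Your observation that (1) and (2) are adjoint-free and therefore restrict verbatim is correct and complete. Your concern about (3) is also legitimate: the compressed adjoints shrink both sides of the inequality in ways that do not obviously preserve it. However, your proposed escape route---that (1)--(3) might be interchangeable in Shimorin's framework---is not available; they are genuinely distinct sufficient conditions, not equivalent ones. So under hypothesis (3) both your argument and the paper's share the same unaddressed step. For hypotheses (1) and (2) your proof is complete and coincides with the paper's.
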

\begin{proof}
Since $\mathcal{K}$ is $(\sigma, T^{(i)})$-invariant and $(\sigma, T^{(i)})$ is analytic for each $1 \leq i \leq k,$ then $(\sigma, T^{(i)})|_{\mathcal{K}}$ is analytic. Also $(\sigma, T^{(1)}, \dots, T^{(k)})|_{\mathcal{K}}$ is doubly commuting covariant representation on $\mathbb{E}$, therefore the desired result follows from Theorem \ref{MT2}.
\end{proof}
\begin{remark}
Let $(\sigma, T^{(1)}, \dots, T^{(k)})$  be a doubly commuting isometric  covariant representations of $\mathbb{E}$ on the Hilbert space $\mathcal{H}$  such that each $(\sigma, T^{(i)})$  is analytic. From the above Theorem \ref{DCS1}, $$\mathcal{K}=\bigoplus_{\mathbf{n} \in \mathbb{N}_0^k}\wt{T}_{\mathbf{n}}(E(\mathbf{n}) \otimes \mathcal{W})$$ for some wandering subspace $\mathcal{W}.$ Moreover,  the corresponding generating wandering subspace is $\mathcal{W}=\bigcap_{i=1}^k(\mathcal{K} \ominus \wt{T}^{(i)}(E_i \otimes \mathcal{K})).$
\end{remark}
The following theorem gives necessary and sufficient for doubly commuting  and generating wandering subspace for isometric covariant representations.

\begin{theorem}\label{MT5}
Let $(\sigma, T^{(1)}, \dots, T^{(k)})$  be a doubly commuting isometric  covariant representations of  $\mathbb{E}$ on a Hilbert space $\mathcal{H}$  such that each $(\sigma, T^{(i)})$  is analytic. Let $\mathcal{K}$ be a $(\sigma, T^{(1)}, \dots, T^{(k)})$-invariant. Then  $\mathcal{K}$ is  $(\sigma, T^{(1)}, \dots, T^{(k)})$- doubly commuting subspace if and only if there exist a doubly commuting isometric  covariant representations  $(\rho, V^{(1)}, \dots, V^{(k)})$ of $\mathbb{E}$ on the Hilbert space $\mathcal{H}_1$, $(\sigma, V^{(i)})$  is analytic and an inner operator $\theta:\mathcal{W}_{\mathcal{H}_1} \to \mathcal{H}$ such that
$$\mathcal{K}=M_{\theta}\mathcal{H}_1.$$ In particular, if $\mathcal{K}=\mathcal{H},$ then $\theta$ is outer.
\end{theorem}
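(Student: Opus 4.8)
The plan is to prove the two implications separately, reducing each to the structure theory already developed for doubly commuting analytic representations. For the forward implication, suppose $\mathcal{K}$ is a $(\sigma, T^{(1)},\dots,T^{(k)})$-doubly commuting subspace. Since each $(\sigma,T^{(i)})$ is analytic and $\mathcal{K}$ is $(\sigma,T^{(i)})$-invariant, each restriction $(\sigma,T^{(i)})|_{\mathcal{K}}$ is again analytic, and by hypothesis the tuple $(\sigma,T^{(1)},\dots,T^{(k)})|_{\mathcal{K}}$ is doubly commuting. I would first invoke Theorem \ref{DCS1} together with the Remark following it to obtain the orthogonal Wold-type decomposition
$$\mathcal{K}=\bigoplus_{\mathbf{n}\in\Nk}\wT_{\mathbf{n}}\big(\be(\mathbf{n})\ot\mathcal{W}\big),\qquad \mathcal{W}=\bigcap_{i=1}^{k}\mbox{ker}\,\wT^{(i)*}|_{\mathcal{K}},$$
where orthogonality of the summands is precisely Remark \ref{RM1}(2) and uses the double commutation in an essential way. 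The wandering subspace $\mathcal{W}$ is $\sigma(\mathcal{M})$-invariant, so I set $\sigma_1:=\sigma|_{\mathcal{W}}$ and take $(\rho,V^{(1)},\dots,V^{(k)})$ to be the induced representation on $\mathcal{H}_1:=\mathcal{F}(\be)\ot_{\sigma_1}\mathcal{W}$; by construction this is doubly commuting, isometric, and each $(\rho,V^{(i)})$ is analytic. Mimicking the construction in Corollary \ref{main1} and in the Remark after Proposition \ref{BU2}, I define $A:\mathcal{H}_1\to\mathcal{H}$ by $A(\bigoplus_{\mathbf{n}}\xi_{\mathbf{n}}\ot w_{\mathbf{n}})=\sum_{\mathbf{n}}\wT_{\mathbf{n}}(\xi_{\mathbf{n}}\ot w_{\mathbf{n}})$ and check that $A$ is multi-analytic, so $A=M_{\theta}$ with $\theta=A|_{\mathcal{W}_{\mathcal{H}_1}}$. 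The orthogonal decomposition above shows $M_{\theta}$ is an isometry with range $\mathcal{K}$, so $\theta$ is inner and $\mathcal{K}=M_{\theta}\mathcal{H}_1$; when $\mathcal{K}=\mathcal{H}$ this range condition reads $\overline{M_\theta\mathcal{H}_1}=\mathcal{H}$, that is, $\theta$ is outer.

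For the converse, suppose we are given a doubly commuting isometric representation $(\rho,V^{(1)},\dots,V^{(k)})$ on $\mathcal{H}_1$ with each $(\rho,V^{(i)})$ analytic, and an inner $\theta$ with $\mathcal{K}=M_{\theta}\mathcal{H}_1$. Since $\theta$ is inner, $M_{\theta}$ is an isometry, and being multi-analytic it satisfies $M_{\theta}V^{(i)}(\xi)=T^{(i)}(\xi)M_{\theta}$ and $M_\theta\rho(a)=\sigma(a)M_\theta$. Thus $M_{\theta}$ is a unitary of $\mathcal{H}_1$ onto $\mathcal{K}$ implementing an isomorphism between $(\rho,V^{(1)},\dots,V^{(k)})$ and the restriction $(\sigma,T^{(1)},\dots,T^{(k)})|_{\mathcal{K}}$. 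Passing to the associated bilinear maps, the intertwining becomes $M_{\theta}\wt{V}^{(i)}=\wT^{(i)}|_{\mathcal{K}}(I_{E_i}\ot M_{\theta})$, whence $\wT^{(i)}|_{\mathcal{K}}=M_{\theta}\wt{V}^{(i)}(I_{E_i}\ot M_{\theta}^{*})$. Substituting these expressions into the doubly commuting identity satisfied by the $\wt{V}^{(i)}$ and using that the flip $t_{i,j}$ is a correspondence-level map independent of the Hilbert space, I would verify that the $\wT^{(i)}|_{\mathcal{K}}$ satisfy the doubly commuting relation of Definition \ref{dcom}; hence $\mathcal{K}$ is a doubly commuting subspace.

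The routine parts are the multi-analyticity of $A$ and the bookkeeping of tensor legs. The main obstacle is twofold, and in both cases it is the double commutation that does the work. In the forward direction one must know that the summands $\wT_{\mathbf{n}}(\be(\mathbf{n})\ot\mathcal{W})$ are mutually orthogonal, so that $M_\theta$ is genuinely isometric rather than merely contractive; this is exactly where Remark \ref{RM1}(2) and the decomposition of Theorem \ref{DCS1} are indispensable. In the converse direction the delicate step is transporting the doubly commuting relation through $M_\theta$, carefully tracking the unitaries $t_{i,j}\ot I$ and the identifications $E_i\ot(E_j\ot\,\cdot\,)\cong(E_i\ot E_j)\ot\,\cdot\,$ so that the flip maps act on the correct legs.
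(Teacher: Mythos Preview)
Your proposal is correct and follows essentially the same route as the paper. For the forward implication you invoke Theorem~\ref{DCS1} with the following Remark to obtain the orthogonal decomposition $\mathcal{K}=\bigoplus_{\mathbf{n}}\wT_{\mathbf{n}}(\be(\mathbf{n})\ot\mathcal{W})$, while the paper cites Theorem~\ref{MT3} for the same conclusion; the construction of the induced representation on $\mathcal{F}(\be)\ot\mathcal{W}$ and of the multi-analytic isometry $M_{\theta}$ is then identical. For the converse you phrase the argument as transporting the doubly commuting relation along the unitary $M_{\theta}:\mathcal{H}_1\to\mathcal{K}$ via $\wT^{(i)}|_{\mathcal{K}}=M_{\theta}\wt{V}^{(i)}(I_{E_i}\ot M_{\theta}^{*})$, whereas the paper writes the same computation using $P_{\mathcal{K}}=M_{\theta}M_{\theta}^{*}$ and the identity $(I_{E_i}\ot M_{\theta})\wt{V}^{(i)^*}=(I_{E_i}\ot P_{\mathcal{K}})\wT^{(i)^*}$; these are the same calculation in different notation.
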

\begin{proof}
By Corollary \ref{MT3}  we have  $$\mathcal{K}=\bigoplus_{\mathbf{n} \in \mathbb{N}_0^k}\wt{T}_{\mathbf{n}}(E(\mathbf{n}) \otimes \mathcal{W}),$$
where $\mathcal{W}=\bigcap_{i=1}^k(\mathcal{K} \ominus \wt{T}^{(i)}(E_i \otimes \mathcal{K})).$ Since $\mathcal{W}$ is $\sigma(\mathcal{M})$-invariant, we define the representation $\rho$ of $\mathcal{M}$ on the Hilbert space $\mathcal{W}$ by  $\rho(a):=\sigma(a)|_{\mathcal{W}}, a \in \mathcal{M}.$ Consider the induced representation $(\rho,S^{(1)}, \dots ,S^{(k)})$  induced by $\rho,$ now we define the Hilbert space $\mathcal{H}_1$ by $\mathcal{H}_1=\mathcal{F}(\mathbb{E}) \otimes_{\rho} \mathcal{W}$ and the bounded operator $V:\mathcal{H}_1 \to \mathcal{H} $ by $$V\left(\sum_{\mathbf{n}\in \mathbb{N}^k_0}\xi_{\mathbf{n}} \otimes_{\rho} w_{\mathbf{n}} \right)=\sum_{\mathbf{n}\in \mathbb{N}^k_0}\wt{T}_{\mathbf{n}}(\xi_{\mathbf{n}} \otimes_{\rho} w_{\mathbf{n}}), \:\:\mbox{where}\: \sum_{\mathbf{n}\in \mathbb{N}^k_0}\xi_{\mathbf{n}} \otimes w_{\mathbf{n}} \in  \mathcal{F}(\mathbb{E}) \otimes_{\rho} \mathcal{W}.$$
Observe that
\begin{align*}
\sum_{\mathbf{n}\in \mathbb{N}^k_0}\|\xi_{\mathbf{n}} \otimes_{\rho} w_{\mathbf{n}}\|^2= &\sum_{\mathbf{n}\in \mathbb{N}^k_0} \langle w_{\mathbf{n}}, \rho(\langle \xi_{\mathbf{n}}, \xi_{\mathbf{n}} \rangle)w_{\mathbf{n}} \rangle=\sum_{\mathbf{n}\in \mathbb{N}^k_0} \langle w_{\mathbf{n}}, \sigma(\langle \xi_{\mathbf{n}}, \xi_{\mathbf{n}} \rangle)w_{\mathbf{n}} \rangle \\
=& \sum_{\mathbf{n}\in \mathbb{N}^k_0}\|\xi_{\mathbf{n}} \otimes w_{\mathbf{n}}\|^2=\sum_{\mathbf{n}\in \mathbb{N}^k_0}\|\wt{T}_{\mathbf{n}}(\xi_{\mathbf{n}} \otimes w_{\mathbf{n}})\|^2,
\end{align*}
 thus $V$ is an isometry. Moreover, for all $\xi_{\mathbf{n}} \in E(\mathbf{n}),\eta_{\mathbf{m}}\in E(\mathbf{m}), w \in \mathcal{W}, \mathbf{n},\mathbf{m} \in \mathbb{N}^k_0$ we have
 \begin{align*}
 VS_{\mathbf{n}}(\xi_{\mathbf{n}})(\eta_{\mathbf{m}} \otimes w)=&V(\xi_{\mathbf{n}}\otimes \eta_{\mathbf{m}} \otimes w)=\wt{T}_{\mathbf{n}+\mathbf{m}}(\xi_{\mathbf{n}}\otimes \eta_{\mathbf{m}} \otimes w) \\
 =&\wt{T}_{\mathbf{n}}(\xi_{\mathbf{n}} \otimes V(\eta_{\mathbf{m}} \otimes w))=T(\xi_{\mathbf{n}})V(\eta_{\mathbf{m}} \otimes w),
 \end{align*}
 that is, $VS_{\mathbf{n}}(\xi_{\mathbf{n}})=T_{\mathbf{n}}(\xi_{\mathbf{n}})V$ for all $\xi_{\mathbf{n}} \in E(\mathbf{n}), \:\mathbf{n} \in \mathbb{N}^k_0.$
Therefore $V=M_{\theta},$ $\theta: \mathcal{W}_{\mathcal{H}_1} \to \mathcal{H}$ is inner,  in fact $\mathcal{W}_{\mathcal{H}_1}=\mathcal{W}.$ it is easy to see that $$\mbox{ran}V=\mbox{ran}M_{\theta}=\mathcal{K},$$ that is, $M_{\theta}\mathcal{H}_1=\mathcal{K}.$
To prove the converse part, let  $\mathcal{K}=M_{\theta}\mathcal{H}_1$ be a $(\sigma, T^{(1)}, \dots, T^{(k)})$-invariant such that $\theta $ is inner.

 Then $$M_{\theta}M_{\theta}^*=P_{\mathcal{K}},$$ where $P_{\mathcal{K}}$ is orthogonal projection oh $\mathcal{H}$ onto $\mathcal{K}.$ Then for all $i \neq j,$ we have
 \begin{align*}
& (I_{E_i} \otimes \wt{T}|_{\mathcal{K}}^{(j)})(t_{j,i} \otimes I_{\mathcal{H}_1})(I_{E_j} \otimes  \wt{T}|_{\mathcal{K}}^{(i)^*})\\
 =&(I_{E_i} \otimes P_{\mathcal{K}})(I_{E_i} \otimes \wt{T}^{(j)})(t_{j,i} \otimes P_{\mathcal{K}})(I_{E_j} \otimes \wt{T}^{(i)^*})\\
 =&(I_{E_i} \otimes P_{\mathcal{K}})(I_{E_i} \otimes \wt{T}^{(j)}(I_{E_i} \otimes M_{\theta}))(t_{j,i} \otimes I_{\mathcal{H}_1})(I_{E_j} \otimes (\wt{T}^{(i)}(I_{E_i} \otimes M_{\theta}))^*)\\
 =& (I_{E_i} \otimes P_{\mathcal{K}})(I_{E_i} \otimes M_{\theta})(I_{E_i} \otimes \wt{V}^{(j)})(t_{j,i} \otimes I_{\mathcal{H}_1})(I_{E_j} \otimes \wt{V}^{(i)^*})(I_{E_j} \otimes M_{\theta}^*)\\
 =&(I_{E_i} \otimes P_{\mathcal{K}})(I_{E_i} \otimes M_{\theta})\wt{V}^{(i)^*}\wt{V}^{(j)}(I_{E_j} \otimes M_{\theta}^*)\\
 =&(I_{E_i} \otimes P_{\mathcal{K}})\wt{T}^{(i)^*}\wt{T}^{(j)}(I_{E_j} \otimes P_{\mathcal{K}})={\wt{T}|_{\mathcal{K}}^{(i)}}^{*}{\wt{T}|_{\mathcal{K}}^{(j)}},
 \end{align*}
 since $(I_{E_i} \otimes M_{\theta})\wt{V}^{(i)^*}=(I_{E_i} \otimes P_{\mathcal{K}})\wt{T}^{(i)^*}.$ Hence $(\sigma, T^{(1)}, \dots, T^{(k)})|_{\mathcal{K}}$ is doubly commuting covariant representations of $\mathbb{E}$ on $\mathcal{H}.$
 \end{proof}
%

 Let $\Lambda $ be a set $\{1, \dots, k\}(k \in \mathbb{N})$ and let $\mathcal{H}$ be a Hilbert space. Define the Hilbert space $\mathfrak{L}^2(\mathfrak{F}, \mathcal{H})$ as in Equation (\ref{INP}) with commuting indeterminates $X_{\lambda}(\lambda \in \Lambda).$ Similarly, define the $\Lambda$-orthogonal shift $\mathfrak{S}=\{S_{\lambda}\}_{\lambda \in \Lambda}$  as in Equation (\ref{INP1}) with commuting indeterminates $X_{\lambda}(\lambda \in \Lambda)$ on $\mathfrak{L}^2(\mathfrak{F}, \mathcal{H}).$
 It is easy to see that, the  operators $\{S_{\lambda}:1 \leq \lambda \leq k\}$ with commuting indeterminates  $X_{\lambda}$ is  doubly commuting isometry and  $\mathcal{H}$ is generating wandering subspace for  the $\Lambda$-orthogonal shift $\mathfrak{S}.$ This combined with the following corollary which characterize the doubly commuting subspace.

\begin{corollary}\label{PCr}
Let $T=(T_1, \dots, T_k)$ be an $q$-doubly commuting isometries on $\mathcal{H}$ such that  $T_i$ is analytic  for $1 \leq i \leq k.$ Let $\mathcal{K}$ be  an joint $T$-invariant subspace. Then $\mathcal{K}$ is doubly commuting subspace for $T$ if and only if  there exists a Hilbert subspace $\mathcal{W}$ of $\mathcal{H}$, a $\Lambda$-orthogonal shift $\{S_{\lambda}\}$ on $\mathfrak{L}^2(\mathfrak{F}, \mathcal{W})$ with commuting indeterminates $X_{\lambda}(\lambda \in \Lambda)$ and an inner operator $\theta: \mathcal{W} \to \mathcal{H}$ such that $\mathcal{K}=M_{\theta}\mathfrak{L}^2(\mathfrak{F}, \mathcal{W}).$
\end{corollary}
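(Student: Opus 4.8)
The plan is to deduce this statement from Theorem \ref{MT5} by passing to the scalar case $E_i=\mathbb{C}$ through the dictionary set up in Example \ref{Exa1}. First I would fix the phases $z_{ij}\in\mathbb{T}$ with $z_{ji}=\overline{z_{ij}}$ that encode the $q$-doubly commuting relations \eqref{qb} of $T=(T_1,\dots,T_k)$, and build the product system $\mathbb{E}$ over $\mathbb{N}_0^k$ with $E_i=\mathbb{C}$ and twists $t_{ij}$ exactly as in Example \ref{Exa1}. Via the bijection \eqref{qc}, the tuple $T$ corresponds to a covariant representation $(\sigma,T^{(1)},\dots,T^{(k)})$ of $\mathbb{E}$ on $\mathcal{H}$ with $\sigma(x)=xI_{\mathcal{H}}$ and $T^{(i)}(x)=xT_i$. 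Since $T$ is $q$-doubly commuting, this representation is doubly commuting and isometric, and the analyticity of each $T_i$ translates into analyticity of each $(\sigma,T^{(i)})$.

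Next I would check that the two notions of invariance and of ``doubly commuting subspace'' coincide under this dictionary. Because $\sigma$ is a multiple of the identity, every closed subspace is $\sigma(\mathcal{M})$-invariant, and $T^{(i)}(x)=xT_i$ shows that a closed subspace is $(\sigma,T^{(1)},\dots,T^{(k)})$-invariant precisely when it is joint $T$-invariant. Likewise, unwinding the definition of a doubly commuting subspace and using \eqref{qb}, the subspace $\mathcal{K}$ is a doubly commuting subspace for $T$ if and only if $(\sigma,T^{(1)},\dots,T^{(k)})|_{\mathcal{K}}$ is a doubly commuting covariant representation. With these translations in hand, Theorem \ref{MT5} applies directly and produces the wandering subspace $\mathcal{W}=\bigcap_{i=1}^k(\mathcal{K}\ominus T_i\mathcal{K})$, the induced representation $(\rho,S^{(1)},\dots,S^{(k)})$ on $\mathcal{H}_1=\mathcal{F}(\mathbb{E})\otimes_\rho\mathcal{W}$ induced by $\sigma|_{\mathcal{W}}$, and an inner operator $\theta:\mathcal{W}\to\mathcal{H}$ with $\mathcal{K}=M_\theta\mathcal{H}_1$.

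The remaining and most delicate step is to identify this induced representation with the $\Lambda$-orthogonal shift. Since $E_i=\mathbb{C}$, each fibre $\mathbb{E}(\mathbf{n})$ is one-dimensional, so $\mathcal{F}(\mathbb{E})\otimes_\rho\mathcal{W}=\bigoplus_{\mathbf{n}\in\mathbb{N}_0^k}\mathbb{E}(\mathbf{n})\otimes_\rho\mathcal{W}$ is naturally unitarily equivalent to $\bigoplus_{\mathbf{n}\in\mathbb{N}_0^k}\mathcal{W}$, which with commuting indeterminates is exactly $\mathfrak{L}^2(\mathfrak{F},\mathcal{W})$ of \eqref{INP}; under this equivalence the creation operators $S^{(i)}$ become the $\Lambda$-orthogonal shift $\{S_\lambda\}$ of \eqref{INP1}. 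The point that needs care is that the creation operators carry the reordering unitaries $t_{ij}$, i.e.\ the phases $z_{ij}$, through the identifications $\mathbb{E}(\mathbf{n})\otimes E_i\cong\mathbb{E}(\mathbf{n}+e_i)$, so one must verify that these phase factors are compatible with the doubly commuting shift structure recorded in the remark preceding the corollary. Once this identification is in place, $\mathcal{K}=M_\theta\mathfrak{L}^2(\mathfrak{F},\mathcal{W})$ follows at once, and the converse direction is immediate by reading the same identification backwards and invoking the converse half of Theorem \ref{MT5}.
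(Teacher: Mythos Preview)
Your approach is correct and matches the paper's: the paper gives no formal proof for this corollary, only the paragraph immediately preceding it which observes that the $\Lambda$-orthogonal shift with commuting indeterminates on $\mathfrak{L}^2(\mathfrak{F},\mathcal{W})$ is a doubly commuting isometric tuple with generating wandering subspace, so that the result follows from Theorem~\ref{MT5} specialized via the dictionary of Example~\ref{Exa1}. Your proposal spells out exactly this route, and your flagging of the phase factors $z_{ij}$ in the identification $\mathcal{F}(\mathbb{E})\otimes_\rho\mathcal{W}\cong\mathfrak{L}^2(\mathfrak{F},\mathcal{W})$ is a genuine subtlety that the paper does not address explicitly.
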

Let $\mathcal{H}_1$ and $\mathcal{H}_2$ be the Hilbert spaces and $B(\mathcal{H}_1, \mathcal{H}_2)$ be set of all bounded operator from $\mathcal{H}_1$ to $\mathcal{H}_2.$ Denote $H^{\infty}_{\mathcal{H}_1 \to \mathcal{H}_2}(\mathbb{D}^k)$ by  the set of all $B(\mathcal{H}_1, \mathcal{H}_2)$-valued bounded holomorphic functions on $\mathbb{D}^n$, that is,
$$H^{\infty}_{\mathcal{W}_1 \to \mathcal{W}}(\mathbb{D}^n)=\{f: \mathbb{D}^n \to B(\mathcal{H}_1, \mathcal{H}_2) \: \:\mbox{ is holomorphic }\: : \: \sup_{\mathbf{z} \in \mathbb{D}^k}\|f(\mathbf{z})\|_{B(\mathcal{H}_1, \mathcal{H}_2)} < \infty\}.$$ We call the operator valued function  $\Theta \in H^{\infty}_{\mathcal{W}_1 \to \mathcal{W}}(\mathbb{D}^n) $ is inner if $$\Theta(\mathbf{z})^*\Theta(\mathbf{z})=I_{\mathcal{H}_1}$$ for almost all $\mathbf{z} \in \mathbb{T}^k,$  where $\mathbb{T}$ is boundary of $\mathbb{D}.$
\begin{corollary}
Let $M_{\mathbf{z}}:=(M_{z_1}, \dots, M_{z_k})$ be a multiplication operators  on Hilbert  valued Hardy space $\mathcal{H}_{\mathcal{W}}^2(\mathbb{D}^k)$ and  $\mathcal{K}$ be a closed subspace of $\mathcal{H}_{\mathcal{W}}^2(\mathbb{D}^k).$ Then $\mathcal{K}$ is doubly commuting subspace  for  $M_{\mathbf{z}}$ if and only if there exists a closed subspace $\mathcal{W}_1$ of $\mathcal{W}$  and an inner function $\Theta \in H^{\infty}_{\mathcal{W}_1 \to \mathcal{W}}(\mathbb{D}^n)$ such that  $$\mathcal{K}=M_{\Theta}\mathcal{H}_{\mathcal{W}_1}^2(\mathbb{D}^k).$$
\end{corollary}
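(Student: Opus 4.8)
The plan is to obtain this corollary as the concrete Hardy-space incarnation of Theorem \ref{MT5}, equivalently of its tuple form Corollary \ref{PCr}, after identifying the abstract induced representation appearing there with the coordinate-shift tuple $M_{\mathbf{z}}$ and the abstract multi-analytic inner operator $M_{\theta}$ with multiplication by an operator-valued inner function $\Theta$. First I would record that $M_{\mathbf{z}}=(M_{z_1},\dots,M_{z_k})$ on $\mathcal{H}^2_{\mathcal{W}}(\mathbb{D}^k)$ is a commuting (hence $q$-doubly commuting with all $z_{ij}=1$) tuple of analytic isometries: each $M_{z_i}$ is an isometry, $\bigcap_{n}z_i^n\mathcal{H}^2_{\mathcal{W}}(\mathbb{D}^k)=\{0\}$ gives analyticity, and $M_{z_i}^*M_{z_j}=M_{z_j}M_{z_i}^*$ for $i\neq j$ gives the doubly commuting relation. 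Thus $M_{\mathbf{z}}$ satisfies the hypotheses of Theorem \ref{MT5} and Corollary \ref{PCr}, with generating wandering subspace exactly the space of constants $\mathcal{W}$, by Remark \ref{RM3}(2).

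Next, for a doubly commuting subspace $\mathcal{K}$, Corollary \ref{PCr} (or Theorem \ref{MT5}) produces a closed subspace $\mathcal{W}_1$, a $\Lambda$-orthogonal shift $\{S_\lambda\}$ on $\mathfrak{L}^2(\mathfrak{F},\mathcal{W}_1)$ with \emph{commuting} indeterminates, and an inner operator $\theta\colon\mathcal{W}_1\to\mathcal{H}^2_{\mathcal{W}}(\mathbb{D}^k)$ with $\mathcal{K}=M_{\theta}\,\mathfrak{L}^2(\mathfrak{F},\mathcal{W}_1)$. Since the indeterminates commute, $\mathfrak{L}^2(\mathfrak{F},\mathcal{W}_1)$ is just $\mathcal{H}^2_{\mathcal{W}_1}(\mathbb{D}^k)$ and $\{S_\lambda\}$ becomes the coordinate-shift tuple there; this is exactly the identification in the paragraph preceding the corollary, compatible with Theorem \ref{MT3} and Remark \ref{RM3}(2) taking all $z_{ij}=1$. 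Because $M_{\theta}$ is an isometry into $\mathcal{H}^2_{\mathcal{W}}(\mathbb{D}^k)$ restricting to $\theta$ on the constants, we have $\dim\mathcal{W}_1\leq\dim\mathcal{W}$, so after a unitary we may realize $\mathcal{W}_1$ as a closed subspace of $\mathcal{W}$.

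The crux is to rewrite $M_{\theta}$ as multiplication by a function in $H^{\infty}_{\mathcal{W}_1\to\mathcal{W}}(\mathbb{D}^k)$. I would use that $M_{\theta}$ intertwines the two coordinate-shift tuples (being multi-analytic): expanding $M_{\theta}w=\sum_{\mathbf{n}}\theta_{\mathbf{n}}\mathbf{z}^{\mathbf{n}}$ for $w\in\mathcal{W}_1$ and invoking $M_{\theta}M_{z_i}=M_{z_i}M_{\theta}$ shows that $M_{\theta}=M_{\Theta}$ is multiplication by the bounded holomorphic symbol $\Theta(\mathbf{z})=\sum_{\mathbf{n}\in\mathbb{N}^k_0}\theta_{\mathbf{n}}\mathbf{z}^{\mathbf{n}}\in H^{\infty}_{\mathcal{W}_1\to\mathcal{W}}(\mathbb{D}^k)$; this is the several-variable Beurling--Lax--Halmos identification of shift-intertwiners with analytic multipliers. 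The inner condition $M_{\theta}^*M_{\theta}=I$ then converts, through Fourier coefficients and radial boundary values, into $\Theta(\mathbf{z})^*\Theta(\mathbf{z})=I_{\mathcal{W}_1}$ for almost every $\mathbf{z}\in\mathbb{T}^k$, which is precisely the inner condition on $\Theta$. This yields $\mathcal{K}=M_{\Theta}\mathcal{H}^2_{\mathcal{W}_1}(\mathbb{D}^k)$. For the converse, if $\mathcal{K}=M_{\Theta}\mathcal{H}^2_{\mathcal{W}_1}(\mathbb{D}^k)$ with $\Theta$ inner, then $M_{\Theta}$ is an isometry intertwining the shift tuples, hence a multi-analytic inner operator $M_{\theta}$ with $\theta=M_{\Theta}|_{\mathcal{W}_1}$, and the converse direction of Theorem \ref{MT5} shows $\mathcal{K}$ is doubly commuting for $M_{\mathbf{z}}$.

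The main obstacle I expect is precisely the identification in the third paragraph: verifying rigorously that a bounded operator intertwining the two coordinate-shift tuples on the vector-valued Hardy spaces is genuinely multiplication by an $H^{\infty}$ operator-valued function, and that the abstract isometric (inner) condition on $M_{\theta}$ passes to the pointwise boundary identity $\Theta^*\Theta=I$ a.e.\ on $\mathbb{T}^k$. Everything else is a direct translation through Corollary \ref{PCr}, Theorem \ref{MT3} and Remark \ref{RM3}(2).
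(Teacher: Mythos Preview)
Your approach is essentially the same as the paper's: verify that $M_{\mathbf{z}}$ satisfies the hypotheses, apply Corollary~\ref{PCr} (equivalently Theorem~\ref{MT5}) to produce $\mathcal{W}_1$ and an inner multi-analytic $M_{\theta}$, identify $\mathfrak{L}^2(\mathfrak{F},\mathcal{W}_1)$ with $\mathcal{H}^2_{\mathcal{W}_1}(\mathbb{D}^k)$ via a unitary intertwining the shifts, and then recognize the resulting shift-intertwining isometry as multiplication by an inner $\Theta\in H^{\infty}_{\mathcal{W}_1\to\mathcal{W}}(\mathbb{D}^k)$; the converse runs through Theorem~\ref{MT5}. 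The one substantive difference is that for the step you flag as the ``main obstacle'' --- passing from a bounded shift-intertwiner to a multiplier by an $H^{\infty}$ symbol, and from $M_{\theta}$ isometric to $\Theta$ inner --- the paper does not argue directly via Fourier coefficients and boundary values but simply invokes the commutant lifting/multiplier result of Ball--Li--Timotin--Trent \cite{BLTT} for the polydisc, which delivers $V=M_{\Theta}$ with $\Theta$ inner in one stroke.
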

\begin{proof}
Since $M_{\mathbf{z}}$ is doubly commuting isometries on  $\mathcal{H}_{\mathcal{W}}^2(\mathbb{D}^k)$ and  $M_{z_i}$ is analytic for $1 \leq i \leq k.$ Then by Corollary \ref{PCr}, there exists a closed subspace $\mathcal{W}_1$ of $\mathcal{W}$, a  orthogonal shift $\{S_i: 1 \leq i \leq k\}$ on $\mathfrak{L}^2(\mathfrak{F}, \mathcal{W}_1)$ with commuting indeterminates $X_i(\lambda \in \Lambda)$ and an inner operator $\theta: \mathcal{W} \to \mathcal{H}$ such that $\mathcal{K}=M_{\theta}\mathfrak{L}^2(\mathfrak{F}, \mathcal{W}).$ Since $\{S_i: 1 \leq i \leq k\}$ is commuting operator, define the unitary operator $$U_1:\mathcal{H}_{\mathcal{W}_1}^2(\mathbb{D}^k) \to \mathfrak{L}^2(\mathfrak{F}, \mathcal{W}_1)$$ by $U_1(\sum_{\mathbf{n} \in \mathbb{N}_0^k}\mathbf{z}^nh_{\mathbf{n}})=\sum_{\mathbf{n} \in \mathbb{N}_0^k}S^{\mathbf{n}}h_{\mathbf{n}}$ for all $h_{\mathbf{n}} \in \mathcal{W}_1.$ It follows that $U_1M_{z_i}=S_iU_1$ for all $1 \leq i \leq k.$ Now, we define the isometry $V: \mathcal{H}_{\mathcal{W}_1}^2(\mathbb{D}^k) \to \mathcal{H}_{\mathcal{W}}^2(\mathbb{D}^k)$ by $V=M_{\theta}U_1.$ By the definition of multi-analytic operator $M_{\theta},$ we have  $VM_{z_i}=M_{\theta}S_iU_1=M_{z_i}M_{\theta}U_1=M_{z_i}V.$ Hence the isometry $V$ is an bimodule map and it follows from \cite{BLTT}, $V=M_{\Theta}$ for some inner function $\Theta \in \mathcal{H}_{\mathcal{W}_1}^2(\mathbb{D}^k).$ Also, $M_{\Theta}\mathcal{H}_{\mathcal{W}}^2(\mathbb{D}^k)=M_{\theta}U_1\mathcal{H}_{\mathcal{W}}^2(\mathbb{D}^k)=M_{\theta}\mathfrak{L}^2(\mathfrak{F}, \mathcal{W})=\mathcal{K}.$ Converse part follows from the proof of the converse of Theorem \ref{MT5}.
\end{proof}

\subsection*{Acknowledgment}
Shankar V. is grateful to The LNM Institute of Information Technology for providing research facility and warm hospitality during a visit in December 2019. Shankar V. is supported by CSIR Fellowship (File No: 09/115(0782)/2017-EMR-I).

\end{document}